\renewcommand*\subjclass[2][2000]{%
  \def\@subjclass{#2}%
  \@ifundefined{subjclassname@#1}{%
    \ClassWarning{\@classname}{Unknown edition (#1) of Mathematics
      Subject Classification; using '1991'.}%
  }{%
    \@xp\let\@xp\subjclassname\csname subjclassname@#1\endcsname
  }%
}
\newtheorem{theorem}{Theorem}[section]
\newtheorem{lemma}[theorem]{Lemma}
\newtheorem*{lemma*}{Lemma}
\newtheorem{proposition}[theorem]{Proposition}
\def\1ton{1,2,\ldots,n}
\def\det{{\rm det}}
\newcommand{\bydef}{\stackrel{{\rm def}}{=\!\!=}}
\newcommand{\onto}{\xrightarrow[]{{}_{\!\!\textnormal{onto}\!\!}}}
\newcommand{\into}{\xrightarrow[]{{}_{\!\!\textnormal{into}\!\!}}}
\newcommand{\A}{\mathbb{A}}
\newcommand{\R}{\mathbb{R}}
\newcommand{\B}{\mathbb{B}}
\newcommand{\W}{\mathscr{W}}
\theoremstyle{definition}
\newtheorem{conjecture}[theorem]{Conjecture}
\theoremstyle{remark}
\newtheorem{remark}[theorem]{Remark}
\numberwithin{equation}{section}
\newcommand{\abs}[1]{\lvert#1\rvert}
 \DeclareMathOperator{\re}{Re}
\DeclareMathOperator{\im}{Im} 
\DeclareMathOperator{\Mod}{Mod}
\def\XXint#1#2#3{{\setbox0=\hbox{$#1{#2#3}{\int}$}
\vcenter{\hbox{$#2#3$}}\kern-.5\wd0}}
\def\ge{\geqslant}
\begin{document}

\title[Hyperelastic deformations and total combined energy]{Hyperelastic deformations and total combined energy of mappings between annuli} \subjclass{Primary 31A05;
Secondary 42B30 }

%\date{11 October, 2005}

\keywords{Nitsche phenomena,  Annuli, ODE}
\author{David Kalaj}
\address{University of Montenegro, Faculty of Natural Sciences and
Mathematics, Cetinjski put b.b. 81000 Podgorica, Montenegro}
\email{davidk@ac.me}

\begin{abstract}

We consider the so called combined energy of a deformation between two concentric annuli and minimize it, provided that it keep order of the boundaries. It is an extension of the corresponding result of Euclidean energy. It is intrigue that, the minimizers are certain radial mappings and they exists if and only if the annulus on the image domain is not too thin, provided that the original annulus is fixed. This in turn implies a Nitsche type phenomenon. Next we consider the combined distortion and obtain certain related results which are dual to the results for combined energy, which also involve some Nitche type phenomenon.

{The main part of the paper is concerned with the total combined energy, a certain integral operator, defined as a convex linear combination of the combined energy and combined distortion, of diffeomorphisms between two concentric annuli $\A(1,r)$ and $\B(1,R)$. First we construct radial minimizers of total combined energy, then we prove that those radial minimizers are absolute minimizers on the class of all mappings between the annuli under certain constraint. This  extends the main result obtained by Iwaniec and Onninen in \cite{arma}.}

\end{abstract}  \maketitle

%\tableofcontents

\section{Introduction}

Let $\A=\{z: r\le |z|\le R\}$ and $\B=\{w: R\le |w|\le R_\ast\}$ be rounded annuli in the complex plane $\mathbf{C}$.
We shall work with the homotopy class $\mathcal{F}(\A, \B)$ that consists of all orientation preserving
homeomorphisms $h : \A \onto \B$ between annuli, which keep the boundary
circles in the same order that belongs to the Sobolev class $\mathcal{W}^{1,1}$ together with its inverse which is usually denoted in this paper by $f:\B\to \A$. If $h\in \mathcal{F}(\A,\B)$, then $h_\circ(z)=\frac{1}{R} h\left(\frac{z}{r}\right)$ is an orientation preserving homeomorphism between annuli
$\A(1, R/r)$ and $\B(1, R_\ast/R)$. This is why without loss of generality, we will consider in this paper the following specific annuli $\A=\A(1,r)$ and $\B=\B(1,R)$.

The general law of hyperelasticity
asserts that there exists an energy integral
\begin{equation}\label{energyint}\mathcal{E} [h] = \int_{\A}E(z, h(z), Dh(z)) dxdy, \ \ z=x+iy\end{equation}
such that the elastic deformations have the smallest energy for $h\in \mathcal{F}(\A, \B)$. Here $E(z, h, Dh)$ is a function that is conformally coerced and polyconvex. Some additional regularity conditions
are also imposed. Under those conditions Onninen and Iwaniec in \cite[Theorem~1.2]{arma} have  established the existence and global
invertibility of the minimizers. In particular, those conditions are satisfied for the following list of functions $L$:
\begin{enumerate}
  \item $L_1(z,h,Dh)=|Dh(z)|^2$,
  \item $L_2(z,h,Dh)=\frac{|Dh(z)|^2}{\det\left[Dh\right](z) }$,
  \item $L_3(z,h,Dh)=\alpha|Dh(z)|^2+\beta \frac{|Dh(z)|^2}{\det\left[Dh\right](z) }$, $\alpha,\beta>0$.
  \end{enumerate}
 Here $$|A|^2
=\frac{1}{2}\mathrm{Tr}(A^TA)$$ is the square of mean
Hilbert-Schmidt norm of a matrix $A$.

 The case (1) refers to the conformal energy, the case (2) to the distortion and the case (3) refers to the so called total energy. First two cases have  been treated in the paper \cite{astala}. The same problem for non-Euclidean metrics has been treated in \cite{klondon}. The third case has been treated in detail in \cite{arma}. The several dimensional generalization of problem (1) for so-called $n-$harmonic mappings has been treated in detail in \cite{memoirs}, and rediscovered for radial metrics in \cite{arxivk}. Further $n-$dimensional version of the problem (3) but for radial mappings has been solved in \cite{kc}. It should be mentioned the fact that all those problems, have their root to the famous J. C. C. Nitsche conjecture  \cite{Nitsche}, for harmonic mappings, which have been solved by Iwaniec, Kovalev and Onninen  in \cite{nconj} after a number of subtle approaches in \cite{W}, \cite{L} and \cite{Ka}. All the mentioned papers and results
deal with properties of mappings between circular annuli (in the complex
plane or in Euclidean space). For a similar problem but for non-circular annuli
we refer to the paper \cite{invent} and its generalization in \cite{calculus}. For a related approach to the Riemann surface setting we refer to the paper \cite{isrg}.

  Polar coordinates
$z = x+iy=te^{i\theta}$, $1 < t < r$ and $0 \le \theta \le 2\pi $
are best suitable  for dealing with mappings of planar annuli. The radial (normal) and
angular (tangential) derivatives of $h : \A \to \B$ are defined by
$$h_N (z) = \frac{\partial h(te^{i\theta} )}{\partial t}, t = |z|$$
and
$$h_T (z)=\frac{1}{t}\frac{\partial h(te^{i\theta} )}{\partial \theta}, t = |z|$$
For a general map $h \in  \mathcal{F}(\A, \B)$ we have the formulas
\begin{equation}\label{dnt}|Dh|^2 = |h_N|^2 + |h_T|^2\end{equation} and \begin{equation}\label{jac}\det\left[Dh\right](z) = \im (h_T \overline{h_N} ) \le  |h_T | |h_N |.\end{equation}

If $h(z)=\rho(z)e^{i\Theta(z)}$, and $\nabla \rho$ and $\nabla \Theta$ are corresponding gradients then we have $$|Dh|^2=|\nabla\rho|^2+\rho^2|\nabla \Theta|^2$$

Let $\mathbf{a},\mathbf{b}>0$, $f\in\mathcal{F}(\A,\B)$ and $g\in \mathcal{F}(\B,\A)$ . In this paper we consider the following new concepts (and new functions): the combined energy ($L_4$ and $L'_5$), the combined distortion ($L_5$ and $L_4'$) and the total combined energy ($L_6$ and $L_6'$).
In this contexts, for $\mathbf{a}>0$ and $\mathbf{b}>0$ we define the squared norms of $Dh$ by $$\mathcal{D}[\mathbf{a},\mathbf{b}][h]=\mathbf{a}^2|h_N|^2+\mathbf{b}^2|h_T|^2.$$
and
$$\mathcal{D}'[\mathbf{a},\mathbf{b}][h]=\mathbf{a}^2\rho^2|\nabla\Theta|^2+\mathbf{b}^2|\nabla \rho|^2,$$ where $h=\rho e^{i\Theta}$. It should be noticed that, when $\mathbf{a}=\mathbf{b}=1$ then $\mathcal{D}[\mathbf{a},\mathbf{b}][h]=\mathcal{D}'[\mathbf{a},\mathbf{b}][h]$ and they coincides with $|Dh|^2$.
Then we consider the following list of functions
\begin{enumerate}
  \item[(4)] $L_4(z,h,Dh)=D[\mathbf{a},\mathbf{b}][h]$,
  \item[(5)] $L_5(z,h,Dh)=\frac{D[\mathbf{a},\mathbf{b}][h]}{\det\left[Dh\right](z) }$,
  \item[(6)] $L_6(z,h,Dh)=\alpha D[\mathbf{a},\mathbf{b}][h]+\beta \frac{D[\mathbf{a},\mathbf{b}][h]}{\det\left[Dh\right](z) }$, $\alpha,\beta>0$,
  \item[($4'$)] $L'_5(w,g,Dg)=\frac{D'[\mathbf{a},\mathbf{b}][g]}{\det\left[Dg\right](w) }$,
  \item[($5'$)] $L'_4(w,g,Dg)=D'[\mathbf{a},\mathbf{b}][g]$,
  \item[($6'$)] $L'_6(w,g,Dg)=\beta D'[\mathbf{a},\mathbf{b}][g]+\alpha \frac{D'[\mathbf{a},\mathbf{b}][g]}{\det\left[Dg\right](w) }$, $\alpha,\beta>0$.
  \end{enumerate}

All those functions are conformally coerced and polyconvex (a concept invented by Ball in \cite{ball}), and thus they satisfy the conditions of \cite[Theorem~1.2]{arma}. The mapping $h_\circ(z)=|z|^{\log R/\log r} e^{i\theta}$ has finite total combined energy and maps $\A$ onto $\B$. From \cite[Theorem~1.2]{arma} we infer that there exists a deformation $h_\diamond$ in the same homotopy class as $h_\circ$, having smallest energy defined in \eqref{energyint}.

Here we are concerned with the construction of $h_\diamond$ for $L_4,L_5,L_6$ and their related functions $L_4'$, $L_5'$ and $L_6'$.
Before we go further let us state the following proposition which follows from related change rule results obtained in \cite{heko} and Lemma~\ref{bel}  below:

\begin{proposition}
For $i=4,5,5$ and $h\in \mathcal{F}(\A,\B)$ we have
\begin{equation}\label{dri}\int_{\A}L_i(z,h,Dh)dz=\int_{\B}L'_i(w,g,Dg)dw,\end{equation} where $g=h^{-1}$.
\end{proposition}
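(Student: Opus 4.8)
The plan is to reduce \eqref{dri} to a single pointwise ``change rule'' relating the derivatives of $h$ to those of its inverse $g=h^{-1}$, and then feed it into the change of variables formula; the case $i=6$ will then follow by linearity. Concretely, since $L_6=\alpha L_4+\beta L_5$ while $L'_6=\beta L'_4+\alpha L'_5$, it is enough to establish the two building block identities, pairing combined energy with combined energy and combined distortion with combined distortion, namely $\int_{\A}L_4\,dz=\int_{\B}L'_5\,dw$ and $\int_{\A}L_5\,dz=\int_{\B}L'_4\,dw$; adding $\alpha$ times the first to $\beta$ times the second then yields \eqref{dri} for $i=6$, the interchange of $\alpha$ and $\beta$ in $L'_6$ being precisely what this cross-pairing produces.

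First I would record the underlying linear algebra. At a point $z=te^{i\theta}\in\A$ where $h$ is differentiable with $\det Dh>0$, express both $Dh$ and $Dg=(Dh)^{-1}$ in the orthonormal radial/angular frames $\{e^{i\theta},ie^{i\theta}\}$ at $z$ and $\{e^{i\Theta},ie^{i\Theta}\}$ at $w=h(z)=\rho e^{i\Theta}$. The two columns of the matrix of $Dh$ are then $h_N$ and $h_T$, so by \eqref{dnt} and \eqref{jac} its column norms and determinant are $|h_N|$, $|h_T|$ and $\det Dh$. Inverting this $2\times2$ matrix and reading off the squared norms of the two rows of $(Dh)^{-1}$, which for $g=\rho e^{i\Theta}$ are exactly $|\nabla\rho|^2$ and $\rho^2|\nabla\Theta|^2$, delivers the change rule of Lemma~\ref{bel}:
\begin{equation*}|\nabla\rho|^2=\frac{|h_T|^2}{(\det Dh)^2},\qquad \rho^2|\nabla\Theta|^2=\frac{|h_N|^2}{(\det Dh)^2},\end{equation*}
all quantities taken at the corresponding points $z$ and $w=h(z)$. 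Summing with weights $\mathbf{a}^2,\mathbf{b}^2$ gives the single relation $\mathcal{D}'[\mathbf{a},\mathbf{b}][g](w)=\mathcal{D}[\mathbf{a},\mathbf{b}][h](z)/(\det Dh(z))^2$.

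It then remains to substitute into the change of variables formula. For orientation preserving $h$ one has $\det Dg(w)=1/\det Dh(g(w))$ almost everywhere and $\int_{\A}F\,dz=\int_{\B}F(g(w))\,\det Dg(w)\,dw$. Taking $F=\mathcal{D}[\mathbf{a},\mathbf{b}][h]$ and using the change rule to replace $\mathcal{D}[\mathbf{a},\mathbf{b}][h](g(w))$ by $(\det Dh(g(w)))^2\,\mathcal{D}'[\mathbf{a},\mathbf{b}][g](w)$ leaves a single surviving Jacobian and produces $\int_{\A}L_4\,dz=\int_{\B}\mathcal{D}'[\mathbf{a},\mathbf{b}][g]/\det Dg\,dw=\int_{\B}L'_5\,dw$; taking instead $F=\mathcal{D}[\mathbf{a},\mathbf{b}][h]/\det Dh$ cancels the Jacobians entirely and gives $\int_{\A}L_5\,dz=\int_{\B}\mathcal{D}'[\mathbf{a},\mathbf{b}][g]\,dw=\int_{\B}L'_4\,dw$. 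The only genuinely delicate point---and the reason \cite{heko} must be invoked---is the analytic legitimacy of these manipulations for Sobolev homeomorphisms rather than smooth diffeomorphisms: one needs $h$ differentiable almost everywhere with $Dg(h(z))=(Dh(z))^{-1}$ and $\det Dh>0$ almost everywhere, together with the area formula holding with no lower order defect, so that the pointwise identity integrates faithfully and both sides of \eqref{dri} are simultaneously finite or infinite. The algebra is elementary; the measure theoretic bookkeeping for $\mathcal{W}^{1,1}$ maps is where the real care lies.
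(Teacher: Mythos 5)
Your proof is correct and takes essentially the same route as the paper, which gives no standalone argument for this proposition but defers to the change-of-variables results of \cite{heko} and to Lemma~\ref{bel}, whose proof is precisely your pointwise inversion of $Dh$ in the radial/angular frames (there carried out by differentiating $g(\rho e^{i\Theta})=te^{i\theta}$ and solving the resulting linear system) followed by the substitution $w=h(z)$. Your cross-paired building blocks $\int_{\A}L_4\,dz=\int_{\B}L'_5\,dw$ and $\int_{\A}L_5\,dz=\int_{\B}L'_4\,dw$ are indeed the true content of the statement --- the paper's own list places the function named $L'_5$ under item $(4')$ and $L'_4$ under $(5')$ --- and your observation that this cross-pairing is exactly what forces the $\alpha\leftrightarrow\beta$ interchange in $L'_6$ correctly disposes of the case $i=6$.
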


The key tools in obtaining an extremal deformation
$h : \A \onto \B$, regardless of its boundary values, are the free Lagrangians. Finding
suitable free Lagrangians and using is
a challenging and intrigue issue in this paper. We have done it here for the so-called total combined harmonic energy
and a pair of annuli in the plane. In fact this challenging problem illustrates rather
clearly the strength of the concept of free Lagrangians.

In this context a free Lagrangian
refers to a differential $2$-form $$L(z, h, Dh) dz\bydef L(z, h, Dh) dx\wedge dy$$ whose integral over $\A$ does not depend
on a particular choice of the mapping $h \in \mathcal{F}(\A, \B)$.

Together with this introduction, the paper contains four more sections. In the third section we define the combined energy and find the radial extremals. Then we optimize the energy under the so-called J.C.C. Nitsche condition (Theorem~\ref{l4}). In the fourth section, we define the combined distortion and calculate the radial extremals. Then we prove the corresponding theorem that says that radial minimizers are those who minimize the combined distortion, provided that the annuli satisfy J.C.C. Nitsche condition (Theorem~\ref{delcomb}).

In the last section we prove the main results of the paper. First we define the total combined energy, then we obtain radial extremals. It is important to say that in this case we do not have any constraint on annuli. We finish this section by proving the main result of the paper which roughly speaking says that, if the ratio between the "combined" constants $\mathbf{a}$ and $\mathbf{b}$ is close to $1$, then the radial minimizers of total combined energy are absolute minimizers of total combined energy (Theorem~\ref{comtotal}).

 In order to formulate a corollary of our main result Theorem~\ref{comtotal} and of Proposition~\ref{propo} let  $$\mathfrak{E}_{\mathbf{c}}[h]\bydef\int_{\A}L_6(z,h,Dh)dz, \text{  where  } \mathbf{c}=\frac{\mathbf{a}}{\mathbf{b}}.$$
\begin{theorem}\label{comtotal}
For fixed pair of annuli $\A$ and $\B$, let $\mathbf{m}=\frac{\Mod(\A)}{\Mod(\B)}$. Then there is a constant $\varepsilon=\varepsilon(\mathbf{m})$ which is greater than $0$ for $\mathbf{m}\neq 1$ so that if $0\le |\mathbf{c}-1|\le\varepsilon$, then the total combined energy integral $\mathfrak{E}_{\mathbf{c}}: \mathcal{F}(\A,\B)\to \mathbf{R}$,  attains its minimum  for a radial mapping $h_\circ$.  The minimizer is unique up to a rotation.
\end{theorem}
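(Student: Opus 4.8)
The plan is to prove Theorem~\ref{comtotal} by the method of free Lagrangians, realizing it as a perturbation of the isotropic case $\mathbf{c}=1$, which is the total-energy result of Iwaniec and Onninen in \cite{arma}. Writing $h=\rho e^{i\Theta}$ in polar coordinates $z=te^{i\theta}$, one has $|h_N|^2=\rho_t^2+\rho^2\Theta_t^2$, $|h_T|^2=t^{-2}(\rho_\theta^2+\rho^2\Theta_\theta^2)$ and $\det[Dh]=\rho t^{-1}(\rho_t\Theta_\theta-\rho_\theta\Theta_t)$. Substituting the radial ansatz $\rho=\rho(t)$, $\Theta=\theta$ reduces $\mathfrak{E}_{\mathbf{c}}$ to a one-dimensional functional whose Euler--Lagrange equation produces the radial extremal $h_\circ=\rho_\circ(t)e^{i\theta}$; this is the extremal constructed earlier in the section, with $\rho_\circ$ a strictly increasing solution satisfying $\rho_\circ(1)=1$, $\rho_\circ(r)=R$ and depending smoothly on $\mathbf{c}$.

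First I would assemble the free Lagrangians adapted to the class $\mathcal{F}(\A,\B)$. Since every competitor winds once in the angular variable and sends each boundary circle to a boundary circle, for any function $\phi$, any function $A$, and any radial weight $g$ the densities $\phi(t)$, $A(t)\Theta_\theta$ and $g(\rho)\,\rho(\rho_t\Theta_\theta-\rho_\theta\Theta_t)=g(\rho)\det[Dh]\,t$ integrate over $\A$ to quantities determined solely by $r$, $R$ and the boundary data, hence independent of $h$. The aim is then to select $\phi$, $A$ and $g$, dictated by $\rho_\circ$, so that the sum $\omega=\phi(t)+A(t)\Theta_\theta+g(\rho)\rho(\rho_t\Theta_\theta-\rho_\theta\Theta_t)$ is a pointwise lower bound for the integrand $L_6(z,h,Dh)\,t$, with equality exactly at the radial data $(\rho_t,\rho_\theta,\Theta_t,\Theta_\theta)=(\rho_\circ',0,0,1)$. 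Integrating such an inequality then yields $\mathfrak{E}_{\mathbf{c}}[h]\ge\int_\A\omega=\mathfrak{E}_{\mathbf{c}}[h_\circ]$ for every $h\in\mathcal{F}(\A,\B)$.

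The heart of the matter is this pointwise calibration inequality. The coefficients $\phi,A,g$ are forced by tangency: their first-order matching at the radial data is precisely the Euler--Lagrange equation satisfied by $\rho_\circ$, so the genuine content is the second-order statement $L_6\,t-\omega\ge0$ globally in the gradient variables. The off-diagonal entries $\mathbf{a}^2\rho^2\Theta_t^2$ and $\mathbf{b}^2t^{-2}\rho_\theta^2$ in $\mathcal{D}$ are nonnegative and vanish at the radial data, so the difficulty concentrates on the coupling between $\rho_t$ and $\Theta_\theta$ inside both $\mathcal{D}$ and the distortion term $\mathcal{D}/\det[Dh]$. I expect this to be the main obstacle: for $\mathbf{a}=\mathbf{b}$ the quotient $|Dh|^2/\det[Dh]$ is convex in exactly the directions that matter, giving the Iwaniec--Onninen bound, whereas for $\mathbf{a}\neq\mathbf{b}$ the anisotropy tilts the relevant Hessian and the inequality can fail, so it survives only when $\mathbf{c}=\mathbf{a}/\mathbf{b}$ is close to $1$.

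Finally I would close the argument by continuity to extract $\varepsilon$. At $\mathbf{c}=1$ the calibration holds with a strictly positive margin, quantified by the strict convexity of the reduced integrand transverse to the radial extremal; since $\rho_\circ$, and hence all of $\phi,A,g$, depend continuously on $\mathbf{c}$, this margin survives a neighborhood $|\mathbf{c}-1|\le\varepsilon(\mathbf{m})$. The dependence on $\mathbf{m}=\Mod(\A)/\Mod(\B)$, and the restriction to $\mathbf{m}\neq1$, reflects that this margin collapses as $\mathbf{m}\to1$: there $r=R$, the conformal identity is admissible and is the minimizer at $\mathbf{c}=1$, so the second variation degenerates and no anisotropy can be absorbed. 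Uniqueness then follows by chasing the equality cases: equality forces the nonnegative off-diagonal terms to vanish, i.e. $\Theta_t=\rho_\theta=0$, and strict convexity forces $\Theta_\theta=1$ and $\rho=\rho_\circ$, so that $h$ coincides with $h_\circ$ up to a rotation.
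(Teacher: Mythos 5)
Your overall strategy --- calibrate the integrand from below by a sum of free Lagrangians tangent to it along the radial extremal, and treat $\mathbf{c}\neq 1$ as a perturbation of the Iwaniec--Onninen case --- is indeed the philosophy of the paper's proof. But as written the proposal has two genuine gaps. First, the family of free Lagrangians you allow, $\phi(t)+A(t)\Theta_\theta+g(\rho)\det[Dh]\,t$, is too small to carry the calibration. Already in the isotropic case the subgradient inequality needs a radial term $X(s)\,|h|_N/t$ with coefficient depending on $s=|h(z)|$, and in the anisotropic concavity case the coefficient of $|h|_N$ that the tangency conditions force is a genuine two--variable function $\Gamma(t,s)$ (with $\alpha\dot H(t)+\beta F(s)/s$ as a factor), which is not a free Lagrangian of any of the three types you list. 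The paper has to bound this term by the exact differential $\partial_t\mathcal{A}(|z|,|h(z)|)/t$ of a two--variable potential $\mathcal{A}(t,s)=\int_1^s\Gamma(t,\tau)\,d\tau-\int_1^t\int_1^{H(\varsigma)}\Gamma_\varsigma\,d\tau\,d\varsigma$, and the whole difficulty is Lemma~\ref{71}: showing $\mathcal{A}_t(t,s)\le 0$, which comes down to a sign analysis of $\Gamma_t$ and is precisely where the concavity condition $\mathbf{c}^2t\dot H\ge H$ is used. Without a term of this type your calibration cannot be made tangent to $L_6\,t$ at the radial data, so the inequality $L_6\,t\ge\omega$ with equality on the extremal is not achievable in your ansatz.

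Second, the continuity step is too soft. The calibration is tangent to the integrand along the contact set, so its ``margin'' vanishes there (to second order), and the gradient variables range over a non-compact set; one cannot conclude that a strictly positive margin at $\mathbf{c}=1$ ``survives'' an $O(|\mathbf{c}-1|)$ perturbation without re-establishing exact tangency at the perturbed extremal and uniform second-order positivity both near the contact set and at infinity. The paper sidesteps this entirely: it proves the pointwise inequality for every $\mathbf{c}$ under an explicit scalar hypothesis on the radial solution of \eqref{hsecond} (the concavity condition $\mathbf{c}^2t\dot H\ge H$ for $\mathbf{c}\le 1$, or its convex counterpart, plus the balanced case $R=r^{1/\mathbf{c}}$), and then applies continuity in $\mathbf{c}$ only to verify that this scalar condition holds for $|\mathbf{c}-1|\le\varepsilon(\mathbf{m})$ (Proposition~\ref{propo}, using that $H_{q,1}-tH_{q,1}'\equiv c<0$ when $R>r$, which also explains why $\varepsilon$ collapses at $\mathbf{m}=1$). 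If you want to rescue your argument, the continuity must be routed through such an explicit condition rather than through the calibration inequality itself. Your reduction to radial extremals, the equality analysis for uniqueness, and the heuristic for $\mathbf{m}=1$ are otherwise consistent with the paper.
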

Thus our result extends  \cite[Theorem~1.4]{arma} which is the main result in \cite{arma} ($\mathbf{c}=1$).
\section{Free Lagrangians}
As  Iwaniec and Onninen did in their papers \cite{arma, memoirs, annalen} we consider the following free Lagrangians.

a)  A function in $t= |z|$;
\begin{equation}\label{functionx}L(z, h, Dh) dz = M(t) dz \end{equation}
Thus, for all $h \in  \mathcal{F}(\A, \B)$ we have

\begin{equation}\label{simpli}\int_\A L(z, h, Dh) dz =
\int_\A M(|z|) dz=2\pi\int_r^R t M(t) dt.\end{equation}

b)  Pullback of a form in $\B$ via a given mapping $h \in  \mathcal{F}(\A, \B)$;
\begin{equation}\label{pullback}L(z, h, Dh) dx\wedge dy = N(|h|) J (z, h) dz, \text{where} \ \ \ N \in  L^ 1(\R, \R_\ast) \end{equation}
Thus, for all $h \in  \mathcal{F}(\A, \B)$ we have

\begin{equation}\label{easi0}\int_\A L(z, h, Dh) dz =\int_{\B}
N(|w|) du\wedge dv = 2\pi \int_{1}^{R}
sN(s )  ds  \end{equation}

c)  A radial free Lagrangian
\begin{equation}\label{radial}L(z, h, Dh) dz = A (|h|)\frac{|h|_N}{ |z|}
dz\end{equation} where $A \in  L^   1(1, R)$
Thus, for all $h \in  \mathcal{F}(\A, \B)$ we have

\begin{equation}\label{easi}\int_\A L(z, h, Dh) dz = 2\pi \int_1^R A(|h|)\frac{\partial |h|}{\partial \rho} d\rho  = 2\pi\int_{1}^{R} A(s)ds  \end{equation}
d)  An angular free Lagrangian
\begin{equation}\label{angular}L(z, h, Dh)dz  = B (|z|)\im\left[\frac{h_T}{h}\right]dz ,\end{equation} where $B \in  L^1(R, R_\ast)$.
Thus, for all $h \in \mathcal{F}(\A, \B)$ we have

\begin{equation}\label{easi1}\int_A
L(z, h, Dh) dz =\int_1^r
\frac{B(t)}{t}
\int_{|z|=t}
\left(\frac{\partial \mathrm{Arg}\, h}{\partial \theta}
d\theta\right) dt =\int_1^r
\frac{B(t)}{t}
dt.\end{equation}
e) Every smooth function $\mathcal{A}(t,s)$ of two variables $1\le t\le r$, $1\le s\le R$ produces the following free Lagrangian
$$E(z,h, Dh)=\frac{\partial_t\mathcal{A}(|z|, |h(z)|)}{t}=\frac{\mathcal{A}_t(|z|, |h(z)|)+\mathcal{A}_s(|z|, |h(z)||h|_N)}{t},$$ where $t=|z|$ and $s=|h(z)|$.

Namely it can be easy shown that (\cite[Eq.~212]{arma})  $$\mathcal{E}[h]\bydef\int_{\A} E(z,h, Dh)dxdy=2\pi [\mathcal{A}(r,R)-\mathcal{A}(1,1)].$$

The idea behind our use of these free Lagrangians as in \cite{arma,memoirs,annalen} is to establish a general subgradient
type inequality for the integrand with two independent parameters $t \in (1, r)$ and
$s\in (1, R)$,
$$\mathbf{b}^2 |h_T|^2+\mathbf{a}^2 |h_N|^2\ge  X(s)\frac{|h|_N}{t}
+ Y(t)
\frac{|h|}{s }\im\frac{h_T}{h} + Z(s)\det\left[ Dh\right] + W(t) $$
where the coefficients $U$ and $V$ are functions in the variable $1 <s< R$, while $Z$ and
$W$ are functions in the variable $1 < t < r$. We will choose $t = |z|$ and $s = |h(z)|$ to
obtain free Lagrangians in the right hand side. We will find
coefficients  to ensure equality for a solution to radial Euler- Lagrange equation,
which we expect to be a unique minimizer up to rotations of the annuli. Finding such
coefficients requires deep analysis of this problem.

\section{Combined energy}
Assume as before  that $\A=\{z: 1\le |z|\le r\}$ and $\B=\{w: 1\le |w|\le R\}$.
Assume that $\mathbf{b}>0$ and $\mathbf{a}>0$ and let $h\in \mathcal{F}(\A, \B)$ and consider the energy $$\mathcal{E}[h]=\mathcal{E}[\mathbf{a},\mathbf{b}][h]\bydef \int_{\A}(\mathbf{a}^2 |h_N|^2+\mathbf{b}^2|h_T|^2) dxdy.$$
%$${ \left(\mathbf{b}^2 x^2+\mathbf{a}^2 y^2\right) u_{yy}+(\mathbf{a}^2-\mathbf{b}^2)\left(x u_x+2 x y u_{xy}+y u_y\right)+\left(\mathbf{a}^2 x^2+\mathbf{b}^2 y^2\right) u_{xx}}=0$$
%As $$-(-\mathbf{b}^2 + \mathbf{a}^2)^2 x^2 y^2 + (\mathbf{a}^2 x^2 +\mathbf{b}^2 y^2) (\mathbf{b}^2 x^2 + \mathbf{a}^2 y^2)=\mathbf{b}^2 \mathbf{a}^2 (x^2 + y^2)^2$$ the previous equation is an nonlinear elliptic PDE.

\subsection{Energy of radial mappings}
If $h(z)=H(t)e^{i\theta+i\varphi}$, $z=t e^{i\theta}$, then we say that $h$ is a radial mapping, where $H$ is a positive real function. Then $$\mathcal{E}[h]=2\pi \int_{1}^{r} t\left(\mathbf{a}^2\dot H^2(t)+\mathbf{b}^2\frac{H^2(t)}{t^2}\right)dt =\int_1^r\mathfrak{L}(t,H,\dot H)dt.$$
Then the Euler--Lagrange equation  $$\partial_H\mathfrak{L}(H,\dot H, t)=\frac{d}{d{t}}\partial_{\dot H} \mathfrak{L}(H,\dot H, t)$$ reduces to the differential equation $$H''(t)- \frac{\mathbf{b}^2 H(t)-\mathbf{a}^2 t H'(t)}{\mathbf{a}^2 t^2}=0.$$
The general solution is $$H(t)=c_1 \cosh\left[\frac{{\mathbf{b}}\log t}{{\mathbf{a}}}\right]+c_2 \sinh\left[\frac{{\mathbf{b}}\log t}{{\mathbf{a}}}\right].$$
Assuming that $H(1)=1$, we have  $$H(t)= \cosh\left[\frac{{\mathbf{b}}\log t}{{\mathbf{a}}}\right]+\mu \sinh\left[\frac{{\mathbf{b}}\log t}{{\mathbf{a}}}\right],\ \ \ \mu\in \mathbf{R},$$ which can be written as
\begin{equation}\label{Ht} H(t)=\frac{1}{2} t^{-1/\mathbf{c}} \left(1-\mu+(1+\mu) t^{2 /\mathbf{c}}\right), \ \ \ \ \mathbf{c}=\frac{\mathbf{a}}{\mathbf{b}}.\end{equation}
Then $$H'(t)=\frac{1}{2\mathbf{c}}  t^{-1-/\mathbf{c}} \left(-1+\mu+(1+\mu) t^{2 /\mathbf{c}}\right).$$
Thus $H'(1)=\mu /\mathbf{c}\ge 0$ if and only if $\mu\ge 0$, so if we want to map the interval $[1,r]$ onto $[1,R]$ by an increasing diffeomorphism $H$, then $\mu \ge 0$ and thus
\begin{equation}\label{Nitchegeneral}R \ge \cosh\left[{\mathbf{c}}{\log r}{}\right] =\frac{1+r^{2/\mathbf{c}}}{2r^{1/\mathbf{c}}}(\mathrm{Nitsche\ \ type\ \ inequality}).\end{equation}  Moreover if $H(r)=R$, then $$\mu = \frac{1+r^{2 /\mathbf{c}}-2 r^{1/\mathbf{c}} R}{1-r^{2 /\mathbf{c}}}$$
Then \begin{equation}\label{elastic}\mathbf{c}\ge \frac{H(t)}{t H'(t)} \text{  if and only if } \mu\ge 1,\end{equation} and

\begin{equation}\label{inelastic}\mathbf{c}<\frac{H(t)}{t H'(t)}< 0 \text{  if and only if } 0<\mu< 1,\end{equation}
For $$h_\diamond(z)=H(t)e^{i\theta}=\frac{\left(1-\mu+(1+\mu) t^{2 /\mathbf{c}}\right)}{2t^{1/\mathbf{c}}}  e^{i\theta},$$
by direct computation we obtain the relation
$$\mathcal{E}[h_\diamond]=\frac{2\mathbf{b}\mathbf{a} \pi  \left(1-4 r^{1/\mathbf{c}} R+R^2+r^{2 /\mathbf{c}} \left(1+R^2\right)\right)}{ \left(r^{2 /\mathbf{c}}-1\right)}.$$

The inverse mapping $F$ of the mapping $H$ defined in \eqref{Ht} is give by the formula

\begin{equation}\label{Fs}
F(s)=\left(\frac{s+\sqrt{-1+\mu^2+s^2}}{1+\mu}\right)^{\frac{\mathbf{a}}{\mathbf{b}}}.
\end{equation}

\begin{theorem}\label{l4}
Under condition \eqref{Nitchegeneral}, the combined energy integral $\mathcal{E}: \mathcal{F}(\A,\B)\to \mathbf{R}$,  attains its minimum  for a radial mapping $h_\diamond$. The minimizer is unique up to a rotation.
\end{theorem}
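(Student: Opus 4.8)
The plan is to establish the sharp bound $\mathcal{E}[h]\ge\mathcal{E}[h_\diamond]$ for every $h\in\mathcal{F}(\A,\B)$ by the method of free Lagrangians. Concretely, I will produce a \emph{pointwise} inequality
$$\mathbf{a}^2|h_N|^2+\mathbf{b}^2|h_T|^2\ \ge\ L(z,h,Dh),$$
in which $L$ is a sum of free Lagrangians of the types displayed in \eqref{radial}, \eqref{angular}, \eqref{pullback} and of the two–variable type (e), and in which equality holds identically when $h=h_\diamond$. Because the integral over $\A$ of each free Lagrangian is dictated only by the boundary values (see \eqref{easi}, \eqref{easi1}, \eqref{easi0} and the identity $\int_\A E\,dz=2\pi[\mathcal{A}(r,R)-\mathcal{A}(1,1)]$ for type (e)), the number $\int_\A L$ is the same for all competitors; evaluating it on $h_\diamond$, where the bound is tight, identifies it with $\mathcal{E}[h_\diamond]$. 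Integrating the pointwise inequality then gives $\mathcal{E}[h]\ge\int_\A L=\mathcal{E}[h_\diamond]$, and the equality analysis yields uniqueness up to rotation.

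To build the inequality I write $h=\rho e^{i\Theta}$ and record $|h_N|^2=\rho_N^2+\rho^2\Theta_N^2$, $|h_T|^2=t^{-2}(\rho_\theta^2+\rho^2\Theta_\theta^2)$, $\im(h_T/h)=\Theta_\theta/t$ and $|h|_N=\rho_N$. As a first attempt I discard the nonnegative terms $\mathbf{a}^2\rho^2\Theta_N^2$ and $\mathbf{b}^2\rho_\theta^2/t^2$ (both vanish on $h_\diamond$, where $\Theta=\theta$ and $\rho=H(t)$), so it suffices to bound $\mathbf{a}^2\rho_N^2+\mathbf{b}^2\rho^2\Theta_\theta^2/t^2$ from below by a free Lagrangian. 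I seek a smooth potential $\mathcal{A}(t,s)$ and a coefficient $Y(t)$ with
$$\mathbf{a}^2\rho_N^2+\mathbf{b}^2\frac{\rho^2\Theta_\theta^2}{t^2}\ \ge\ \frac{\mathcal{A}_t(t,s)+\mathcal{A}_s(t,s)\,|h|_N}{t}+Y(t)\,\im\frac{h_T}{h},\qquad s=|h|.$$
Completing the square in $|h|_N$ and in $\im(h_T/h)$ and demanding equality along the radial profile $s=H(t)$ forces the data $\mathcal{A}_s(t,H(t))=2\mathbf{a}^2 tH'(t)$ and $Y(t)=2\mathbf{b}^2H(t)^2/t$, and reduces the whole assertion to the single scalar inequality
$$\frac{\mathcal{A}_t(t,s)}{t}+\frac{\mathcal{A}_s(t,s)^2}{4\mathbf{a}^2t^2}+\frac{Y(t)^2}{4\mathbf{b}^2 s^2}\ \le\ 0\qquad(1\le t\le r,\ 1\le s\le R),$$
with equality exactly on $s=H(t)$.

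Producing $\mathcal{A}$ and proving this inequality is the crux and the step I expect to be hardest. A useful guide is the first integral of the radial Euler–Lagrange equation, $\mathbf{a}^2t^2H'(t)^2-\mathbf{b}^2H(t)^2=\mathbf{b}^2(\mu^2-1)$, which fixes $\mathcal{A}_s$ on the diagonal and makes $\mathcal{E}[h_\diamond]$ explicit. One cannot take the separable guess $\mathcal{A}(t,s)=\chi(t)+\Psi(s)$: with $\Psi'(s)=2\mathbf{a}\sqrt{\mathbf{b}^2s^2+\mathbf{b}^2(\mu^2-1)}$ and $\chi'$ fixed by the diagonal data, the left–hand side above computes to $\mathbf{b}^2(s^2-H(t)^2)^2/(t^2s^2)$, which is nonnegative rather than nonpositive and so fails off the diagonal. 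A genuine coupling between $t$ and $s$ is therefore indispensable, achieved either by a nonseparable potential $\mathcal{A}(t,s)$ or by enlarging the calibration with the pullback Lagrangian \eqref{pullback}, i.e. a weight $Z(s)\det[Dh]$ with $|Z(s)|\le 2\mathbf{a}\mathbf{b}$ (which keeps the quadratic form in the four derivative components positive semidefinite, the components $\rho\Theta_N$ and $\rho_\theta/t$ then contributing nonnegatively and vanishing on $h_\diamond$). One then chooses the data so that the strengthened scalar inequality holds on the entire rectangle. This is exactly where the Nitsche inequality \eqref{Nitchegeneral} is used: it is equivalent to $\mu\ge0$, i.e. to $H$ being a genuine increasing diffeomorphism of $[1,r]$ onto $[1,R]$, and the monotonicity it supplies is what forces the scalar inequality to hold throughout, with equality confined to the curve $s=H(t)$.

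Granting the calibration, the rest is immediate. Integrating the pointwise bound over $\A$ and invoking the homotopy invariance of the free Lagrangians gives $\mathcal{E}[h]\ge\int_\A L=\mathcal{E}[h_\diamond]$. For uniqueness, equality in $\mathcal{E}[h]=\mathcal{E}[h_\diamond]$ forces the nonnegative cross terms to vanish, hence $\Theta_N\equiv0$ and $\rho_\theta\equiv0$, while equality in the completed squares forces $\rho_N=H'(t)$ and $\Theta_\theta=1$. Together these say $\rho=H(|z|)$ and $\Theta=\theta+\varphi$ for some constant $\varphi$, that is $h=e^{i\varphi}h_\diamond$, so the minimizer is unique up to a rotation.
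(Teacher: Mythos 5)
Your overall strategy --- a pointwise subgradient inequality whose right-hand side is a sum of free Lagrangians, with equality along the radial profile --- is exactly the paper's strategy, and your preliminary analysis is sound: the reduction to the scalar inequality on the rectangle is correct, and your computation showing that the separable potential yields $\mathbf{b}^2(s^2-H^2)^2/(t^2s^2)\ge 0$ (the wrong sign) is a genuine and correct observation. But the proof stops precisely where the theorem's content begins. Having diagnosed that a coupling term is indispensable, you write ``one then chooses the data so that the strengthened scalar inequality holds on the entire rectangle'' and proceed under the heading ``granting the calibration.'' That calibration is the whole proof: one must exhibit explicit coefficients and verify both that the residual terms are single-variable functions (so that they really are free Lagrangians) and that the resulting inequality holds off the diagonal. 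The paper does this by adding the pullback term $Z(s)\det[Dh]$ with $Z$ built from the inverse profile $F=H^{-1}$, and the verification hinges on the first integral $\mathbf{a}^2t^2\dot H^2-\mathbf{b}^2H^2=\mathbf{b}^2(\mu^2-1)$, which makes $2t(\mathbf{a}^2-\mathbf{b}^2a^2)A$ a constant and $-(\mathbf{a}^2-\mathbf{b}^2a^2)A^2$ a function of $t$ alone. None of this is present in your write-up, so the central inequality is asserted, not proved.

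A second, related gap: you treat the problem as a single case, completing squares in both $|h|_N$ and $\im(h_T/h)$ simultaneously, whereas the argument genuinely bifurcates according to whether $\mu\ge 1$ (elastic) or $0\le\mu<1$ (non-elastic). The reason is a sign constraint you do not address: with the paper's choices, the retained quadratic coefficient is $\mathbf{a}^2-\mathbf{b}^2a^2=\mathbf{a}^2(\mu^2-1)/(s^2+\mu^2-1)$ in one regime and $\mathbf{b}^2-\mathbf{a}^2b^2=\mathbf{b}^2(1-\mu^2)/s^2$ in the other, and the subgradient step $x^2\ge 2Ax-A^2$ can only be applied to a term carrying a nonnegative coefficient. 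Consequently one completes the square in $|h_N|$ (and uses the radial free Lagrangian) when $\mu\ge1$, but in $|h_T|$ (and uses the angular free Lagrangian) when $\mu<1$. Your description of where the Nitsche condition \eqref{Nitchegeneral} enters (``$\mu\ge0$, hence monotonicity'') is therefore too coarse: $\mu\ge0$ guarantees that $H$ is an increasing diffeomorphism, but it is the finer dichotomy at $\mu=1$ that dictates the structure of the calibration. Until the explicit coefficients are produced and the off-diagonal inequality verified in each case, the proof is an outline rather than a proof, and the uniqueness discussion (which depends on the equality conditions of the as-yet-unspecified calibration) inherits the same gap.
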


\begin{proof}
We divide the proof into two cases
\subsection{Elastic case: $\mu\ge 1$}

We have the following general inequality

\begin{equation}\label{general}\mathbf{a}^2 |h_N|^2+\mathbf{b}^2|h_T|^2\ge (\mathbf{a}^2-\mathbf{b}^2 a^2)|h_N|^2 +(\mathbf{b}^2-\mathbf{a}^2 b^2) |h_T|^2 + 2 a b \mathbf{a}^2\mathbf{b}^2  |h_T| |h_N|\end{equation}

\begin{equation}\label{general1}\frac{\mathbf{b}^2 |h_N|^2+\mathbf{a}^2|h_T|^2}{J(h,z)}
\ge \frac{(\mathbf{b}^2-\mathbf{a}^2 a_\ast^2)|h_N|^2 +(\mathbf{a}^2-\mathbf{b}^2 b_\ast^2) |h_T|^2 + 2 a_\ast b_\ast \mathbf{a}^2\mathbf{b}^2  |h_T| |h_N|}{J(h,z)}\end{equation}

Then we take $b=\mathbf{b}/\mathbf{a}$ and it reduces to the following simple inequality

\begin{equation}\label{first}\mathbf{a}^2 |h_N|^2+\mathbf{b}^2|h_T|^2\ge (\mathbf{a}^2-\mathbf{b}^2 a^2)|h_N|^2 +2 b \mathbf{b}^2  |h_T| |h_N|,\end{equation} with the equality if and only if \begin{equation}\label{simpli11eq1}a|h_N|=|h_T|.\end{equation}
Further
\begin{equation}\label{ess}\mathbf{a}^2 |h_N|^2+\mathbf{b}^2|h_T|^2\ge\Lambda:\bydef [2(\mathbf{a}^2 -\mathbf{b}^2 a^2)A ]{|h_N|}+ 2 b \mathbf{b}^2\det\left[ Dh\right] -{(\mathbf{a}^2 -\mathbf{b}^2 a^2) A^2}.\end{equation} The equality is attained in \eqref{ess} if

\begin{equation}\label{eqat}
A=|h_N| \ \ \  \ \text{and} \ \ \  h  \ \ \ \text{is radial}.
\end{equation}
Let  \begin{equation}\label{ts} \text{$t=|z|$ and $s=|h(z)|$,}\end{equation}
\begin{equation}\label{b}a=\frac{sF'(s)}{F(s)}\end{equation}
\begin{equation}\label{bast}a_\ast=\frac{H(t)}{tH'(t)}\end{equation} and
\begin{equation}\label{bcapital}A=H'(t)\sqrt{\frac{\mathbf{a}^2 -\mathbf{b}^2{a^2_\ast}}{\mathbf{a}^2-\mathbf{b}^2a^2}}.\end{equation}
Since $$|h|_N=\frac{\left<h_N, h\right>}{|h|},$$ it follows that $|h_N|\ge |h|_N$ and thus
\begin{equation}\label{lambda}\Lambda\ge 2\left[(\mathbf{a}^2 -\mathbf{b}^2a^2)A \right]{|h|_N}+2\mathbf{b}^2\frac{sF'(s)}{F(s)}\det\left[ Dh\right] -(\mathbf{a}^2-\mathbf{b}^2 a^2)A^2.\end{equation}

Again the equality statement in  \eqref{lambda} is achieved if $h$ satisfies \eqref{eqat}. Notice that $s=|h(z)|$, and for the radial mapping $h(z)=H(t)e^{i\theta}$, $s=H(t)$.
Further
\begin{equation}\label{ss}\begin{split}2t(\mathbf{a}^2 -\mathbf{b}^2a^2)A &=2t \sqrt{\mathbf{a}^2-\mathbf{b}^2{a^2_\ast}}\sqrt{\mathbf{a}^2-\mathbf{b}^2 a^2} \dot H(t)\\&=2\sqrt{\mu^2-1}\sqrt{\mathbf{a}^2-\mathbf{b}^2{a^2}}\\&=2 \mathbf{b} \sqrt{\mu^2-1}:=\Upsilon(s)\end{split}\end{equation} and

\begin{equation}\label{tt}
-(\mathbf{a}^2-\mathbf{b}^2 a^2)A^2=-\dot H^2(t)(\mathbf{a}^2-\mathbf{b}^2 a^2_\ast)=\frac{\mathbf{b}^2 \left(1-\mu^2\right)}{t^2}=:\Gamma(t).\end{equation}

So for $t=|z|$ and $s=|h(z)|$, by using the relations\eqref{first}--\eqref{tt} we obtain \begin{equation}\begin{split}
\mathcal{E}[h]&=\int_{\A} (\mathbf{a}^2 |h_N|^2+\mathbf{b}^2|h_T|^2)dz\\&\ge \int_{\A} \left[2\left[(\mathbf{a}^2 -\mathbf{b}^2a^2)A \right]{|h|_N}+2\mathbf{b}^2\frac{sF'(s)}{F(s)}\det\left[ Dh\right] -(\mathbf{a}^2-\mathbf{b}^2 a^2)A^2\right] dz
\\&=\int_{\A}  \Upsilon(s) \frac{|h|_N}{t} dz+2\mathbf{b}^2\int_{\A}  \frac{|h(z)|F'(|h(z)|)}{F(|h(z)|)}\det\left[ Dh\right] dz+\int_{\A}\Gamma(t)dz.
\end{split}
\end{equation}
By making use of the formulas \eqref{easi0},\eqref{easi}, \eqref{easi1} and \eqref{simpli} we obtain

\begin{equation}\label{upsi}\mathcal{E}[h]\ge 2\pi\int_{1}^{R}  \Upsilon(s) ds+2\mathbf{b}^2\cdot 2\pi \int_{1}^{R}   \frac{s^2F'(s)}{F(s)}ds+2\pi \int_{1}^rt\Gamma(t)dt. \end{equation}
On the other hand, straightforward backward analysis shows that the combined energy of the mapping $h_\diamond(z)= H(t)e^{i\theta}$ is equal to the righhand side od \eqref{upsi}.

\subsection{Non-elastic case: $0\le \mu< 1$}
For a fixed diffeomorphism $h$ between two annuli $\A$ and $\B$, and $z\in \A$ and $h(z)\in \B$, as before let   $t=|z|$ and $s=|h(z)|$. If we put $a=\mathbf{a}/\mathbf{b}$ in \eqref{general}, then we have the following inequality
\begin{equation}\label{simpli11}\mathbf{a}^2 |h_N|^2+\mathbf{b}^2|h_T|^2\ge (\mathbf{b}^2-\mathbf{a}^2 b^2)|h_T|^2 +2 b \mathbf{a}^2  |h_T| |h_N|,\end{equation} where the equality hold  if and only if \begin{equation}\label{simpli11eq}b|h_T|=|h_N|.\end{equation}

By \eqref{jac},  $\det[Dh]\le |h_T| |h_N|$, with equality if and only if $h$ is radial, and therefore we continue with the following simple inequality

\begin{equation}\label{simpli1}\mathbf{a}^2 |h_N|^2+\mathbf{b}^2|h_T|^2\ge \mathcal{L}:\bydef [2(\mathbf{b}^2 -\mathbf{a}^2 b^2)B ]{|h_T|}+ 2 b \mathbf{a}^2\det\left[ Dh\right] -{(\mathbf{b}^2 -\mathbf{a}^2 b^2) B^2}.\end{equation} The equality in \eqref{simpli1} is attained if and only if \begin{equation}A=|h_T|\ \ \ \ \text{and}\ \ \ \ h \ \ \text{is radial}.\end{equation}

Choose now
\begin{equation}\label{a1}b=\frac{F(s)}{sF'(s)}\end{equation}
%
%\begin{equation}\label{as1}a_\ast =\frac{tH'(t)}{H(t)}\end{equation} and

\begin{equation}\label{AA}B=\frac{s}{t}.\end{equation}
%\begin{equation}\label{AA}A=\frac{H(t)}{t}\sqrt{\frac{\mathbf{b}^2-\mathbf{a}^2b^2_\ast}{\mathbf{b}^2 -\mathbf{a}^2{b^2}}}.\end{equation}
Since $\im\left[\frac{h_T}{h}\right]\le\left[\frac{|h_T|}{|h|}\right]$ we infer that $$|h_T|\ge |h|\im\left[\frac{h_T}{h}\right]$$ and thus
\begin{equation}\label{help}
\mathcal{L}\ge [2(\mathbf{b}^2 -\mathbf{a}^2 b^2)B ]{|h|\im\left[\frac{h_T}{h}\right]}+ 2 \mathbf{a}^2\frac{F(s)}{sF'(s)} \det\left[ Dh\right] -{(\mathbf{b}^2 -\mathbf{a}^2 b^2) B^2}.
\end{equation}
The equality is attained in \eqref{help} if $h$ is radial, and if the condition \eqref{simpli11eq} is satisfied.
Now we easily obtain that  $$b=\frac{s\dot F(s)}{F(s)}=\frac{\mathbf{b} \sqrt{-1+\mu^2+s^2}}{\mathbf{a} s}.$$ Further we get
\begin{equation}\label{pr}\begin{split}-{(\mathbf{b}^2 -\mathbf{a}^2 b^2) B^2}&=\mathcal{M}(t)\bydef -\frac{\mathbf{b}^2 \left(1-\mu^2\right)}{t^2}\end{split}\end{equation}
and from \eqref{Ht} we have
$$ b_\ast =\frac{t\dot H(t)}{H(t)}=\frac{\left(-1+\mu+(1+\mu) t^{2/ \mathbf{c}}\right)}{\mathbf{c} \left(1-\mu+(1+\mu) t^{2/ \mathbf{c}}\right)}. $$
 Thus
\begin{equation}\label{pri}\begin{split}2s(\mathbf{b}^2 -\mathbf{a}^2 b^2)B=\mathcal{N}(t)\bydef\frac{2 \mathbf{b} (\mathbf{c}^2-1)}{\mathbf{c}^2 t}.\end{split}\end{equation}
 So for $t=|z|$ and $s=|h(z)|$, by using the relations\eqref{simpli11}--\eqref{pri} we obtain \begin{equation}\begin{split}
\mathcal{E}[h]&=\int_{\A} (\mathbf{a}^2 |h_N|^2+\mathbf{b}^2|h_T|^2)dz\\&\ge \int_{\A} \left[[2(\mathbf{b}^2 -\mathbf{a}^2 b^2)B ]{{|h|\im\left[\frac{h_T}{h}\right]}}+ 2\mathbf{a}^2 a \det\left[ Dh\right] -{(\mathbf{b}^2 -\mathbf{a}^2 b^2) B^2}\right] dz
\\&=\int_{\A}  \mathcal{N}(t) \im\left[\frac{h_T}{h}\right] dz+2\mathbf{a}^2\int_{\A} \frac{F(s)}{sF'(s)} \det\left[ Dh\right] dz+\int_{\A}\mathcal{M}(t)dz.
\end{split}
\end{equation}

By making use of the formulas \eqref{easi0},\eqref{easi}, \eqref{easi1} and \eqref{simpli} we obtain

\begin{equation}\label{upsi2}\mathcal{E}[h]\ge 2\pi\int_{1}^{r}  \frac{\mathcal{N}(t)}{t} dt+2\mathbf{a}^2\cdot 2\pi \int_{1}^{R}   \frac{F(s)}{F'(s)}ds+2\pi \int_{1}^rt\mathcal{M}(t)dt. \end{equation}

If $h(z)=h_\diamond(z)=H(t)e^{i\theta}$, then $$\im\left[\frac{h_T}{h}\right]=\im\left[\frac{i H(t)e^{i\theta}}{t H(t)e^{i\theta}} \right]=\frac{1}{t},$$ and
$|h|=H$ and $$\det\left[ Dh\right]=\im\left[h_T \overline{h_N}\right]=\frac{\dot H(t)H(t)}{t}=|h_T|\cdot |h_N|.$$  It follows that
\[\begin{split}\mathcal{L}&=[2(\mathbf{b}^2 -\mathbf{a}^2 b^2)B ]{{|h|\im\left[\frac{h_T}{h}\right]}}+ 2\mathbf{a}^2 b \det\left[ Dh\right] -{(\mathbf{b}^2 -\mathbf{a}^2 b^2) B^2}\\&=\mathbf{a}^2 |h_N|^2+\mathbf{b}^2|h_T|^2.\end{split}\] Thus
$\mathfrak{E}[h]\ge \mathfrak{E}[h_\diamond].$
\end{proof}

\section{Combined distortion}
For $\mathbf{a}, \mathbf{b}>0$ and $h\in \mathcal{F}(\A,\B)$ we define the combined distortion by \begin{equation}\label{gini}\mathcal{K}[\mathbf{b},\mathbf{a}][h]\bydef \int_{\A} \frac{\mathbf{a}^2\rho^2  |\nabla \Theta|^2+\mathbf{b}^2 |\nabla \rho  |^2}{J(h,z)}dz,\end{equation} where  $h(z)=\rho e^{i\Theta}$.

Before we continue, observe that if $z=te^{i \theta}$ and  $h(z)=H(t)e^{i\theta}$ then \begin{equation}\label{conte}\frac{\mathbf{b}^2 |\nabla \rho  |^2+\mathbf{a}^2\rho^2  |\nabla \Theta|^2}{\re\left[h_N\overline{h_T}\right]}=\frac{\mathbf{b}^2\dot H^2(t)+\mathbf{a}^2\frac{H^2(t)}{t^2}}{\dot H \frac{H}{t}}.\end{equation}
Further, from Lemma~\ref{bel}, we have
\begin{equation}\label{invers}\mathcal{K}[\mathbf{b},\mathbf{a}][h]=\mathcal{E}[\mathbf{a},\mathbf{b}][h^{-1}].\end{equation}
So the subintegral expression in \eqref{gini} reduces, up to a multiplicative constant, to $$\Lambda(H,\dot H, t)\bydef t^2\frac{\mathbf{b}^2\dot H^2(t)+\mathbf{b}^2\frac{H^2(t)}{t^2}}{\dot H {H}}.$$ Then the Euler--Lagrange is
 $$\partial_H\Lambda(H,\dot H, t)=\frac{d}{d{t}}\partial_{\dot H} \Lambda(H,\dot H, t)$$ which reduces to the equation
 \begin{equation}\label{hf}
 H''[t]=-\frac{H'[t]^2 \left(-\mathbf{a}^2 H[t]+\mathbf{b}^2 t H'[t]\right)}{\mathbf{a}^2 H[t]^2}.
 \end{equation}
Then in view of the previous section and \eqref{invers} we obtain that the sufficient and necessary condition to exist a diffeomorphism $H:[1,r]\to [1, R]$ is the following inequality
\begin{equation}\label{Nitchegeneral1}r \ge \cosh\left[\mathbf{c}{\log R}\right] =\frac{1+R^{2/\mathbf{c}}}{2R^{1/\mathbf{c}}}(\mathrm{Nitsche\ \ type\ \ inequality}),\end{equation} which is dual to \eqref{Nitchegeneral}.

Moreover, as in \eqref{Fs} we obtain that the solution of \eqref{hf} that maps $[1,r]$ onto $[1,R]$ has its representaition
\begin{equation}\label{FsHt}
H(t)=\left(\frac{t+\sqrt{-1+\nu^2+t^2}}{1+\nu}\right)^{\frac{\mathbf{a}}{\mathbf{b}}},
\end{equation}
where
$$\nu = \frac{1+r^{2 /\mathbf{c}}-2 R^{1/\mathbf{c}} r}{1-R^{2 /\mathbf{c}}}.$$

Now we put $h_\diamond=H(t)e^{i\theta}$.

Now we formulate the related result for distortion:
\begin{theorem}\label{delcomb}
Under condition \eqref{Nitchegeneral1}, the combined distortion integral $\mathcal{K}: \mathcal{F}(\A,\B)\to \mathbf{R}$,  attains its minimum  for a radial mapping $h_\diamond$. The minimizer is unique up to a rotation.
\end{theorem}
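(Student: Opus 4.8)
The plan is to deduce Theorem~\ref{delcomb} from the already established Theorem~\ref{l4} by passing to inverse mappings, using the duality identity \eqref{invers}. The starting point is the relation recorded in \eqref{invers}, which comes from Lemma~\ref{bel} and the change-of-variables rule of \cite{heko}: for every $h\in\mathcal{F}(\A,\B)$,
$$\mathcal{K}[\mathbf{b},\mathbf{a}][h]=\mathcal{E}[\mathbf{a},\mathbf{b}][h^{-1}].$$
Because the class $\mathcal{F}$ is by construction closed under taking Sobolev inverses, the assignment $h\mapsto g=h^{-1}$ is a bijection of $\mathcal{F}(\A,\B)$ onto $\mathcal{F}(\B,\A)$. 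Consequently the problem of minimizing the combined distortion $\mathcal{K}[\mathbf{b},\mathbf{a}]$ over $\mathcal{F}(\A,\B)$ is literally the same extremal problem as minimizing the combined energy $\mathcal{E}[\mathbf{a},\mathbf{b}]$ over $\mathcal{F}(\B,\A)$, namely
$$\inf_{h\in\mathcal{F}(\A,\B)}\mathcal{K}[\mathbf{b},\mathbf{a}][h]=\inf_{g\in\mathcal{F}(\B,\A)}\mathcal{E}[\mathbf{a},\mathbf{b}][g].$$

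Next I would apply Theorem~\ref{l4} to the right-hand side, now for deformations of $\B=\B(1,R)$ onto $\A=\A(1,r)$. The constant governing the radial solution is unchanged, $\mathbf{c}=\mathbf{a}/\mathbf{b}$, but the source and target outer radii are interchanged, $r\leftrightarrow R$. Hence the admissibility condition \eqref{Nitchegeneral}, read for the pair $(\B,\A)$, becomes $r\ge\cosh[\mathbf{c}\log R]$, which is precisely \eqref{Nitchegeneral1}. Under this hypothesis Theorem~\ref{l4} furnishes a radial minimizer $g_\diamond(w)=\tilde H(|w|)\,w/|w|$ of $\mathcal{E}[\mathbf{a},\mathbf{b}]$ on $\mathcal{F}(\B,\A)$, unique up to a rotation, where $\tilde H\colon[1,R]\to[1,r]$ has the form \eqref{Ht} with $r$ replaced by $R$ and parameter $\nu$ fixed by $\tilde H(R)=r$.

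It then remains to transport this minimizer back. Since $g_\diamond$ is radial, its inverse $h_\diamond=g_\diamond^{-1}$ is again radial, and by the displayed identity $h_\diamond$ minimizes $\mathcal{K}[\mathbf{b},\mathbf{a}]$ over $\mathcal{F}(\A,\B)$; the inversion $g_\diamond\mapsto g_\diamond^{-1}$ carries the one-parameter rotation family of energy minimizers bijectively onto a one-parameter rotation family of distortion minimizers, yielding uniqueness up to a rotation. To match the explicit formula, I would simply invert $\tilde H$ exactly as in the passage from \eqref{Ht} to \eqref{Fs}; this reproduces \eqref{FsHt} together with the stated value of $\nu$, so no new computation beyond the one already performed in Section~3 is needed.

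The only genuinely delicate point is the \emph{global} validity of the duality \eqref{invers} across all of $\mathcal{F}(\A,\B)$, not merely for smooth or radial maps, since the whole argument collapses to an application of Theorem~\ref{l4} once this identity holds for every admissible $h$. This is exactly the content supplied by Lemma~\ref{bel} together with the change of variables $w=h(z)$ from \cite{heko}: one must justify $\det Dh\,dz\mapsto dw$ and the pointwise correspondence between the integrand of $\mathcal{K}[\mathbf{b},\mathbf{a}][h]$ at $z$ and that of $\mathcal{E}[\mathbf{a},\mathbf{b}][h^{-1}]$ at $w=h(z)$ for Sobolev homeomorphisms. With that in hand the statement is a formal consequence of the energy case, where the real work already resides.
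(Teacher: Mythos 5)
Your argument is correct, but it is not the route the paper takes, so a comparison is in order. The paper uses the duality \eqref{invers} only to \emph{locate} the radial candidate: it is how the ODE \eqref{hf}, the Nitsche condition \eqref{Nitchegeneral1} and the explicit solution \eqref{FsHt} are derived. The actual proof of minimality (the subsection following Lemma~\ref{bel}) is a direct pointwise free-Lagrangian estimate carried out on $\A$ itself: starting from $|\nabla\rho|\ge |h|_N$ and $|\nabla\Theta|\ge\abs{\im\left[h_T/h\right]}$, the integrand of $\mathcal{K}$ is bounded below by a sum of free Lagrangians, with separate choices of the coefficients $a_\ast, b_\ast, A_\ast, B_\ast$ in the elastic ($\mu>1$) and non-elastic ($\mu\le 1$) cases, and equality is verified for $h_\diamond$. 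You instead run the duality globally: since $h\mapsto h^{-1}$ is a bijection of $\mathcal{F}(\A,\B)$ onto $\mathcal{F}(\B,\A)$ and \eqref{invers} holds for every admissible $h$, minimizing $\mathcal{K}[\mathbf{b},\mathbf{a}]$ over $\mathcal{F}(\A,\B)$ is literally the problem of minimizing $\mathcal{E}[\mathbf{a},\mathbf{b}]$ over $\mathcal{F}(\B,\A)$, to which Theorem~\ref{l4} applies with $r$ and $R$ interchanged; condition \eqref{Nitchegeneral} read for the swapped pair is exactly \eqref{Nitchegeneral1}, and the minimizer, its radial form and its uniqueness up to rotation all transfer by inversion. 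Your route is shorter and avoids repeating the two-case analysis, at the price of resting entirely on the validity of \eqref{invers} for all $W^{1,1}$ homeomorphisms with $W^{1,1}$ inverse --- Lemma~\ref{bel} together with the change-of-variables and regularity-of-the-inverse results of \cite{heko} --- which you correctly identify as the one delicate point; since the paper itself asserts this identity (Proposition with \eqref{dri}) and uses it to set up the radial problem, your reduction is consistent with its framework. What the paper's direct proof buys in exchange is a self-contained lower bound on $\A$ and an explicit identification of the equality cases without passing through the inverse map.
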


We begin by the following lemma
\begin{lemma}\label{bel}
Assume that $h$ is a deformation of $\A$ onto $\B$, and let $g=h^{-1}$. If
\begin{equation}\label{gin}\mathcal{E}[\mathbf{a},\mathbf{b}][g]= \int_{\B} \mathbf{a}^2 |g_N|^2+\mathbf{b}^2 |g_T|^2dw,\end{equation} then

\begin{equation}\label{gin1} \mathcal{E}[\mathbf{a},\mathbf{b}][g]=\mathcal{K}[\mathbf{b},\mathbf{a}][h]\end{equation}
where $\mathcal{K}$ is defined in \eqref{gini}.
\end{lemma}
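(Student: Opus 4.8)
\emph{The plan.} The statement compares an integral over $\B$ (built from the radial and angular derivatives of $g=h^{-1}$) with an integral over $\A$ (built from the modulus and argument of $h$), so the natural route is the change of variables $w=h(z)$, under which $dw=J(h,z)\,dz$ for the Sobolev homeomorphisms in play (the change-of-variables/area formula being exactly the type of result of \cite{heko} invoked in the text). This turns the lemma into a \emph{pointwise} identity. Indeed, by the area formula
$$\mathcal{E}[\mathbf{a},\mathbf{b}][g]=\int_\B\bigl(\mathbf{a}^2|g_N|^2+\mathbf{b}^2|g_T|^2\bigr)\,dw=\int_\A\bigl(\mathbf{a}^2|g_N|^2+\mathbf{b}^2|g_T|^2\bigr)\bigl(h(z)\bigr)\,J(h,z)\,dz,$$
so, comparing with the definition \eqref{gini} of $\mathcal{K}[\mathbf{b},\mathbf{a}][h]$, it suffices to establish the two identities
$$\bigl(|g_N|^2\circ h\bigr)(z)=\frac{\rho^2|\nabla\Theta|^2}{J(h,z)^2},\qquad\bigl(|g_T|^2\circ h\bigr)(z)=\frac{|\nabla\rho|^2}{J(h,z)^2},$$
where $h=\rho\,e^{i\Theta}$. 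Summing these with weights $\mathbf{a}^2$ and $\mathbf{b}^2$ and multiplying by $J$ reproduces precisely the integrand of $\mathcal{K}[\mathbf{b},\mathbf{a}][h]$, giving \eqref{gin1}.

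\emph{The matrix of $Dh$ in polar frames.} To prove the pointwise identities I would work in the positively oriented orthonormal frames $\{e^{i\theta},ie^{i\theta}\}$ at $z=te^{i\theta}\in\A$ and $\{e^{i\psi},ie^{i\psi}\}$ at $w=h(z)=se^{i\psi}\in\B$. In these frames $h_N$ and $h_T$ are the two columns of the matrix $M_h$ of $Dh$, and writing $h_N=(\rho_t+i\rho\,\Theta_t)e^{i\Theta}$ and $h_T=\tfrac1t(\rho_\theta+i\rho\,\Theta_\theta)e^{i\Theta}$ and projecting onto $e^{i\psi},ie^{i\psi}$ gives
$$M_h=\begin{pmatrix}\rho_t & \rho_\theta/t\\ \rho\,\Theta_t & \rho\,\Theta_\theta/t\end{pmatrix},\qquad \det M_h=\frac{\rho}{t}\bigl(\rho_t\Theta_\theta-\rho_\theta\Theta_t\bigr)=J(h,z),$$
the determinant agreeing with $\im(h_T\overline{h_N})$ from \eqref{jac} (both frames being orthonormal and positively oriented). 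Note the first row $(\rho_t,\rho_\theta/t)$ of $M_h$ has squared norm $|\nabla\rho|^2$ and the second row $(\rho\,\Theta_t,\rho\,\Theta_\theta/t)$ has squared norm $\rho^2|\nabla\Theta|^2$.

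\emph{Inverting and concluding.} Since $g=h^{-1}$, we have $Dg(w)=\bigl(Dh(z)\bigr)^{-1}$, and because all frames are orthonormal this reads $M_g=M_h^{-1}=\tfrac1J\begin{pmatrix}\rho\,\Theta_\theta/t & -\rho_\theta/t\\ -\rho\,\Theta_t & \rho_t\end{pmatrix}$. Here $g_N$ and $g_T$ are exactly the two columns of $M_g$, so their squared norms are the squared norms of the columns of $M_h^{-1}$; the $2\times2$ inversion interchanges rows and columns, so the squared norm of column $i$ of $M_h^{-1}$ equals $J^{-2}$ times the squared norm of the \emph{opposite} row of $M_h$. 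This yields $|g_N|^2\circ h=\rho^2|\nabla\Theta|^2/J^2$ and $|g_T|^2\circ h=|\nabla\rho|^2/J^2$, which are the required identities, completing the proof. The only genuinely delicate points are the bookkeeping of the two frames (making sure the target frame of $h$ is the source frame of $g$, which is what legitimizes $M_g=M_h^{-1}$) and the justification of the change-of-variables formula in the relevant Sobolev class; the algebra itself is mechanical, and its conceptual content is simply that matrix inversion swaps the radial/angular roles while producing the factor $1/J^2$, which is exactly why the constants $\mathbf{a},\mathbf{b}$ appear transposed in $\mathcal{E}[\mathbf{a},\mathbf{b}][g]=\mathcal{K}[\mathbf{b},\mathbf{a}][h]$.
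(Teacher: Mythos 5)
Your proposal is correct and follows essentially the same route as the paper: both reduce the lemma, via the change of variables $w=h(z)$, to the pointwise identities $|g_N|\circ h=\rho|\nabla\Theta|/J$ and $|g_T|\circ h=|\nabla\rho|/J$, which the paper obtains by differentiating $g(\rho e^{i\Theta})=te^{i\theta}$ and solving the resulting $2\times2$ linear system by Cramer's rule --- the same computation as your inversion of $M_h$ in orthonormal polar frames. Your matrix formulation is a slightly cleaner way to organize the identical algebra, and your final formulas agree with the paper's \eqref{one}--\eqref{ttt}.
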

\begin{proof}
We begin by the equality $$g(\rho e^{i\Theta})=z=t e^{i\theta}.$$ By differentiating we obtain

$$ \partial_\rho g(\rho e^{i\Theta}) \rho_t + \partial_\Theta g(\rho e^{i\Theta}) \Theta_t = e^{i\theta}$$ and

$$ \partial_\rho g(\rho e^{i\Theta}) \rho_\theta + \partial_\Theta g(\rho e^{i\Theta}) \Theta_\theta = it e^{i\theta}.$$
By solving in $ \partial_\rho g$ and $\partial g_\Theta$, we get

\begin{equation}\label{one}g_N=\partial_\rho g(\rho e^{i\Theta})=\frac{e^{i\theta}(\Theta_\theta-it\Theta_t ) }{\rho_t\Theta_\theta-\rho_\theta\Theta_t}\end{equation} and

\begin{equation}\label{due}g_T=\frac{\partial_\Theta g (\rho e^{i\Theta})}{\rho}=\frac{e^{i\theta}(\rho_\theta-it\rho_t ) }{\rho(\rho_t\Theta_\theta-\rho_\theta\Theta_t)}.\end{equation}

If $h(z)=\rho e^{i\Theta}=\rho\cos \Theta+i \rho\sin\Theta$, then
\[\begin{split}J(z,h)&=(\rho \cos(\Theta))_x (\rho \sin(\Theta))_y-(\rho \cos(\Theta))_y (\rho \sin(\Theta))_x\\&=\rho(\Theta_x \rho_y-\Theta_y\rho_x)
\\&= \frac{\rho}{t}\left(\rho_t\Theta_\theta-\rho_\theta\Theta_t\right).
\end{split}\]
Thus \begin{equation}\label{tre}\rho_t\Theta_\theta-\rho_\theta\Theta_t= \frac{tJ(h,z)}{\rho}.\end{equation}

Further \begin{equation}\label{ttt}|\Theta_\theta-it\Theta_t|=t|\nabla \Theta|, \ \ \ \ |\rho_\theta-it\rho_t |=t|\nabla \rho|.\end{equation}
By applying the change of variables $w=h(z)$ in \eqref{gin}, by using \eqref{one}, \eqref{due}, \eqref{tre} and \eqref{ttt} we obtain \eqref{gin1}.

\end{proof}

Further we have
\begin{lemma} Let $h=\rho e^{i\Theta}$. Then
$$|\nabla \rho|\ge |h|_N$$ and $$|\nabla \Theta|\ge \abs{\im\left[\frac{h_T}{h}\right]}.$$
\end{lemma}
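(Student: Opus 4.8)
The plan is to establish the two pointwise inequalities by expressing both $|\nabla\rho|$ and $|\nabla\Theta|$ in terms of the polar derivatives and comparing them against the normal and tangential components that appear on the right-hand sides. First I would write $h = \rho e^{i\Theta}$ and record the gradients: in Cartesian coordinates $|\nabla\rho|^2 = \rho_x^2+\rho_y^2$ and $|\nabla\Theta|^2 = \Theta_x^2+\Theta_y^2$. The key observation is that $|\nabla\rho|$ is the full Euclidean length of the gradient of the scalar function $\rho = |h|$, whereas $|h|_N = \partial_t |h|$ is only the directional (radial) derivative of $|h|$ in the direction $\partial_t$, which is a unit vector. Since the directional derivative of any function in a fixed unit direction is bounded in absolute value by the norm of its gradient, we immediately get $|h|_N = |\langle \nabla\rho, e^{i\theta}\rangle| \le |\nabla\rho|$, giving the first inequality.

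For the second inequality I would similarly interpret $\im\left[\frac{h_T}{h}\right]$. Writing $h_T = \frac{1}{t}\partial_\theta h$ and using $h=\rho e^{i\Theta}$, a direct computation gives $\frac{h_T}{h} = \frac{1}{t}\left(\frac{\rho_\theta}{\rho} + i\,\Theta_\theta\right)$, so that $\im\left[\frac{h_T}{h}\right] = \frac{\Theta_\theta}{t}$. On the other hand, by the chain rule relating polar and Cartesian derivatives, $\Theta_\theta = t\,\langle \nabla\Theta, \tau\rangle$ where $\tau = ie^{i\theta}$ is the unit tangential direction, whence $\frac{\Theta_\theta}{t} = \langle\nabla\Theta,\tau\rangle$. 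Again this is a directional derivative of $\Theta$ along a unit vector, so $\left|\frac{\Theta_\theta}{t}\right| \le |\nabla\Theta|$, which is exactly the second claim.

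A clean way to organize both estimates uniformly is to use the decomposition already implicit in the paper: for any real function $u$ on the annulus one has $|\nabla u|^2 = u_N^2 + u_T^2$ where $u_N = \partial_t u$ and $u_T = \frac{1}{t}\partial_\theta u$ are the normal and tangential parts (the analogue of \eqref{dnt} for scalar functions). Applying this with $u=\rho$ gives $|\nabla\rho|^2 = \rho_N^2 + \rho_T^2 \ge \rho_N^2 = |h|_N^2$, and applying it with $u=\Theta$ gives $|\nabla\Theta|^2 = \Theta_N^2 + \Theta_T^2 \ge \Theta_T^2 = \left(\frac{\Theta_\theta}{t}\right)^2 = \left(\im\frac{h_T}{h}\right)^2$. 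Both inequalities then follow by dropping a nonnegative term, with equality precisely when the discarded component vanishes, i.e.\ when the level sets of $\rho$ (resp.\ $\Theta$) have the appropriate radial/angular symmetry—as they do for the radial extremal $h_\diamond = H(t)e^{i\theta}$.

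I expect the only genuine subtlety to be the bookkeeping in the formula $\im\left[\frac{h_T}{h}\right] = \frac{\Theta_\theta}{t}$, where one must verify that the real part $\frac{\rho_\theta}{\rho t}$ is correctly separated out and that the tangential-derivative identity $\Theta_T = \frac{1}{t}\Theta_\theta$ is applied with the right scaling; the inequality itself is then just the statement that a single squared component is dominated by the sum of squared components. There is no real analytic obstacle here—the whole lemma is a pointwise Cauchy–Schwarz/projection estimate—so the main care is notational consistency with the definitions of $h_N$, $h_T$, and the gradients fixed earlier in the paper.
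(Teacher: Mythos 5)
Your proof is correct and follows essentially the same route as the paper: both inequalities are obtained by identifying $|h|_N$ and $\im\left[\frac{h_T}{h}\right]=\frac{\Theta_\theta}{t}$ as directional derivatives of $\rho$ and $\Theta$ along unit radial and tangential directions, then bounding a directional derivative by the gradient norm (Cauchy--Schwarz, or equivalently dropping the complementary squared component). Your reformulation via $|\nabla u|^2=u_N^2+u_T^2$ is just a tidy repackaging of the paper's argument, not a different method.
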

\begin{proof}
The first equation follows from the following formula $$|h|_N= \left<\nabla |h|, \frac{z}{|z|}\right>.$$
Further if $z=te^{i\theta}$ then $$h_T=\frac{1}{t} e^{i\Theta}( \rho_\theta + i\Theta_\theta).$$
 So $$\abs{\im\left[\frac{h_T}{h}\right]}=\abs{\im \left[\frac{\frac{1}{t} e^{i\Theta}( \rho_\theta + i\rho\Theta_\theta)}{\rho e^{i\Theta}}\right]}=\frac{|\Theta_\theta|}{t}.$$ On the other hand $$|\nabla \Theta|^2=\Theta_x^2+\Theta_y^2.$$

 Since $$\Theta_\theta=\Theta_x x_\theta+\Theta_y y_\theta=t(-\sin\theta \Theta_x +\cos \theta \Theta_x),$$ by using the Cauchy -Schwarz inequality the second inequality follows.

 \end{proof}

\subsection{Proof of Theorem~\ref{delcomb}}
We have the following pointwise trivial estimate
\[\begin{split}{K}[h]&=\frac{\mathbf{b}^2|\nabla \rho|^2+\mathbf{a}^2 \rho^2|\nabla \theta|^2}{\im[h_T \overline{h_N}]}
\\&\ge  \frac{\mathbf{b}^2 |h|_N^2+\mathbf{a}^2\rho^2 \abs{\im\left[\frac{h_T}{h}\right]}^2}{\im[h_T \overline{h_N}]}
\\&\ge \frac{(\mathbf{b}^2 -a_\ast^2\mathbf{a}^2)|h|_N^2+(\mathbf{a}^2-b_\ast^2 \mathbf{b}^2) |\abs{\im\left[\frac{h_T}{h}\right]}|^2+2\mathbf{b}^2\mathbf{a}^2a_\ast b_\ast J(z,h) }{J(z,h)}.\end{split}\]
Now we divide the proof into two cases
\subsubsection{Elastic case}
If $\mu>1$, then we take $a_\ast=\mathbf{b}/\mathbf{a}$ and have $$K[h]\ge \frac{(\mathbf{a}^2-b_\ast^2\mathbf{b}^2) \rho^2\abs{\im\left[\frac{h_T}{h}\right]}^2+2\mathbf{b}^2 b_\ast J(z,h)} {J(z,h)}$$
with the equality if $|h|_N=b_\ast \rho\abs{\im\left[\frac{h_T}{h}\right]}$.
Now we obtain
\[\begin{split}K[h]&\ge (\mathbf{a}^2-b_\ast^2\mathbf{b}^2)[2 B_\ast |h|_T-B_\ast^2 J(z,h)]+2 b_\ast\mathbf{b}^2\\&\ge  (\mathbf{a}^2-b_\ast^2\mathbf{b}^2)[2 B_\ast |h|\im \frac{h_T}{h}-B_\ast^2 J(z,h)]+2 b_\ast\mathbf{b}^2\end{split}\]
with the equality if \begin{equation}\label{eqs} h  \text{ is radial and } B_\ast = \frac{1}{|h_N|}. \end{equation}
Further for
\begin{equation}\label{tes}b_\ast= \frac{t\dot F(t) }{F(t)} \text{   and   }B_\ast=B_\ast(t,s)=\frac{F(t)}{s \dot F(t)}\end{equation}
we obtain $$2(\mathbf{a}^2-b_\ast^2\mathbf{b}^2) B_\ast |h|=X(t)\bydef\frac{2 \mathbf{a}^2 \left(-1+\mu^2\right)}{\sqrt{-1+\mu^2+t^2}}$$

$$-(\mathbf{a}^2-b_\ast^2\mathbf{b}^2)B_\ast^2 =Y(s)\bydef \frac{\mathbf{a}^2 \left(-1+\mu^2\right)}{s^2}.$$
Then we have $$\int_{\A} K[h]\ge \int_{\A} X(t) \im \frac{h_T}{h}dz+\int_{\A} Y(s)J(z,h)dz+\int_{\A}Z(t)dz=\int_{\A} K[h_\circ]$$ because $h_\circ$ satisfies \eqref{eqs}. Here $$Z(t)=b_\ast\mathbf{b}^2=\frac{t\dot F(t) }{F(t)}\mathbf{b}^2.$$

\subsubsection{Non-Elastic case}

If $\mu\le 1$, then we take $b_\ast=\mathbf{a}/\mathbf{b}$ and have
\[\begin{split}K[h]&\ge \frac{(\mathbf{b}^2-a_\ast^2\mathbf{a}^2) |h|_N^2+2\mathbf{a}^2 a_\ast J(z,h) }{J(z,h)}
\\&\ge (\mathbf{b}^2-a_\ast^2\mathbf{a}^2)[2 A_\ast |h|_N-A_\ast^2 J(z,h)]+2 a_\ast\mathbf{a}^2,\end{split}\]
with the equality if \begin{equation}\label{test}|h_T|=\mathbf{b} |h_N|\ \ \ \
\text{and    } A_\ast=\frac{1}{|h_T|}.\end{equation}

Now chose

$$a_\ast = \frac{F(t)}{t\dot F(t) }, \ \ \ A_\ast=A_\ast(t,s)=\frac{t}{s}.$$
Then $$2s(\mathbf{b}^2-a_\ast^2\mathbf{a}^2) A_\ast=U(t)\bydef\frac{2 \mathbf{b}^2  \left(1-\mu^2\right)}{ t}$$
and $$-(\mathbf{b}^2-a_\ast^2\mathbf{a}^2)A_\ast^2 =V(s)\bydef\frac{\mathbf{b}^2  \left(-1+\mu^2\right)}{s^2}$$
$$2 a_\ast\mathbf{a}^2=W(t)\bydef \frac{\mathbf{b}\mathbf{a} \sqrt{-1+\mu^2+t^2}}{t}.$$
Then
\[\begin{split}\int_{\A} K[h]&\ge \int_{\A} U(t) \frac{|h|_N}{|h|}{h}dz+\int_{\A} V(s)J(z,h)dz+\int_{\A}W(t)dz\\&=\int_{\A} K[h_\diamond],\end{split}\] because $h_\diamond$ satisfies \eqref{test}.
\section{Total energy}
For $\alpha, \beta>0$, $\mathbf{a},\mathbf{b}>0$, let $h\in \mathcal{F}(\A,\B)$, where $\A=A(1,r)$ and $\B=A(1,R)$ are circular annuli in the complex plane. Consider the total combined energy
\begin{equation}\mathfrak{E}[h]=\mathfrak{E}\left[\begin{array}{cc}
                                               \mathbf{a} & \mathbf{b} \\
                                               \alpha & \beta
                                             \end{array}
\right][h]\bydef \alpha\mathcal{E}[\mathbf{a} ,\mathbf{b}][h]+\beta\mathcal{E}[\mathbf{b},\mathbf{a}][h^{-1}].\end{equation}

Let $\mathbf{c}=\mathbf{a}/\mathbf{b}$.
Let $$\mathcal{F}_\circ(\A,\B)=\mathcal{F}(\A,\B)\bigcap \{h:\A\to \B, h(z)=H(t)e^{i(\theta+\alpha)}\}.$$

 \begin{theorem}\label{comtotalradial}
For every  $\mathbf{c}>0 $, the total combined energy integral $\mathfrak{E}: \mathcal{F}_\circ(\A,\B)\to \mathbf{R}$,  attains its minimum  for a radial diffeomorphism $h_\circ(z)=H(t)e^{i\theta}:\A\onto\B$, which satisfies one of the following three conditions listed below
$$\left\{
  \begin{array}{ll}
    H-t\mathbf{c} \dot H\equiv 0, & \hbox{for $t\in[1,r]$;} \\
    H-t\mathbf{c} \dot H>0, & \hbox{for $t\in[1,r]$;} \\
    H-t\mathbf{c} \dot H< 0, & \hbox{for $t\in[1,r]$.}
  \end{array}
\right.$$

\end{theorem}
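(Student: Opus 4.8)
The plan is to reduce to a one–dimensional variational problem and then to exploit the fact that its Euler–Lagrange equation admits an explicit \emph{conformal} solution. Since $\mathfrak E$ does not depend on the rotation angle, I may take $h(z)=H(t)e^{i\theta}$. For such a radial competitor the combined energy is $\mathcal E[\mathbf a,\mathbf b][h]=2\pi\int_1^r\left(\mathbf a^2t\dot H^2+\mathbf b^2H^2/t\right)dt$, while by the change–of–variables identity \eqref{invers} of Lemma~\ref{bel} the distortion summand equals $\mathcal E[\mathbf b,\mathbf a][h^{-1}]=\mathcal K[\mathbf a,\mathbf b][h]=2\pi\int_1^r\frac{\mathbf a^2t^2\dot H^2+\mathbf b^2H^2}{H\dot H}dt$. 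Thus on $\mathcal F_\circ(\A,\B)$ one has $\mathfrak E[h]=2\pi\int_1^r\mathcal L(t,H,\dot H)\,dt$ with
$$\mathcal L=\alpha\mathbf a^2t\dot H^2+\alpha\mathbf b^2\frac{H^2}{t}+\beta\mathbf a^2\frac{t^2\dot H}{H}+\beta\mathbf b^2\frac{H}{\dot H},$$
to be minimized over increasing diffeomorphisms $H\colon[1,r]\to[1,R]$ with $H(1)=1$ and $H(r)=R$.

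First I would prove existence. For fixed $(t,H)$ the map $\dot H\mapsto\mathcal L$ is strictly convex on $\dot H>0$, being a positive combination of $\dot H^2$, $\dot H$ and $1/\dot H$; moreover $\mathcal L\to+\infty$ as $\dot H\to0^+$ (the barrier $\beta\mathbf b^2H/\dot H$) and as $\dot H\to\infty$ (the term $\alpha\mathbf a^2t\dot H^2$). The quadratic term yields a uniform $W^{1,2}$ bound along a minimizing sequence; weak compactness produces a limit $H$, and Tonelli's lower–semicontinuity theorem for integrands convex in the gradient variable passes to the limit. The barrier forces $\dot H>0$ a.e., so the limit is genuinely increasing, and since $\mathcal L_{\dot H\dot H}=2\alpha\mathbf a^2t+2\beta\mathbf b^2H/\dot H^3>0$ one–dimensional regularity upgrades it to a smooth diffeomorphism solving, classically, $\frac{d}{dt}\mathcal L_{\dot H}=\mathcal L_H$.

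The heart of the matter is the trichotomy. Put $\Phi(t)=H(t)-\mathbf c\,t\,\dot H(t)$, so that the three listed alternatives read $\Phi\equiv0$, $\Phi>0$, $\Phi<0$. A direct computation (in which the $\alpha$– and $\beta$–parts vanish separately, using $\mathbf a=\mathbf b\mathbf c$) shows that the power map $t\mapsto t^{1/\mathbf c}$ solves the Euler–Lagrange equation for every $\mathbf c>0$ and has $\Phi\equiv0$. Next I would solve the Euler–Lagrange equation for $\ddot H$ and substitute the relation $H=\mathbf c\,t\dot H$ valid at a point $t_0$ where $\Phi(t_0)=0$: both the numerator and the denominator then factor, the common factor cancels, and one obtains $\ddot H(t_0)=\frac{\mathbf b-\mathbf a}{\mathbf a\,t_0}\dot H(t_0)$, whence
$$\dot\Phi(t_0)=(1-\mathbf c)\dot H(t_0)-\mathbf c\,t_0\,\ddot H(t_0)=0.$$
Hence $\Phi(t_0)=0$ forces $\dot\Phi(t_0)=0$, i.e. $H$ shares its Cauchy data at $t_0$ with the conformal map through $\big(t_0,H(t_0)\big)$. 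As the Euler–Lagrange equation is second order with locally Lipschitz right–hand side on $\{H>0,\ \dot H>0\}$, uniqueness for the initial–value problem gives $H(t)=H(t_0)(t/t_0)^{1/\mathbf c}$ and therefore $\Phi\equiv0$. So $\Phi$ can vanish at an interior point only if it vanishes identically; otherwise it keeps a constant sign on $[1,r]$. This is exactly the trichotomy, the first alternative occurring precisely when the conformal map already meets the boundary data, that is when $\mathbf c=\Mod(\A)/\Mod(\B)$.

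Finally, uniqueness up to rotation reduces to uniqueness of $H$. Viewing the Euler–Lagrange solutions with $H(1)=1$ as a shooting family parametrized by $\lambda=\dot H(1)$, the sign of $\Phi(1)=1-\mathbf c\lambda$ selects the regime, so the three regimes are ordered by the initial slope; a standard monotonicity/comparison argument for this one–dimensional problem then singles out the unique $\lambda$ with $H_\lambda(r)=R$. I expect the trichotomy step to be the main obstacle: the identity $\Phi(t_0)=0\Rightarrow\dot\Phi(t_0)=0$ is what tames an otherwise non-autonomous, non–jointly–convex Lagrangian, replacing the explicit hyperbolic and algebraic solutions available in the pure energy \eqref{Ht} and pure distortion \eqref{FsHt} cases. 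I would also emphasize that, in contrast with Theorems~\ref{l4} and \ref{delcomb}, no Nitsche-type restriction on $(r,R)$ is needed here, because the barrier $\beta\mathbf b^2H/\dot H$ keeps the shooting map onto all of $(1,\infty)$, so a radial minimizer exists for every pair of annuli.
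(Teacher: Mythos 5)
Your argument is correct in substance, but it reaches the theorem by a genuinely different route than the paper. For the trichotomy you show that a zero of $\Phi=H-\mathbf{c}\,t\dot H$ forces $\dot\Phi=0$ there (your computation $\ddot H(t_0)=\frac{1-\mathbf{c}}{\mathbf{c}\,t_0}\dot H(t_0)$ checks out against \eqref{hsecond}, and the power map $t\mapsto Ct^{1/\mathbf{c}}$ does solve \eqref{hsecond} for every $\mathbf{c}>0$), so Picard--Lindel\"of identifies $H$ with the conformal solution through that point; the paper instead factors the Euler--Lagrange equation as the linear first-order relation $(H-\mathbf{c}t\dot H)'=-(H-\mathbf{c}t\dot H)M(t)$ and integrates it to \eqref{lcon}, reading the sign off the constant $c$. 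The two mechanisms are logically equivalent here, and yours is arguably more robust since it does not require verifying the explicit factorization through $M(t)$, though the paper's version yields the quantitative formula \eqref{lcon} as a by-product. The larger divergence is in existence: you invoke the direct method (convexity in $\dot H$, the barrier $\beta\mathbf{b}^2H/\dot H$, Tonelli lower semicontinuity and one-dimensional regularity), whereas the paper constructs the minimizer by reducing \eqref{hsecond} to the first-order equation \eqref{berna} for $\Phi_q(s)$ in the variable $s=H(t)/t$ and running a shooting argument (Lemmas~\ref{boundf}--\ref{hsol}) that hits every target $R$ by the intermediate value theorem. Your route is shorter and makes transparent why no Nitsche-type obstruction appears, but the regularity step --- upgrading an a.e.\ monotone $W^{1,2}$ minimizer to a $C^2$ diffeomorphism with $\dot H>0$ up to the endpoints, so that the classical Euler--Lagrange equation and hence the trichotomy apply --- is only asserted, and it is the one place where real work remains (the integrand is singular at $\dot H=0$ and the admissible class is constrained). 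Note also that the paper's constructive detour is not gratuitous: the qualitative information about $\Phi_q$ and $H_q$ harvested in Lemmas~\ref{boundf}--\ref{interes1} is exactly what feeds the concavity/convexity criteria of Proposition~\ref{propo} and the proof of Theorem~\ref{comtotal}, so an abstract existence proof at this stage would have to be supplemented later anyway.
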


In order to formulate the main result, and in connection with the previous theorem we consider the following triple of parameters $(\mathbf{c}, r, R)$ and say that they satisfy
$$\left\{
  \begin{array}{ll}
    \text{concavity case if} , & H-t\mathbf{c}^2 \dot H>0,  \hbox{for $t\in[1,r]$ and $R=H(r)$ and $\mathbf{c}\le 1$;} \\
   \text{convexity case if}, & H-t\mathbf{c}^2 \dot H<0,  \hbox{for $t\in[1,r]$ and $R=H(r)$ and $\mathbf{c}\ge 1$;}
  \end{array}
\right.$$

 Now we formulate the main result of this paper

 \begin{theorem}\label{comtotal}
Under concavity condition for $\mathbf{c}\le 1$ or convexity condition for $\mathbf{c}\ge 1$, the total combined energy integral $\mathfrak{E}: \mathcal{F}(\A,\B)\to \mathbf{R}$,  attains its minimum  for a radial mapping $h_\circ$. The same hold for the special case $R=r^{1/\mathbf{c}}$, for every $\mathbf{c}$. The minimizer is unique up to a rotation.
\end{theorem}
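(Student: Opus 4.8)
The plan is to run the free-Lagrangian subgradient argument of Theorems~\ref{l4} and \ref{delcomb} simultaneously on the two summands of $\mathfrak{E}$. First I would apply Lemma~\ref{bel} (with $\mathbf{a}$ and $\mathbf{b}$ interchanged) to remove the inverse map, rewriting $\mathfrak{E}[h]=\alpha\,\mathcal{E}[\mathbf{a},\mathbf{b}][h]+\beta\,\mathcal{K}[\mathbf{a},\mathbf{b}][h]$, i.e.
\[\mathfrak{E}[h]=\int_{\A}\left[\alpha\left(\mathbf{a}^2|h_N|^2+\mathbf{b}^2|h_T|^2\right)+\beta\,\frac{\mathbf{a}^2|\nabla\rho|^2+\mathbf{b}^2\rho^2|\nabla\Theta|^2}{\det[Dh]}\right]dz,\qquad h=\rho e^{i\Theta}.\]
By Theorem~\ref{comtotalradial} the radial subclass is minimized by $h_\circ(z)=H(t)e^{i\theta}$, where $H$ solves the radial Euler--Lagrange equation of this functional; unlike the pure energy and distortion problems, such an $H$ with $H(r)=R$ exists for \emph{every} pair of annuli, so no Nitsche-type restriction is imposed. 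Writing $F=H^{-1}$, $t=|z|$, $s=|h(z)|$, the target is a pointwise estimate
\[\alpha\left(\mathbf{a}^2|h_N|^2+\mathbf{b}^2|h_T|^2\right)+\beta\,\frac{\mathbf{a}^2|\nabla\rho|^2+\mathbf{b}^2\rho^2|\nabla\Theta|^2}{\det[Dh]}\ \ge\ X(s)\,\frac{|h|_N}{t}+Y(t)\,\im\frac{h_T}{h}+Z(s)\,\det[Dh]+W(t),\]
holding for every $h\in\mathcal{F}(\A,\B)$, with right-hand side a sum of the free Lagrangians \eqref{radial}, \eqref{angular}, \eqref{pullback} and \eqref{functionx}. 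Integrating through \eqref{easi}, \eqref{easi1}, \eqref{easi0} and \eqref{simpli} then makes the right-hand side independent of $h$, and equality at $h_\circ$ yields $\mathfrak{E}[h]\ge\mathfrak{E}[h_\circ]$ at once.

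To produce the four densities I would bound the energy summand by \eqref{general}--\eqref{ess} and the distortion summand by the dual inequalities from the proof of Theorem~\ref{delcomb}, but with every auxiliary parameter read off from the \emph{total} minimizer rather than from the individual ones: I take $a=sF'(s)/F(s)$ and $a_\ast=H(t)/(t\dot H(t))$ together with their distortion counterparts, all equal to the common ratio $\kappa:=H/(t\dot H)$ along $h_\circ$. Admissibility of the right-hand side then dictates the structure: the coefficient of $|h|_N/t$ must be a function of $s$ alone, giving a radial density $X(s)$ as in \eqref{radial}; the coefficient of $\im(h_T/h)$ must be a function of $t$ alone, giving an angular density $Y(t)$ as in \eqref{angular}; and the two $\det[Dh]$-coefficients produced by the two summands must fuse into one function $Z(s)$, a pullback density as in \eqref{pullback}, leaving a pure term $W(t)$ of type \eqref{functionx}. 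Imposing these matchings and using the radial Euler--Lagrange equation for $H$ pins down $X,Y,Z,W$ uniquely. By construction equality holds throughout exactly when $h$ is radial—so that $\det[Dh]=|h_T||h_N|$, $|\nabla\rho|=|h|_N$ and $\rho|\nabla\Theta|=|h|\,\im(h_T/h)$—and the proportionality $a|h_N|=|h_T|$ of \eqref{general} is met, i.e. precisely for $h=h_\circ$ up to a rotation; this also delivers the uniqueness claim.

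The sign hypothesis enters when the pointwise estimate is verified globally. After the substitutions, the difference of the two sides separates into a square coming from the energy summand, whose coefficient is proportional to $\mathbf{a}^2-\mathbf{b}^2a^2$, and a square coming from the distortion summand, carrying the dual discriminant; since $a=a_\ast=\kappa$ on $h_\circ$ these two discriminants are oppositely monotone in $\kappa$, while the Euler--Lagrange equation ties their weights $\alpha,\beta$ to $H$. Completing the squares and inserting the radial equation, the demand that the combined remainder be nonnegative for all competitors collapses to a single definite-sign condition on $\mathbf{b}^2H-\mathbf{a}^2 t\dot H$, equivalently on $H-t\mathbf{c}^2\dot H$; the quadratic factor $\mathbf{c}^2=\mathbf{a}^2/\mathbf{b}^2$ replaces the naive per-summand threshold $\mathbf{c}$ precisely because the Euler--Lagrange coupling pairs the two squares. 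Hence in the concavity case ($\mathbf{c}\le1$, $H-t\mathbf{c}^2\dot H>0$) one choice of the elementary inequalities keeps all coefficients real and the remainder nonnegative, while in the convexity case ($\mathbf{c}\ge1$, $H-t\mathbf{c}^2\dot H<0$) the dual choice does. The conformal map $H(t)=t^{1/\mathbf{c}}$, which arises exactly when $R=r^{1/\mathbf{c}}$, has $\kappa\equiv\mathbf{c}$ and sits on the borderline for every $\mathbf{c}$; there the linear corrections vanish and the bound holds unconditionally, which accounts for the stated special case.

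The main obstacle is precisely this global verification. In the single-functional Theorems~\ref{l4} and \ref{delcomb} the subgradient inequality was a one-line completion of squares; here the energy and distortion contributions pull in opposite directions, and the real difficulty is to choose the auxiliary parameters so that the \emph{same} radial profile realizes equality in \emph{both} summands while the combined remainder stays a genuine, globally nonnegative quadratic-plus-linear form in $(|h_N|,|h_T|)$. Establishing that this remainder keeps one sign on all of $\A$—which is exactly where the monotonicity encoded in the concavity or convexity of $H$ is indispensable—is the crux of the argument. Once it is in place, the remaining integration is routine via \eqref{easi0}, \eqref{easi}, \eqref{easi1} and \eqref{simpli}, and gives $\mathfrak{E}[h]\ge\mathfrak{E}[h_\circ]$ with equality only for rotations of $h_\circ$.
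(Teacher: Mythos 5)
Your overall strategy---combine the subgradient inequalities \eqref{general} and \eqref{general1} for the two summands, read all auxiliary parameters off the radial minimizer $H$ of Theorem~\ref{comtotalradial}, and integrate free Lagrangians---is the right skeleton, and your treatment of the balanced case $R=r^{1/\mathbf{c}}$ and of uniqueness is essentially what the paper does. But there is a genuine gap at the heart of the argument: you assert that admissibility ``dictates'' that the coefficient of $|h|_N/t$ can be taken to be a function $X(s)$ of $s$ alone, so that the right-hand side consists of the four single-variable free Lagrangians \eqref{radial}, \eqref{angular}, \eqref{pullback}, \eqref{functionx}. For the \emph{total} combined energy this is false. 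With $a=sF'(s)/F(s)$ and $a_\ast=H(t)/(t\dot H(t))$ the term multiplying $|h|_N$ that survives from the two summands is
$$\Gamma(t,s)=2t\sqrt{\mathbf{a}^2-\mathbf{b}^2a^2}\,\sqrt{\mathbf{a}^2-\mathbf{b}^2a_\ast^2}\left(\alpha\dot H(t)+\beta\frac{F(s)}{s}\right),$$
a genuine function of \emph{both} variables that cannot be collapsed to $X(s)$ while keeping equality along $s=H(t)$. This is precisely why the paper introduces the two-variable free Lagrangian of type (e), sets $\mathcal{A}(t,s)=\int_1^s\Gamma(t,\tau)\,d\tau-\int_1^t\int_1^{H(\varsigma)}\Gamma_\varsigma(\varsigma,\tau)\,d\tau\,d\varsigma$, and proves Lemma~\ref{71}: $\Gamma(t,s)|h|_N\ge \mathcal{A}_t(t,s)+\mathcal{A}_s(t,s)|h|_N$, whose integral is the constant $2\pi(\mathcal{A}(r,R)-\mathcal{A}(1,1))$. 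Your four-density ansatz, which works for Theorems~\ref{l4} and \ref{delcomb}, has no slot for this object, and ``imposing the matchings'' would leave you unable to achieve equality at $h_\circ$.

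Relatedly, you mislocate where the concavity/convexity hypothesis is used. It is not a pointwise positivity condition on a completed square in $(|h_N|,|h_T|)$; it enters in the proof of Lemma~\ref{71}, where one shows $\mathcal{A}_t(t,s)\le\mathcal{A}_t(t,H(t))=0$ by checking that $\Gamma_t(t,s)$ changes sign exactly at $s=H(t)$. The factor $\mathbf{a}^2tH'(t)-\mathbf{b}^2H(t)$ in $\Gamma_t$ (equivalently the sign of $H-\mathbf{c}^2t\dot H$) together with the monotonicity of $F(s)/s$ is what forces this sign pattern---so your intuition that $H-t\mathbf{c}^2\dot H$ is the relevant quantity is correct, but the mechanism is the monotonicity of $\mathcal{A}_t$ in $s$, not the nonnegativity of a quadratic remainder. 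A further structural difference: in the concavity case the paper kills the angular terms outright by choosing $b=b_\ast=\mathbf{b}/\mathbf{a}$, so no angular density $Y(t)$ appears at all; and the convexity case is not argued symmetrically but reduced to the concavity case for the inverse map. Without the two-variable Lagrangian $\mathcal{A}$ and Lemma~\ref{71} your proof cannot be completed as written.
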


\begin{remark}
In \cite{arma} is considered the special case $\mathbf{c}=1$, and proved the result without restriction on convexity. However in this special case the resulting function $H$ is
$$\left\{
  \begin{array}{ll}
    concave , & \hbox{if $R>r$;} \\
    convex, & \hbox{if $R<r$;}\\
    identity, & \hbox{if $R=r$.}
  \end{array}
\right.$$
This in turn implies that our result covers the main result of Iwaniec and Onninen in \cite{arma}.
\end{remark}
\section{Radial total minimizers and the proof of Theorem~\ref{comtotalradial}}
Assume that $h$ is radial: $h(z)=H(t)e^{i\theta}$, $z=te^{i\theta}$.
Then $$\mathcal{E}[h]=2\pi\int_{1}^r\Lambda(t,H,\dot H)dt,$$ where $$\Lambda(t,H,\dot H)=\left(\alpha t+\beta\frac{t^2}{H\dot H}\right)\left(\mathbf{a}^2\dot H^2(t)+\mathbf{b}^2\frac{H^2(t)}{t^2}\right).$$ Now the Euler--Langrange equation $$\Lambda_H(t,H,\dot H)=\frac{\partial \Lambda_{\dot H}(t,H,\dot H)}{\partial t}$$ is equivalent with the equation  \begin{equation}\label{hsecond}H''(t)=\frac{H'(t)^2 \left(H(t)-\mathbf{c}^2 t H'(t)\right) \left(t+\gamma H(t) H'(t)\right)}{t H(t) \left(H(t)+\mathbf{c}^2 \gamma  t H'(t)^3\right)},\end{equation} where $$\gamma=\frac{\alpha}{\beta},  \ \ \ \mathbf{c}=\frac{\mathbf{a}}{\mathbf{b}}<1.$$
The equation \eqref{hsecond} can be written as
$$\left(H(t)-\mathbf{c} t H'(t)\right)'= \left(-H(t)+\mathbf{c} t H'(t)\right)M(t),$$ where $$M(t)=\frac{H'(t) \left(\mathbf{c}^2 t H'(t)+H(t) \left(-1+\mathbf{c}+\mathbf{c} \gamma  H'(t)^2\right)\right)}{H(t) \left(H(t)+\mathbf{c}^2 \gamma  t H'(t)^3\right)}.$$
Thus  $$ (H-t\mathbf{c}\dot H)'=- (H-t\mathbf{c}\dot H) M(t),$$ which is
equivalent to

$$\big[\log(H-t\mathbf{c}\dot H)\big]'=- M(t).$$
Thus $$\log(t\mathbf{c}\dot H-H) = \int_t^{t_1} (- M(\tau))d\tau+c,$$

\begin{equation}\label{lcon}t\mathbf{c}\dot H-H=c \exp[\int_{t_1}^{t} (- M(\tau))d\tau].\end{equation}

Now we prove that the diffeomorphic  solution of \eqref{hsecond} does
exist. The idea is simple, we want to reduce the equation
\eqref{hsecond} into
 an ODE of the first order, but to do this we assume that the diffeomorphic solution $H$ exists. This assumption is not harmful. Namely,
  the proof can be started from a certain first order ODE \begin{equation}\label{helpi}\Phi'=G[t,\Phi(t)],\end{equation} which has to do nothing
   with $H$ (see \eqref{berna} below). Then we solve \eqref{helpi} and, by using the solutions of it, we construct solutions of \eqref{hsecond}.
   Such a solution $H$ will be a diffeomorphism and so, it  will satisfy one of the three statements listed below. On the other hand if we have a diffeomorphic solution $H$ of \eqref{hsecond}, then it will satisfy the equation \eqref{lcon} for some continuous $M$ and this will imply the uniqueness of solution $H$.

So if $H$ is a strictly increasing $C^2$ diffeomorphism defined in a domain $(a,b)$ that solves the equation \eqref{hsecond},
then $\dot H(s)>0$ and from \eqref{lcon}, we conclude that there are three possible cases:
\begin{itemize}
\item {\bf Case~1} $c=0$. Then $H-t\mathbf{c} \dot H\equiv 0$, or what is the same $H(t)=t^{1/\mathbf{c}}$, and this produces the mapping $h(z)=|z|^{1/\mathbf{c}}e^{i\theta}$, so in this case $${R}=r^{1/\mathbf{c}}.$$
\item {\bf Case~2} $c>0$. Then $t\mathbf{c}\dot H-H>0$ and thus $s(t)=\frac{H(t)}{t^{1/\mathbf{c}}}$ is monotone increasing. Then $${R}>{r^{1/\mathbf{c}}}.$$
\item {\bf Case~3} $c<0$. Then $t\mathbf{c}\dot H-H<0$ and thus $s(t)=\frac{H(t)}{t^{1/\mathbf{c}}}$ is  monotone decreasing. Then $$R<r^{1/\mathbf{c}}.$$
\end{itemize}
Now if $c>0$ and $\mathbf{c}<1$, then we define elasticity function $\eta$
$$\eta(t)=\frac{H(t)}{t}$$ and obtain \begin{equation}\label{etapoz}\eta'(t)=\frac{t\dot H(t)-H(t)}{t^2}\ge \frac{\mathbf{c} t\dot H(t)-H(t)}{t^2}> 0.\end{equation}
We  take the new variable $s=\frac{H(t)}{t}$ and the new function
\begin{equation}\label{prin}\Phi(s)=\frac{1}{s}\dot H\left(\frac{H(t(s))}{t(s)}\right),\end{equation} where $t(s)$ is the inverse of $s=s(s)$. Then we obtain $$\ddot H(t)=\frac{\dot \Phi(s) (t \dot H(t) - H(t))}{t^2}.$$
We can without loss of generality assume that $\mathbf{c}<1$, otherwise $\frac{1}{\mathbf{c}}<1$ and consider the duality problem which is the same but instead of $\mathbf{c}$ has $1/\mathbf{c}$.

The auxiliary equation which we have to solve is

\begin{equation}\label{berna}\Phi'(s)=-\frac{\Phi(s) \left(\mathbf{c}^2 \Phi (s)^2-1\right) \left(1+s^2 \gamma \Phi (s)^2\right)}{s (\Phi(s)-1) \left(1+\mathbf{c}^2\gamma  s^2 \Phi^3(s)\right)}, \ \ \ s\in(0,\infty),\end{equation} i.e.

$$\Phi'(s)= U(s,\Phi(s))$$ where $$U(s,y)=-\frac{y \left(\mathbf{c}^2 y^2-1\right) \left(1+s^2 \gamma y^2\right)}{s (y-1) \left(1+\mathbf{c}^2\gamma  s^2 y^3\right)}.$$

Let $\Phi_p=\Phi$ be a solution so that
$$\Phi(1)=q,\ \ \  0<q<1 \vee q>1.$$

%Then $$\lim_{q\downarrow 0} \Phi_q(1)=0,$$ and
%$$\lim_{p\uparrow\infty} \Phi_q(1)=+\infty.$$ Moreover we have the following lemma

\begin{lemma}\label{boundf}
For fixed $0<\mathbf{c}<1$ and
\begin{enumerate}
  \item $q\in (0,1)$, there is $0<a(q)<1$ such that  the solution $ \Phi_q$ is a decreasing diffeomorphism of its maximal interval $(a(q),\infty)$ onto $\left(0,1\right)$ (see Figure~1).

 \item  $q\in (1, \frac{1}{\mathbf{c}})$ the solution $ \Phi_q$ is an increasing diffeomorphism of $(1,\infty)$ onto $\left(q,\frac{1}{\mathbf{c}}\right)$ (see Figure~2).
  \item if $q=1/\mathbf{c}$, the solution is $\Phi_q(s)\equiv q$.

  \item $q\in(\frac{1}{\mathbf{c}},+\infty)$ the solution $ \Phi_q$ is a decreasing diffeomorphism of $(0,\infty)$ onto $\left(\frac{1}{\mathbf{c}},
  +\infty\right)$ (see Figure~3).

\end{enumerate}

\end{lemma}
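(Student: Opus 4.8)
The plan is to read Lemma~\ref{boundf} as a qualitative phase study of the scalar equation $\Phi'=U(s,\Phi)$ on the quadrant $\{s>0,\ y>0\}$, organised around the three horizontal lines $y=0$, $y=1$ and $y=1/\mathbf{c}$. First I would note that on each of the open strips $S_1=\{0<y<1\}$, $S_2=\{1<y<1/\mathbf{c}\}$, $S_3=\{y>1/\mathbf{c}\}$ (all with $s>0$) the field $U$ is a smooth rational function, since the only vanishing factor of its denominator there is $y-1$, while $1+\mathbf{c}^2\gamma s^2y^3>0$ always; hence Picard--Lindel\"of gives local existence and uniqueness. Because $U(s,1/\mathbf{c})\equiv0$, the constant $\Phi\equiv1/\mathbf{c}$ is a solution---this is statement (3)---and by uniqueness no trajectory crosses $y=1/\mathbf{c}$, while the line $y=1$ is a singular barrier and $y=0$ a trivial one. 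Thus every $\Phi_q$ stays in the strip determined by its initial value $q$.

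The next step is the sign count. Since $0<\mathbf{c}<1$ gives $1/\mathbf{c}>1$, the positive factors $1+\gamma s^2y^2$ and $1+\mathbf{c}^2\gamma s^2y^3$ do not affect the sign and $\operatorname{sign}U=-\operatorname{sign}\frac{\mathbf{c}^2y^2-1}{y-1}$; this is $<0$ on $S_1$, $>0$ on $S_2$ and $<0$ on $S_3$. Hence $\Phi_q$ is strictly decreasing in case (1), increasing in case (2), decreasing in case (4), which already gives the monotonicity assertions. For the right endpoint $s\to\infty$ each such monotone bounded trajectory has a limit $L$, and I would exclude every interior $L$ by the elementary growth estimate $U(s,L)\sim\frac{c(L)}{s}$ with $c(L)=-\frac{\mathbf{c}^2L^2-1}{\mathbf{c}^2(L-1)}\neq0$ (valid because $s^2\Phi^2\to\infty$ whenever $\Phi$ stays off the strip floor): a convergent trajectory cannot obey $\Phi'\sim c(L)/s$, so $L=1/\mathbf{c}$ in cases (2),(4) and $L=0$ in case (1).

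The genuinely quantitative work is at the left endpoints. In case (1), running $s$ backwards drives $\Phi_q\uparrow1$, where $U$ blows up, so I would pass to the inverse $s=s(y)$ and use that $\mathbf{c}^2y<1$ forces $\frac{1+\mathbf{c}^2\gamma s^2y^3}{1+\gamma s^2y^2}<1$, whence $\big|\frac{d}{dy}\log s\big|=\frac{1}{s|U|}\le\frac{1-y}{y(1-\mathbf{c}^2y^2)}$; this bound is integrable on $(q,1)$, which shows both that $y=1$ is reached at a finite parameter and that $\log s$ stays bounded below, i.e. $a(q)>0$ (and $a(q)<1$ is immediate from strict monotonicity). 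In case (4), running $s$ backwards drives $\Phi_q\uparrow\infty$; here I would exploit the scaling variable $C=s\Phi$, for which a direct computation gives $C'=\frac{\Phi^2(1-\mathbf{c}^2\Phi)(1+\gamma s^2\Phi)}{(\Phi-1)(1+\mathbf{c}^2\gamma s^2\Phi^3)}$, and a dominant balance along the blow-up yields $\frac{d}{ds}\big(\tfrac{\gamma}{2}C^2+\log s\big)\to0$, so $\tfrac{\gamma}{2}C^2+\log s$ stays bounded and $\Phi\to\infty$ is possible only as $s\to0$; this gives global backward existence and the maximal interval $(0,\infty)$. Case (2) needs only the observation that $s=1$ is an interior regular point, so restricting the increasing trajectory to $(1,\infty)$ produces the diffeomorphism onto $(q,1/\mathbf{c})$.

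I expect the left-endpoint analysis to be the main obstacle: it is exactly where $U$ is singular ($y\to1$ in (1), and the corner $(s,y)\to(0,\infty)$ in (4)), so that the soft sign-and-monotonicity arguments must be replaced by the two quantitative estimates above, each of which uses $0<\mathbf{c}<1$ in an essential way; proving $a(q)>0$ and global backward existence down to $s=0$ is where the real content lies. Once each branch is exhibited as a strictly monotone $C^1$ map between the stated open intervals with the correct endpoint limits, the inverse function theorem upgrades it to a diffeomorphism, completing (1)--(4), while (3) is the constant solution.
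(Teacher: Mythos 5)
Most of your argument runs parallel to the paper's: the invariant strips cut out by the constant solutions $\Phi\equiv 0$ and $\Phi\equiv 1/\mathbf{c}$ and by the singular line $y=1$, the sign count of $U$ on each strip, and the exclusion of interior limits as $s\to\infty$ via $U(s,L)\sim c(L)/s$ (the paper phrases this as $\log\Phi_q(M)/\Phi_q(1)\gtrsim\log M$). Your treatment of the left endpoint in case (1) — inverting to $s=s(y)$ and using $\mathbf{c}^2y<1$ to get the integrable bound $\bigl|\tfrac{d}{dy}\log s\bigr|\le\tfrac{1-y}{y(1-\mathbf{c}^2y^2)}$ — is a clean alternative to the paper's route, which instead integrates $\tfrac{(\Phi-1)\Phi'}{\Phi}$ to the identity $q-1-\log q=\int_{a(q)}^{1}\tfrac{(1-\mathbf{c}^2\Phi^2)(1+\gamma s^2\Phi^2)}{s(1+\mathbf{c}^2\gamma s^2\Phi^3)}\,ds$ and reads off $a(q)>0$ from the non-integrability of $1/s$ at $0$ (this identity also gives $a(q)\to0$ as $q\to0$, which the paper uses later). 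Both are fine.

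The genuine gap is in your case (4) left-endpoint argument. Your formula $C'=\frac{\Phi^2(1-\mathbf{c}^2\Phi)(1+\gamma s^2\Phi)}{(\Phi-1)(1+\mathbf{c}^2\gamma s^2\Phi^3)}$ for $C=s\Phi$ is correct, but the claimed near-conservation $\tfrac{d}{ds}\bigl(\tfrac{\gamma}{2}C^2+\log s\bigr)\to0$ holds only in the scaling regime $sC\to0$, $C^3/s\to\infty$, i.e. essentially only when $s\to0$ with $C$ of moderate size. In the regime you actually need to exclude — blow-up of $\Phi$ at some $\varpi>0$, where $s$ is bounded away from $0$ and $C\to\infty$ — one finds $C'\to-1$, hence $\gamma CC'+\tfrac1s\sim-\gamma C\to-\infty$, so the quantity $\tfrac{\gamma}{2}C^2+\log s$ is not even approximately conserved there and cannot be invoked to rule that regime out; as written the argument is circular (it presupposes the solution is already in the $s\to0$ regime). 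The fix is available from your own computation: since $C'\to-1$ uniformly for $s\ge\varpi>0$ as $\Phi\to\infty$, $C'$ is bounded on $(\varpi,1]$, so $C$ is bounded there, contradicting $C=s\Phi\to\infty$; hence $\varpi=0$. (The paper excludes $\varpi>0$ differently, via a bound of the form $|\Phi_q'/\Phi_q|\le C/s$ along the solution.) You should also add the short complementary step that $\Phi_q(s)\to+\infty$ as $s\to0$ — e.g. from $|U|\ge c_0/s$ when $\Phi$ is bounded and bounded away from $1/\mathbf{c}$ — so that the range is all of $\bigl(\tfrac{1}{\mathbf{c}},+\infty\bigr)$ as claimed.
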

\begin{figure}[htp]
\centering
\includegraphics{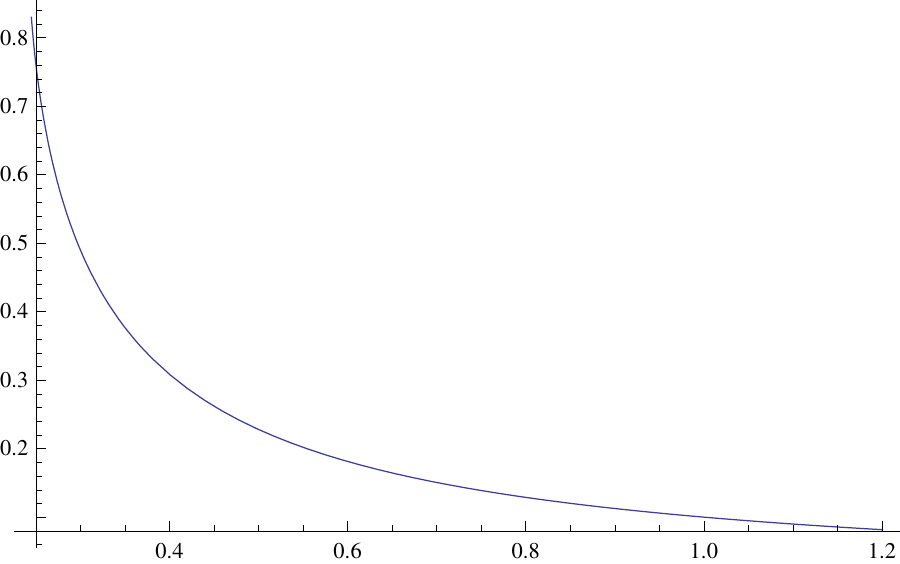}
\caption{The graph of $\Phi_q$ for $q=1/10$, $\mathbf{c}=1/2$, $\gamma=1$}
\end{figure}

\begin{figure}[htp]
\centering
\includegraphics{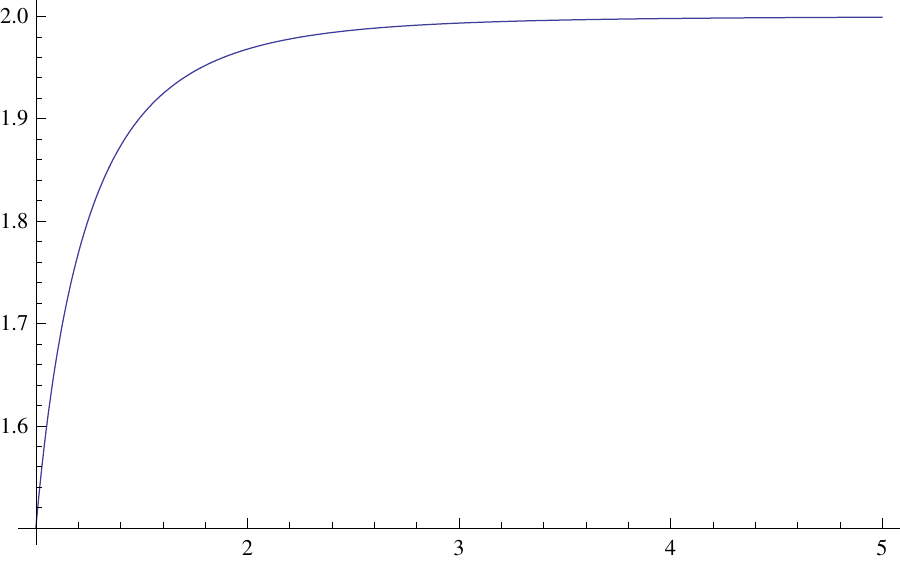}
\caption{The graph of $\Phi_q$ for $q=1/3$, $\mathbf{c}=1/2$, $\gamma=1$}
\end{figure}

\begin{figure}[htp]\label{f1}
\centering
\includegraphics{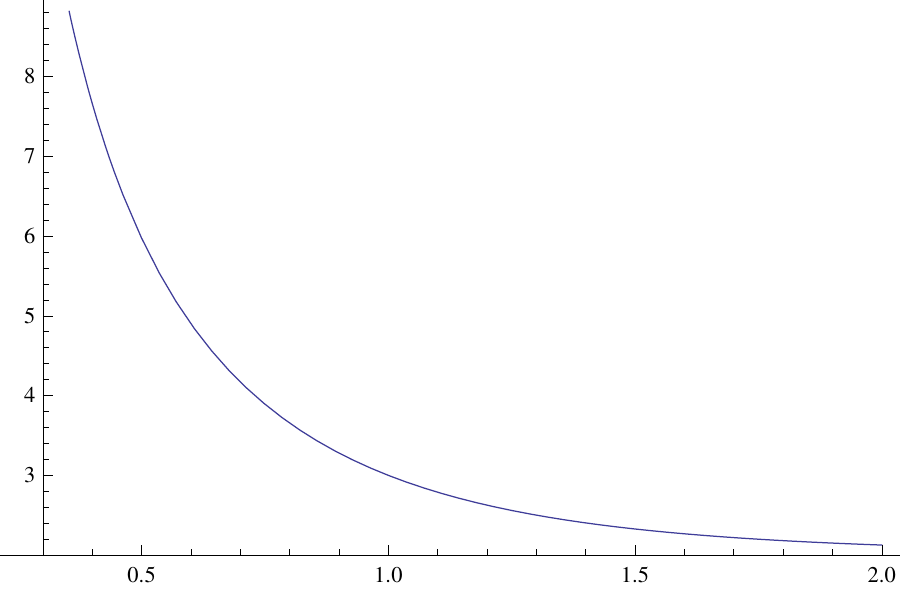}
\caption{The graph of $\Phi_q$ for $q=3$, $\mathbf{c}=1/2$, $\gamma=1$}
\end{figure}
\begin{proof}
(1) Let $q\in(0,1)$ and  let be  $(a(q),\omega)$ the maximal interval of $\Phi_q$ containing $1$. Then $\Phi_q(a(q))=1$, otherwise we could continue $\Phi_q$ below $a(q)$.   From \eqref{berna} by integrating we conclude that $$\Phi(1)-1-\log \Phi(1)=\int_{a(q)}^1 \frac{ \left(1-\mathbf{c}^2 \Phi (s)^2\right) \left(1+s^2\gamma  \Phi (s)^2\right)}{s  \left(1+\mathbf{c}^2\gamma  s^2 \Phi^3(s)\right)}ds.$$ Hence $$\lim_{q\to 0}a(q)=0.$$ If the maximal interval is $(a(q),\omega)$, where $\Phi_p$ decreases, then $\Phi_p(\omega)=0$, but then it coincides with the constant solution $\Phi_0\equiv 0$, and this is impossible.

 So  $\Phi_q$ has as its maximal interval $(a(q),\infty)$. Also $\lim_{s\to \infty} \Phi_p(s)=0$, otherwise in view of \eqref{berna},
we would have $$\int_1^M\frac{\Phi_p'(s)}{\Phi_p(s)}ds=\log \frac{\Phi_p(M)}{\Phi_p(1)}\ge C \log M$$ which is impossible.

(2) If $1<q<1/\mathbf{c}$, then $\Phi_q'(1)>0$. The rest of the proof is similar to the previous one. It should be noticed that $\Phi_q$ does not reaches $1/\mathbf{c}$, otherwise it will be stationary as in the previous case.

(4) Let $q\in(\frac{1}{\mathbf{c}},+\infty)$. Then $\Phi_q'(1)<0$, and so $\Phi_q$ is decreasing  in the maximal interval $(\varpi,\omega)$ containing $1$. We state that $\omega=\infty$ and $\varpi=0$. %Namely $\Phi_p'(s)$ has the same sigh as $$\phi(s)=\frac{1-\mathbf{c} \Phi_p(s)}{\Phi_p(s)-1}> \frac{1-\mathbf{c} \Phi_p(1)}{\Phi_p(1)-1},$$ because $\Phi_p(s)<\Phi_p(1)$.
%If $\omega<\infty$, then $(\omega,\Phi_p(\omega))$ would be the point of continuity of $U$, so that $\Phi_p$ would be continued above $\omega$.
%Namely $\Phi_p(\omega)<\Phi_p(1)$, and $\Phi_p(\omega)\le 0$. This means that $\Phi_p(\omega)\ge \frac{1}{\mathbf{c}}>1$.
{Further, if  $\omega<\infty$, then $\Phi_q(\omega)=1/\mathbf{c}$ and $\Phi_q'(\omega)=0$. But then $\Phi_q$ would coincide with the constant solution $\Phi_\mathbf{c}(s)\equiv {1/\mathbf{c}}$. I.e. there is a interval around $\omega$ so that $\Phi_\mathbf{c}$ coincide. This is impossible since $\Phi_q$ is strictly decreasing in $(\varpi,\omega)$. Thus $\omega=\infty$. Further $\lim_{s\to\infty} \Phi_q(s)=\frac{1}{\mathbf{c}}$. If not, then since $\Phi_q$ is decreasing we would have}
$\Phi_q(s)>1/\mathbf{c}+\epsilon$, $\epsilon>0$ for all $s$. Then we would have $$\left|\frac{\Phi_q'(s)}{\Phi_q(s)}\right|\ge C\frac{1}{s}.$$ By integrating in $[1,s]$ this implies that
$$\log \Phi_q(s)/\Phi_q(1)\ge C \log s,$$ so $\Phi$ tends to infinity when $s$ tends to infinity, which is a contradiction.  Similarly, if $\varpi$ is $>0$, then $$\log \Phi_q(s)/\Phi_q(1)\le C \log \varpi$$ and thus there is a limit $$\Phi_q(\varpi):=\lim_{t\to\varpi} \Phi_q(s)\le \Phi_q(1)e^{C \log \varpi}.$$ So $(\varphi, \Phi_q(\varpi))$ is the point of continuity of $(x,y)\to U(x,y)$, and thus $\Phi_q$ can be continued below $\varpi$. This implies that $\varpi=0$. Moreover we have $\lim_{s\to 0} \Phi_q(s)=+\infty$, which can be proved in a similar fashion.

\end{proof}

\begin{lemma}\label{interes}
For fixed $s>0$, $$\lim_{q\uparrow \infty } \Phi_q(s)=+\infty \ \  \ \text{and} \ \  \ \lim_{q\downarrow \frac{1}{\mathbf{c}} } \Phi_q(s)=\frac{1}{\mathbf{c}}.$$
\end{lemma}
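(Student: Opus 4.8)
The plan is to exploit two structural facts about the flow $\Phi'=U(s,\Phi)$ from \eqref{berna}. First, on the region $\{y>1/\mathbf{c}\}$ the field $U$ is of class $C^1$: the three candidate singular loci $y=0$, $y=1$, and $1+\mathbf{c}^2\gamma s^2y^3=0$ are all avoided there, since $\mathbf{c}<1$ forces $y>1/\mathbf{c}>1>0$ while $s>0$ keeps the last factor positive. Hence solutions through such points are unique and never cross. Second, $\Phi\equiv 1/\mathbf{c}$ is the constant (equilibrium) solution, located exactly at the zero of the factor $\mathbf{c}^2y^2-1$. Non-crossing of solutions, combined with $\Phi_q(1)=q$, shows that $q\mapsto\Phi_q(s)$ is strictly increasing for each fixed $s>0$; consequently both one-sided limits in the statement exist in $[1/\mathbf{c},+\infty]$, and it remains only to identify their values.

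For the first limit I would split into two cases. If $0<s\le 1$, then since $\Phi_q$ is decreasing on $(0,\infty)$ by Lemma~\ref{boundf}(4), we have $\Phi_q(s)\ge\Phi_q(1)=q$, so $\Phi_q(s)\to+\infty$ is immediate. The substantive case is $s_0>1$, where $\Phi_q(s_0)<q$ and the decay must be controlled. Here I would analyse the logarithmic derivative
$$-\frac{\Phi_q'(s)}{\Phi_q(s)}=\frac{(\mathbf{c}^2\Phi_q^2-1)(1+s^2\gamma\Phi_q^2)}{s(\Phi_q-1)(1+\mathbf{c}^2\gamma s^2\Phi_q^3)}.$$
As the argument $\Phi_q\to\infty$, uniformly for $s$ in the compact interval $[1,s_0]$, the right-hand side tends to $1/s$; thus there is a threshold $Y_0>1/\mathbf{c}$ for which it is at most $2/s$ whenever $\Phi_q\ge Y_0$. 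A bootstrap then keeps $\Phi_q\ge Y_0$ throughout $[1,s_0]$ once $q>Y_0 s_0^2$: on the maximal subinterval where $\Phi_q\ge Y_0$, integrating the bound gives $\Phi_q(s)\ge q\,s^{-2}\ge q\,s_0^{-2}>Y_0$, which prevents $\Phi_q$ from reaching $Y_0$ before $s_0$. Hence $\Phi_q(s_0)\ge q\,s_0^{-2}\to+\infty$.

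For the second limit I would invoke continuous dependence on initial data. Because $U$ is $C^1$ in a full neighbourhood of the graph $\{(s,1/\mathbf{c}):s>0\}$ of the equilibrium solution, on any compact interval $[s_1,s_2]\subset(0,\infty)$ containing $1$ the solution $\Phi_q$ converges uniformly to the constant solution $\Phi_{1/\mathbf{c}}\equiv 1/\mathbf{c}$ as $q\downarrow 1/\mathbf{c}$; global existence of $\Phi_q$ on $(0,\infty)$ from Lemma~\ref{boundf}(4) guarantees $\Phi_q$ is defined on the whole interval. In particular $\Phi_q(s)\to 1/\mathbf{c}$ for each fixed $s>0$, and monotonicity in $q$ confirms the convergence is from above.

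The main obstacle is the $s_0>1$ portion of the first limit: there $\Phi_q(s_0)<q$, so blow-up is not automatic and must be extracted from the precise leading behaviour $U(s,y)\sim -y/s$ as $y\to\infty$. Pinning down the threshold $Y_0$ uniformly in $s\in[1,s_0]$ and running the bootstrap carefully is where the real work lies; the second limit, by contrast, is a routine application of the continuous-dependence theorem once the smoothness of $U$ along the equilibrium line has been verified.
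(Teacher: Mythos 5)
Your proof is correct, but for the two substantive cases it takes a genuinely different route from the paper. For $\lim_{q\uparrow\infty}\Phi_q(s_0)=\infty$ with $s_0>1$, the paper argues softly: assuming $\Phi_q(s_0)<M$ for all $q$, it solves the ODE backwards from $(s_0,M)$ to produce a solution with $\Phi_{q_0}(1)=q_0$ and $\Phi_{q_0}(s_0)=M$, contradicting the uniform bound via uniqueness and non-crossing. You instead extract the exact leading behaviour $U(s,y)\sim -y/s$ as $y\to\infty$, uniformly on $[1,s_0]$, and run a bootstrap to get the explicit lower bound $\Phi_q(s_0)\ge q\,s_0^{-2}$; this is more work but yields a quantitative rate and avoids backward solvability of the flow. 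For the second limit, the paper uses the elementary squeeze $1/\mathbf{c}\le\Phi_q(s)\le\Phi_q(1)=q$ when $s>1$ (and only gestures at the case $s<1$ with ``as before this leads to contradiction''), whereas your appeal to continuous dependence on initial data near the equilibrium $\Phi\equiv 1/\mathbf{c}$ handles $s\le 1$ and $s>1$ uniformly and is arguably cleaner than the paper on the $s<1$ case; your verification that $U$ is $C^1$ near the line $y=1/\mathbf{c}>1$ is exactly the hypothesis needed. Both arguments share the monotonicity of $q\mapsto\Phi_q(s)$ from non-crossing of integral curves, which is also the paper's starting point.
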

\begin{proof}
We use Lemma~\ref{boundf}. If $s<1$, then $\Phi_q(s)\ge \Phi_q(1)=q$ and so $\lim_{q\uparrow \infty} \Phi_q(s)=\infty$. Since two integral curves never intersect, it follows that for fixed $s$, $q\to \Phi_q(s)$ is increasing. Assume that $s>1$ and $ \Phi_{q}(s)< M<\infty$ for every $q$. Then there exist a  solution $\Phi_0$ with $\Phi_0(s)=M$ and with $(0,\infty)$ as its maximal interval. But then, $\Phi_0(1)=q_0$ which is a contradiction with the fact that $\Phi_{q_0}(s)<M$. Thus $\lim_{q\uparrow \infty } \Phi_q(s)=+\infty$
as claimed.

Further if $s>1$, then $q=\Phi_q(1)> \Phi_q(s)\ge 1/\mathbf{c}$ and so $\lim_{q\downarrow \frac{1}{\mathbf{c}}} \Phi_q(s)=\frac{1}{\mathbf{c}}$. If for some $s<1$, $\lim_{q\downarrow \frac{1}{\mathbf{c}}}\Phi_q(s)>1/\mathbf{c}+\epsilon$, where $\epsilon>0$, then as before this leads to contradiction.

\end{proof}

Similarly by using Lemma~\ref{boundf}~2), we prove the following lemma:

\begin{lemma}\label{interes2}
For fixed $s>1$, $$\lim_{q\uparrow \frac{1}{\mathbf{c}}} \Phi_q(s)=\frac{1}{\mathbf{c}} \ \  \ \text{and} \ \  \ \lim_{q\downarrow 1 } \Phi_q(s)=1.$$

\end{lemma}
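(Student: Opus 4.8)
The plan is to follow the template of Lemma~\ref{interes}, exploiting the classification in Lemma~\ref{boundf}(2) together with the non-crossing property of integral curves of $\Phi'=U(s,\Phi)$. First I would record the basic monotonicity: since two distinct integral curves of the field $U$, which is locally Lipschitz away from the lines $y=1$ and $y=1/\mathbf{c}$, can never meet, for each fixed $s>1$ the map $q\mapsto\Phi_q(s)$ is strictly increasing on $(1,1/\mathbf{c})$. Consequently both one-sided limits $\lim_{q\uparrow 1/\mathbf{c}}\Phi_q(s)$ and $\lim_{q\downarrow 1}\Phi_q(s)$ exist, as a supremum and an infimum respectively, and by Lemma~\ref{boundf}(2) they lie in $[1,1/\mathbf{c}]$ because $q<\Phi_q(s)<1/\mathbf{c}$ for every admissible $q$.

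The first limit is then immediate by squeezing: from $q<\Phi_q(s)<1/\mathbf{c}$ and $q\uparrow 1/\mathbf{c}$ we obtain $\lim_{q\uparrow 1/\mathbf{c}}\Phi_q(s)=1/\mathbf{c}$. This is the exact counterpart of the convergence to the constant solution $\Phi\equiv 1/\mathbf{c}$ of Lemma~\ref{boundf}(3), and needs nothing beyond the two-sided bound.

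For the second limit the lower bound is again free, $\Phi_q(s)>q\downarrow 1$, so it remains to prove the matching upper bound. Writing $m:=\lim_{q\downarrow 1}\Phi_q(s)=\inf_{1<q<1/\mathbf{c}}\Phi_q(s)\ge 1$, I would argue by contradiction: suppose $m>1$. Then $(s,m)$ is an interior point of the strip $\{u>1,\ 1<y<1/\mathbf{c}\}$, hence a regular point of $U$, and there is a unique integral curve $\psi$ through it. Since $\psi(s)=m\in(1,1/\mathbf{c})$, the curve $\psi$ cannot be of type (1), (3) or (4), so it would have to be a type-(2) curve $\psi=\Phi_{q_0}$ with $q_0=\psi(1)\in(1,1/\mathbf{c})$; but then $\Phi_{q_0}(s)=m$, while monotonicity gives $\Phi_q(s)<m$ for every $q\in(1,q_0)$, contradicting $m=\inf_q\Phi_q(s)$. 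This would force $m=1$.

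The main obstacle is precisely the step that identifies $\psi$ as a genuine type-(2) curve, that is, that $\psi$ extends back to $s=1$ with $\psi(1)=q_0>1$. This requires controlling $U$ near the singular line $y=1$, where the factor $(\Phi-1)$ in the denominator of \eqref{berna} makes the field blow up: near $y=1$ one has $U(s,y)\sim \frac{(1-\mathbf{c}^2)(1+s^2\gamma)}{s(1+\mathbf{c}^2\gamma s^2)}\cdot\frac{1}{y-1}$, so integral curves meet the singular line with a square-root law $(\psi(u)-1)^2\approx 2\int_{\sigma}^{u}\frac{(1-\mathbf{c}^2)(1+v^2\gamma)}{v(1+\mathbf{c}^2\gamma v^2)}\,dv$. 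Thus a priori $\psi$ might leave the strip through a point $(\sigma,1)$ with $\sigma\ge 1$ rather than through the side $s=1$, and in the borderline case $\sigma=1$ the limiting curve $L=\lim_{q\downarrow 1}\Phi_q$ would emanate from the singular corner $(1,1)$ with $L(s)>1$. Ruling this out, equivalently showing that $L$ does not detach from $y=1$ for $s>1$, is the crux; it is here that the integral identity $\Phi-\log\Phi=q-\log q+\int_1^s\frac{(1-\mathbf{c}^2\Phi^2)(1+u^2\gamma\Phi^2)}{u(1+\mathbf{c}^2\gamma u^2\Phi^3)}\,du$ must be pushed to bound the rate at which $\Phi_q(s)\to 1$, and I expect this delicate singular-line analysis to be the genuine heart of the proof.
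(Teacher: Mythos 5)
Your treatment of the first limit is correct and is presumably what the paper intends: Lemma~\ref{boundf}(2) gives $q<\Phi_q(s)<\tfrac{1}{\mathbf{c}}$ for $s>1$, and the squeeze yields $\lim_{q\uparrow 1/\mathbf{c}}\Phi_q(s)=\tfrac{1}{\mathbf{c}}$. (The paper offers no written proof of this lemma beyond the remark that it is obtained ``similarly'' from Lemma~\ref{boundf}(2), so there is nothing further to compare against.)

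For the second limit you have correctly isolated the crux --- whether the limiting curve can detach from the singular line $y=1$ --- but you leave it unresolved, and in fact it resolves in the \emph{wrong} direction: the detachment you describe occurs, with $\sigma=1$, so the claimed limit $\lim_{q\downarrow 1}\Phi_q(s)=1$ fails. The integral identity you quote (the same one the paper uses in the proof of Lemma~\ref{boundf}(1)) reads, for $q\in(1,1/\mathbf{c})$ and $s>1$,
$$\Phi_q(s)-\log\Phi_q(s)=q-\log q+\int_1^s\frac{\bigl(1-\mathbf{c}^2\Phi_q(u)^2\bigr)\bigl(1+\gamma u^2\Phi_q(u)^2\bigr)}{u\bigl(1+\mathbf{c}^2\gamma u^2\Phi_q(u)^3\bigr)}\,du ,$$
and since $1<q\le\Phi_q(u)\le\Phi_q(s)<1/\mathbf{c}$ on $[1,s]$, the integrand is bounded below by $\frac{1-\mathbf{c}^2\Phi_q(s)^2}{s(1+\gamma s^2/\mathbf{c})}>0$, uniformly in $q$. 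Hence $\Phi_q(s)-\log\Phi_q(s)\ge 1+(s-1)\,\frac{1-\mathbf{c}^2\Phi_q(s)^2}{s(1+\gamma s^2/\mathbf{c})}$, which is incompatible with $\Phi_q(s)\to 1$ and forces $\liminf_{q\downarrow 1}\Phi_q(s)>1$. The monotone limit is the upper branch of the solution emanating from the corner $(1,1)$, behaving like $1+\bigl(2\int_1^s\frac{(1-\mathbf{c}^2)(1+\gamma u^2)}{u(1+\mathbf{c}^2\gamma u^2)}\,du\bigr)^{1/2}$ for $s$ near $1$, not the constant $1$. So the step you flag cannot be repaired: the second assertion of the lemma is false as stated (and the symmetric assertion $\lim_{q\uparrow 1}\Phi_q(s)=1$ in Lemma~\ref{interes1} fails for the same reason). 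This matters downstream, since it makes $q\mapsto H_q(r)$ jump across $q=1$, so the surjectivity claim of Lemma~\ref{hsol} needs a separate argument for target radii $R$ near $r$.
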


By using Lemma~\ref{boundf}~1) we get
\begin{lemma}\label{interes1}
For fixed $s>1$, $$\lim_{q\uparrow 1} \Phi_q(s)=1 \ \  \ \text{and} \ \  \ \lim_{q\downarrow 0 } \Phi_q(s)=0.$$

\end{lemma}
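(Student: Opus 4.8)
The plan is to follow the device already used for Lemma~\ref{interes} and Lemma~\ref{interes2}: two integral curves of \eqref{berna} can never cross, so for each fixed $s$ the map $q\mapsto\Phi_q(s)$ is monotone, and any limiting value that lands strictly inside the admissible strip must itself be a genuine solution, which produces a contradiction. Throughout I keep $0<\mathbf{c}<1$ and $q\in(0,1)$, the regime of Lemma~\ref{boundf}(1), in which $\Phi_q$ is a decreasing diffeomorphism of $(a(q),\infty)$ onto $(0,1)$; in particular for every $s>1$ one has $0<\Phi_q(s)<\Phi_q(1)=q<1$.

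The limit $\lim_{q\downarrow 0}\Phi_q(s)=0$ is immediate: for $s>1$ the curve is decreasing, so $0<\Phi_q(s)<q$, and letting $q\downarrow 0$ forces $\Phi_q(s)\to 0$ by squeezing. For the limit $\lim_{q\uparrow 1}\Phi_q(s)=1$ I would first note that the field $U(s,y)$ is locally Lipschitz on the open strip $\{s>0,\ 0<y<1\}$, since there its denominators $s(y-1)$ and $1+\mathbf{c}^2\gamma s^2 y^3$ do not vanish; hence distinct solutions do not meet and $q\mapsto\Phi_q(s)$ is strictly increasing. Consequently the limit $L(s):=\lim_{q\uparrow 1}\Phi_q(s)$ exists and satisfies $0<\Phi_{q_1}(s)\le L(s)\le 1$ for any fixed $q_1\in(0,1)$.

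It then remains to exclude $L(s_0)=\ell_0\in(0,1)$ for some $s_0>1$. If this held, the point $(s_0,\ell_0)$ would lie in the strip, hence on a unique integral curve; by Lemma~\ref{boundf} only the case-(1) solutions take values in $(0,1)$, so this curve is some $\Phi_{q^\ast}$ with $q^\ast=\Phi_{q^\ast}(1)\in(0,1)$. Choosing any $q$ with $q^\ast<q<1$, strict monotonicity gives $\Phi_q(s_0)>\Phi_{q^\ast}(s_0)=\ell_0=L(s_0)$, while $\Phi_q(s_0)\le L(s_0)$ because $L$ is the increasing limit; this contradiction shows $L(s_0)=1$.

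The one genuinely delicate point is the passage to the limit near the singular line $y=1$, where the right-hand side of \eqref{berna} blows up. This is why I set the contradiction at an interior value $\ell_0\in(0,1)$ and appeal to uniqueness there, rather than trying to take limits in the equation at $y=1$ directly; the non-crossing of the curves, exactly as in the proofs of Lemma~\ref{boundf} and Lemma~\ref{interes}, then carries the argument through.
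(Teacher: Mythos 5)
The second limit is fine and is exactly the paper's (implicit) argument: for $s>1$ and $q\in(0,1)$ the solution is decreasing, so $0<\Phi_q(s)<\Phi_q(1)=q$, and squeezing gives $\Phi_q(s)\to0$ as $q\downarrow0$. The trouble is in the first limit, at the step where you declare that the integral curve through $(s_0,\ell_0)$ ``is some $\Phi_{q^\ast}$ with $q^\ast=\Phi_{q^\ast}(1)\in(0,1)$.'' Lemma~\ref{boundf}(1) classifies only the solutions that pass through a point $(1,q)$; it does not say that every integral curve contained in the strip $\{0<y<1\}$ is one of these. For $0<\mathbf{c}<1$ the factor $\mathbf{c}^2y^2-1$ in the numerator of $U$ does not cancel the factor $y-1$ in the denominator (it does only when $\mathbf{c}=1$), so $U(s,y)\sim -K(s)/(1-y)$ with $K(s)>0$ as $y\uparrow 1$; equivalently $\frac{d}{ds}(1-\Phi)^2=2K(s)(1+o(1))$ near the line $y=1$. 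Hence an integral curve reaches $y=1$ in \emph{finite} backward $s$-time, and through every point $(a,1)$ with $a>0$ --- including $a\ge 1$ --- there emanates a solution $\Psi_a$ whose maximal interval is $(a,\infty)$ and which is therefore not any $\Phi_{q^\ast}$. The backward continuation of your curve through $(s_0,\ell_0)$ may hit $y=1$ before it reaches $s=1$, and then the contradiction never materializes.

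Worse, the gap does not look reparable: for $1<a<s_0$ the curve $\Psi_a$ satisfies $\Psi_a>\Phi_q$ on $(a,\infty)$ for every $q\in(0,1)$ (near $s=a^+$ one has $\Psi_a\to 1$ while $\Phi_q(a)<q<1$, and distinct curves cannot meet inside the open strip), so $\sup_{q<1}\Phi_q(s_0)\le\Psi_a(s_0)<1$. In fact $\lim_{q\uparrow 1}\Phi_q(s)$ should equal the value at $s$ of the solution emanating from the singular point $(1,1)$, which is strictly below $1$ for $s>1$. To be fair, the paper offers no proof of this lemma beyond the phrase ``by using Lemma~\ref{boundf}(1)'', and your attempt is an honest fleshing-out of the argument the paper does write for Lemma~\ref{interes}; that argument works there because in case (4) the barrier $y=1/\mathbf{c}$ is a genuine equilibrium, unreachable in finite time, whereas here $y=1$ is a singular line reached in finite time. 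What the downstream application in Lemma~\ref{hsol} really needs is continuity of $q\mapsto H_q(r)$ at $q=1$, and that is better extracted directly from the $H$-equation \eqref{hsecond}, which is perfectly regular at $H'=1$ (note that $H_1$ is not the identity: $H_1''(1)=\frac{(1-\mathbf{c}^2)(1+\gamma)}{1+\mathbf{c}^2\gamma}>0$).
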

The following lemma is just a reformulation of Theorem~\ref{comtotalradial}.
\begin{lemma}\label{hsol}
Let $0<\mathbf{c}<1$ and $q\in(0,1)\cup(1,\infty)$. Then there exist a solution $H=H_q(t)$ of the ODE equation
\begin{equation}\label{hsecond1}
\left\{
  \begin{array}{ll}
    H''(t)=\frac{H'(t)^2 \left(H(t)-\mathbf{c}^2 t H'(t)\right) \left(t+\gamma  H(t) H'(t)\right)}{t H(t) \left(H(t)+\mathbf{c}^2 \gamma  t H'(t)^3\right)} & \hbox{ } \\
    H_q(1)=1  \hbox{and } H'_q(1)=q.
  \end{array}
\right.
 \end{equation} Then $H_q$ has $[1,\infty)$ as its maximal interval, and $H_q$ is an increasing diffeomorphism of $[1,\infty)$ onto itself.  Moreover, for fixed $r>1$ and $R> 1$ there exists $q=q(r,R,\mathbf{c})$ so that $H_q[1,r]=[1,R]$. The function $q$ is continuous in its parameters.
\end{lemma}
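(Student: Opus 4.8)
The plan is to obtain Lemma~\ref{hsol} from the first-order reduction already set up: for each admissible slope $q$ I take the solution $\Phi_q$ of the auxiliary equation~\eqref{berna} furnished by Lemma~\ref{boundf}, reconstruct a solution $H_q$ of~\eqref{hsecond} by reversing the substitution~\eqref{prin}, and then vary $q$ to realize a prescribed pair $(r,R)$.

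For the existence and global-diffeomorphism statements (parts 1--2), fix $q\in(0,1)\cup(1,\infty)$ and let $\Phi_q$ solve~\eqref{berna} with $\Phi_q(1)=q$; its monotonicity, range and maximal interval are exactly the content of Lemma~\ref{boundf}. Reversing~\eqref{prin} converts the link between $s=H(t)/t$ and $\dot H$ into a separable equation for $t=t(s)$, which I integrate with the normalization $t(1)=1$ dictated by $H(1)=1$; setting $H=s\,t(s)$ and differentiating back shows that $H_q$ solves~\eqref{hsecond} with $H_q(1)=1$ and $\dot H_q(1)=q$, while $\dot H_q>0$ gives strict monotonicity. The substantive point is to run $s$ to the endpoint of the interval supplied by Lemma~\ref{boundf} and check that both $t$ and $H$ increase to $+\infty$: at that endpoint the reconstruction integral diverges, forcing $t\to\infty$, and then $H=s\,t\to\infty$ as well, so $H_q$ is a $C^2$ increasing diffeomorphism of $[1,\infty)$ onto itself. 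Uniqueness of the diffeomorphic solution is read from~\eqref{lcon}, and the three alternatives of Theorem~\ref{comtotalradial} are the three sign regimes of $c$ in~\eqref{lcon}, i.e.\ $q$ less than, equal to, or greater than $1/\mathbf{c}$.

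For parts 3--4, fix $r>1$ and put $\mathcal R(q)\bydef H_q(r)$. I would prove that $\mathcal R$ is continuous and strictly increasing: monotonicity follows because distinct integral curves of~\eqref{berna} never meet, so $q\mapsto\Phi_q(s)$ is increasing (as already exploited in Lemma~\ref{interes}), whence a larger initial slope produces a pointwise larger $\dot H$ and a larger image; continuity follows from continuous dependence of solutions of~\eqref{berna} on $q$ together with the limit Lemmas~\ref{interes},~\ref{interes2} and~\ref{interes1}. Those same limit lemmas give the boundary values $\mathcal R(q)\to 1$ as $q\downarrow 0$, $\mathcal R(q)\to\infty$ as $q\uparrow\infty$, and $\mathcal R(1/\mathbf{c})=r^{1/\mathbf{c}}$ coming from the explicit solution $H(t)=t^{1/\mathbf{c}}$. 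The intermediate value theorem then makes $\mathcal R$ a bijection of the admissible $q$-set onto $(1,\infty)$, which defines $q(r,R,\mathbf{c})=\mathcal R^{-1}(R)$; continuity of $q$ in its parameters follows from strict monotonicity and continuous dependence.

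I expect the main obstacle to be the borderline $q=1$, equivalently the target $R=r$ separating the concave and convex regimes, which is precisely where the substitution~\eqref{prin} degenerates because $\Phi=1$ is a singular line for~\eqref{berna}. There the factor $\Phi_q-1$ in the reconstruction vanishes, and one must control the resulting (square-root type) singularity to show that $\mathcal R(q)\to r$ from both sides, so that the ranges coming from $(0,1)$ and from $(1,\infty)$ join to cover all of $(1,\infty)$ without a gap. A secondary difficulty of the same nature is confirming the divergence of the reconstruction integral at the remaining endpoints of the $\Phi_q$-intervals in Lemma~\ref{boundf}, since this is exactly what guarantees that the maximal interval of $H_q$ is all of $[1,\infty)$ rather than a proper subinterval.
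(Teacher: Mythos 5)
Your overall strategy is the one the paper follows: take $\Phi_q$ from Lemma~\ref{boundf}, rebuild $H_q$ from the first-order reduction, show its maximal interval is $[1,\infty)$, and then shoot in $q$ using Lemmas~\ref{interes}--\ref{interes1} together with the intermediate value theorem. The paper's execution differs only in one technical respect: it does not invert to a separable equation for $t(s)$, but applies Picard--Lindel\"of directly to the problem $\dot H(t)=\frac{H(t)}{t}\Phi_q\bigl(\frac{H(t)}{t}\bigr)$, $H(1)=1$, and obtains globality by a continuation argument (boundedness of $\Phi_q$ gives $H_q'(t)/H_q(t)\le C/t$, so $H_q$ stays finite at any finite $\omega$ and the solution extends past $\omega$). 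Your remarks about the borderline $R=r$ and about strict monotonicity of $q\mapsto H_q(r)$ via non-crossing of integral curves are reasonable supplements; the paper's treatment of the shooting step is in fact terser than yours (it only records $H_q(r)\to 1$ as $q\downarrow 0$, $H_q(r)\to r^{1/\mathbf{c}}$ as $q\uparrow 1/\mathbf{c}$, and invokes Lemma~\ref{interes} for $R>r^{1/\mathbf{c}}$).

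The one genuine gap is exactly at the point you call a ``secondary difficulty'': the claim that the reconstruction integral diverges at the endpoint of the $\Phi_q$-interval, forcing $t\to\infty$. For $q>1$ this is fine, since there $s=H/t$ increases to $\infty$ and $\Phi_q(s)-1$ stays bounded away from $0$ (it tends to $1/\mathbf{c}-1>0$), so $\log t=\int_1^s\frac{d\sigma}{\sigma(\Phi_q(\sigma)-1)}$ diverges. But for $q\in(0,1)$ the trajectory has $s=H/t$ \emph{decreasing} from $1$ toward the left endpoint $a(q)$ of the domain of $\Phi_q$, where $\Phi_q\to 1$. Near that endpoint, \eqref{berna} gives $\Phi_q'\sim D/(\Phi_q-1)$ with $D>0$, hence $1-\Phi_q(s)\sim\sqrt{2D\,(s-a(q))}$, and the integral $\int_{a(q)}\frac{d\sigma}{\sigma(1-\Phi_q(\sigma))}$ \emph{converges}. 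So along your $s$-parametrization $t$ reaches only a finite value $t_*$; this is precisely where $t\dot H-H$ vanishes, $\eta=H/t$ ceases to be monotone, and the substitution \eqref{prin} degenerates, so your integral criterion cannot conclude that the maximal interval is $[1,\infty)$ in this case. You would need a separate argument to continue $H_q$ past $t_*$ as a solution of the second-order equation \eqref{hsecond} (which is regular there) and to control it afterwards, which the $s$-variable cannot provide since $s$ is no longer a coordinate along the trajectory. This is the step the paper's $t$-variable continuation argument is designed to absorb, so your route needs to be repaired or replaced at exactly this point for $q\in(0,1)$.
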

\begin{proof}[Proof of Lemma~\ref{hsol}] Let $\Phi_q$ be the solution of \eqref{berna} obtained by Lemma~\ref{boundf}, and observe the boundary value problem
\begin{equation}\label{firstq}\left\{
  \begin{array}{ll}
    \dot H(t)=\frac{H(t)}{t}\Phi_q\left(\frac{H(t)}{t}\right), & \hbox{}\\
    H(1)=1. &
  \end{array}
\right.\end{equation}

Using the Picard-Lindel\"of theorem, we observe that for fixed
$q$ there is  exactly one smooth solution
$H=H_{q}$ of the problem \eqref{firstq}.
Let $[1,\omega)$ be the maximal interval of $H_q$. If $\omega<\infty$ and $q\in(0,1)\cup (1,1/\mathbf{c})$, then by Lemma~\ref{boundf}, we have $$\frac{H_q'(t)}{H_q(t)}\le \frac{C}{t},$$ and by integrating in $[1,\omega)$ we obtain $$\log(H_q(\omega-0))\le C \log\omega.$$ So $(\omega,H_q(\omega))$ is the point of continuity of the function $$(x,y)\mapsto \frac{y}{x}\Phi_q\left(\frac{y}{x}\right).$$ Thus we can continue the solution above $\omega$, which is a contradiction. Thus $\omega=\infty$ if $q\in(0,1)\cup (1,1/\mathbf{c})$. If $q>1/\mathbf{c}$, and $[1,\omega)$ is the maximal interval of $H_q$ with $\omega<\infty$, then if $H_q(\omega)< \infty,$ then $H_q$ can be continued above $\omega$. If $H_q(\omega)=\infty$, by Lemma~\ref{boundf}, $$\lim_{s\to\omega}\Phi_q\left(\frac{H_q(s)}{s}\right)=\frac{1}{\mathbf{c}}.$$ This again leads to a contradiction.
%Further $$\lim_{p\downarrow 0} \Phi_p(r)=\infty$$ and $$\lim_{p\to +\infty} \Phi_p(r)=0.$$
Moreover, if $1\le t\le r$, then from Lemma~\ref{boundf}, 1)  $$(\log H_q(t))'=\frac{1}{t}\Phi_q\left(\frac{H_q(t)}{t}\right)$$ tends uniformly  to zero when $q\downarrow 0$. By integrating in $[1,r]$ we conclude that $\log H_q(t)$ tends to zero for every fixed  $1\le t\le r$. So $H_q(r)$ tends to $1$ as $q\downarrow 0$. Further $$\lim_{q \uparrow 1} H_q(r).$$
Similarly we show that $H_q(r)$ tends to $r^{1/\mathbf{c}}$ when $q \uparrow \frac{1}{\mathbf{c}}$.

From Lemma~\ref{interes}, we obtain that for every $R> r^{1/\mathbf{c}}$ there is $q$ so that $H_q[1,r]=[1,R]$.

\end{proof}
\begin{proposition}
Let $0<\mathbf{c}<1$ and assume that $H=H_q:[1,r]\to [1,R]$ is a diffeomorphic solution of \eqref{hsecond} with $0<q<1$. Then $H$ is a convex function.
\end{proposition}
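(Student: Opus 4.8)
The plan is to read off the sign of $H''$ directly from the right-hand side of \eqref{hsecond}, since convexity is exactly the assertion $H''\ge 0$. Because $H$ is an increasing diffeomorphism of $[1,r]$ onto $[1,R]$, both $H>0$ and $H'>0$ hold throughout, and together with $t\ge 1$, $\gamma>0$, $\mathbf{c}>0$ this forces the denominator $t H(t)\big(H(t)+\mathbf{c}^2\gamma t H'(t)^3\big)$ to be strictly positive. In the numerator the factors $H'(t)^2$ and $\big(t+\gamma H(t)H'(t)\big)$ are likewise strictly positive, so the sign of $H''(t)$ coincides with the sign of the single remaining factor $H(t)-\mathbf{c}^2 t H'(t)$. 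Thus the whole statement reduces to the pointwise inequality $H(t)-\mathbf{c}^2 t H'(t)>0$ on $[1,r]$.

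Next I would translate this quantity into the language of the auxiliary function $\Phi_q$. By construction \eqref{firstq} the solution satisfies $H'(t)=\frac{H(t)}{t}\Phi_q\!\left(\frac{H(t)}{t}\right)$, so writing $s=H(t)/t$ one has $\Phi_q(s)=\frac{tH'(t)}{H(t)}$. Substituting this gives $H(t)-\mathbf{c}^2 t H'(t)=H(t)\big(1-\mathbf{c}^2\Phi_q(s)\big)$, and since $H(t)>0$ the sign is controlled entirely by $1-\mathbf{c}^2\Phi_q(s)$.

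Finally I would invoke the range information from Lemma~\ref{boundf}~(1): for $0<q<1$ the solution $\Phi_q$ takes values in $(0,1)$. Combined with the hypothesis $0<\mathbf{c}<1$, so that $\mathbf{c}^2<1$, this yields $\mathbf{c}^2\Phi_q(s)<\Phi_q(s)<1$, hence $1-\mathbf{c}^2\Phi_q(s)>0$. Therefore $H(t)-\mathbf{c}^2 t H'(t)>0$, and consequently $H''(t)>0$ for every $t\in[1,r]$, so $H$ is (strictly) convex.

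The only genuine points requiring care are the correct identification of $\Phi_q(s)$ with the elasticity ratio $tH'(t)/H(t)$ through \eqref{firstq}, and the verification that the range of $\Phi_q$ established in Lemma~\ref{boundf} indeed lies in $(0,1)$ precisely in the regime $0<q<1$. Once these are secured, the convexity follows immediately from the sign bookkeeping above, so I do not expect any substantial analytic obstacle beyond keeping the bookkeeping honest.
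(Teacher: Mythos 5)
Your proof is correct, and its skeleton coincides with the paper's: both arguments note that in \eqref{hsecond} the factors $H'(t)^2$, $t+\gamma H(t)H'(t)$, $tH(t)$ and $H(t)+\mathbf{c}^2\gamma tH'(t)^3$ are strictly positive for an increasing diffeomorphism, so the sign of $H''$ is the sign of $H(t)-\mathbf{c}^2tH'(t)$, and convexity reduces to showing that this quantity is positive. Where you genuinely diverge is in how that positivity is obtained. The paper uses the first integral \eqref{lcon}: the quantity $t\mathbf{c}\dot H-H$ keeps the sign of its initial value $c=\mathbf{c}q-1<0$, giving $t\dot H/H<1/\mathbf{c}$, and then $\mathbf{c}^2<\mathbf{c}$ (valid since $0<\mathbf{c}<1$ and $\dot H>0$) upgrades this to $t\mathbf{c}^2\dot H<H$. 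You instead use the substitution $\Phi_q(s)=tH'(t)/H(t)$ with $s=H(t)/t$ coming from \eqref{firstq}, together with the range statement of Lemma~\ref{boundf}(1), to get the sharper bound $t\dot H/H=\Phi_q(s)<1$, hence $\mathbf{c}^2\Phi_q(s)<1$. Either bound suffices, since all that is needed is $t\dot H/H<1/\mathbf{c}^2$. Both routes lean on facts already established earlier in the paper, so neither is more elementary than the other; yours avoids the exponential representation \eqref{lcon} but imports the full qualitative description of $\Phi_q$ from Lemma~\ref{boundf}, whereas the paper's is self-contained modulo \eqref{lcon}. The one point you rightly flag as needing care --- that $s=H(t)/t$ stays inside the maximal domain $(a(q),\infty)$ of $\Phi_q$ for all $t\in[1,r]$ --- is indeed covered by the hypothesis that $H_q$ is a diffeomorphic solution on all of $[1,r]$, since $H_q$ is defined there precisely through \eqref{firstq}; so there is no gap.
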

\begin{proof}
In view of \eqref{lcon}, we have $t\mathbf{c}\dot H-H=c=\mathbf{c} q-1<0. $ It follows that $$t\mathbf{c}^2\dot H-H\le \mathbf{c} q-1<0.$$ According to \eqref{hsecond}, $H''>0$ which implies that $H$ is convex.
\end{proof}
\begin{remark}
If $\mathbf{c}< 1$, then the identity  is  not a minimizer of total combined functional. It follows that the radial solution of the equation \eqref{hsecond} that maps the interval $[1,r]$ onto itself is convex. The graphic of such a solution is given in the following figure.
\begin{figure}[htp]\label{poi}
\centering
\includegraphics{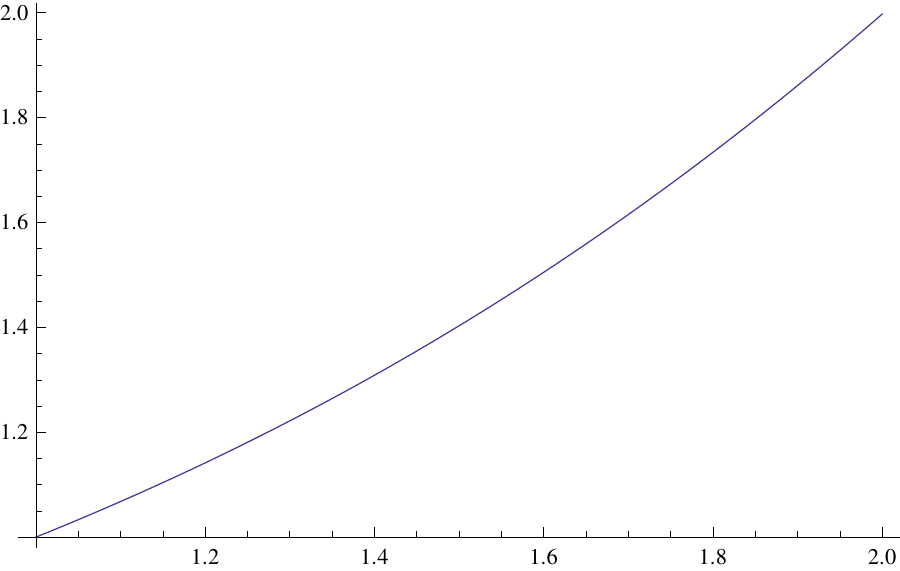}
\caption{The solution $H_q$ that maps the interval $[1,2]$ onto itself for $\mathbf{c}=1/2$ and $\gamma=1$ with $q<1$.}
\end{figure}
\end{remark}

\begin{proposition}\label{propo}
a) For $r>1$ and $R> r$, there is $\mathbf{c}_\circ=\mathbf{c}_\circ(r,R)<1$ such that the diffeomorphic solution $H=H_q:[1,r]\to [1,R]$ of \eqref{hsecond} obtained in Lemma~\ref{hsol} satisfies the {\bf concavity} condition \begin{equation}\label{gamasquare}\mathbf{c}^2t \dot H(t)\ge H(t), \ \ \ 1\le t \le r\end{equation} for every $ \mathbf{c}_\circ\le \mathbf{c}\le 1$. In other words, in view of \eqref{hsecond} the solution is concave.

b) In the opposite direction assume that $\mathbf{c}<1$ and $q>1/\mathbf{c}^2$.  If the condition \eqref{gamasquare} is satisfied then  $$r\le \frac{\log (q \mathbf{c}^2)}{\log (1/\mathbf{c}^2-1)}$$ and
$$r^{1/\mathbf{c}^2}\le R \le r q \mathbf{c}^2.$$
Moreover if for some fixed $\mathbf{c}<1$ the concavity condition is satisfied in $[1,r]$, then we have $$r< \frac{1}{1-\mathbf{c}^2+\mathbf{c}^2/R}.$$
\end{proposition}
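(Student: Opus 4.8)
The plan is to pass from the second–order equation \eqref{hsecond} to the first–order picture of Lemma~\ref{hsol} and read the concavity condition \eqref{gamasquare} off the function $\Phi_q$. Since $H>0$ and $\dot H>0$, in \eqref{hsecond} the factors $t+\gamma H\dot H$ and $H+\mathbf c^2\gamma t\dot H^3$ are positive, so $\operatorname{sign}H''=\operatorname{sign}(H-\mathbf c^2 t\dot H)$ and \eqref{gamasquare} is exactly the statement $H''\le 0$. Using \eqref{firstq}, with $s=H(t)/t$ one has $\dot H=s\,\Phi_q(s)$ and hence $t\dot H/H=\Phi_q(s)$, so \eqref{gamasquare} is equivalent to
\[
\Phi_q(s)\ge \frac1{\mathbf c^2}\qquad\text{for all } s=\tfrac{H(t)}{t},\ t\in[1,r].
\]
Because $q>1/\mathbf c$ in the relevant regime, Lemma~\ref{boundf}(4) says $\Phi_q$ is strictly decreasing, while $\dot s=s(\Phi_q(s)-1)/t>0$ shows $s$ increases from $1$ to $R/r$; thus the binding instance sits at the right endpoint, and \eqref{gamasquare} holds on all of $[1,r]$ if and only if $\Phi_q(R/r)\ge 1/\mathbf c^2$.

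For part (a) I would argue by continuity in $\mathbf c$ up to the Iwaniec--Onninen value $\mathbf c=1$. Fix $r>1$, $R>r$. Since $r^{1/\mathbf c}\to r<R$ as $\mathbf c\uparrow 1$, for $\mathbf c$ close to $1$ we have $R>r^{1/\mathbf c}$, so Lemma~\ref{hsol} furnishes a $q=q(\mathbf c)>1/\mathbf c$ with $H_q:[1,r]\to[1,R]$, depending continuously on $\mathbf c$. At $\mathbf c=1$ equation \eqref{lcon} shows that $t\dot H-H$ keeps the constant sign of its value $q-1$ at $t=1$, and here $q>1$ because $R>r=r^{1/\mathbf c}$; hence $t\dot H-H>0$ throughout, so the limiting profile satisfies $\Phi^{(1)}(R/r)=r\dot H(r)/R>1$ strictly (here $\Phi^{(1)}$ denotes the $\mathbf c=1$ solution). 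By continuous dependence of the solution of \eqref{berna} on its parameters, $\Phi_{q(\mathbf c)}(R/r)\to\Phi^{(1)}(R/r)>1=\lim_{\mathbf c\to1}\mathbf c^{-2}$; hence the continuous function $\mathbf c\mapsto \Phi_{q(\mathbf c)}(R/r)-\mathbf c^{-2}$ is positive on an interval $[\mathbf c_\circ,1]$, which is precisely \eqref{gamasquare} for all those $\mathbf c$.

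For part (b) I would integrate the two inequalities encoded in \eqref{gamasquare}. First, \eqref{gamasquare} forces $H''\le0$, so $\dot H$ is decreasing and $\dot H\le\dot H(1)=q$; combined with $H/t\le\mathbf c^2\dot H$ (a rearrangement of \eqref{gamasquare}) this gives $H(t)/t\le\mathbf c^2 q$, whence $R\le\mathbf c^2 q\,r$. Second, \eqref{gamasquare} reads $(\log H)'\ge 1/(\mathbf c^2 t)$; integrating on $[1,t]$ yields $H(t)\ge t^{1/\mathbf c^2}$ and in particular $R\ge r^{1/\mathbf c^2}$, giving the two-sided bound $r^{1/\mathbf c^2}\le R\le \mathbf c^2 q\,r$. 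Chaining the two produces $r^{1/\mathbf c^2}\le \mathbf c^2 q\,r$, i.e.\ $(\mathbf c^{-2}-1)\log r\le \log(\mathbf c^2 q)$, the asserted bound on $r$. Finally, using again that $\dot H$ is decreasing, $R=1+\int_1^r\dot H\ge 1+(r-1)\dot H(r)$, and evaluating \eqref{gamasquare} at $t=r$ gives $\dot H(r)\ge R/(\mathbf c^2 r)$; substituting and rearranging yields $R-1\ge (r-1)R/(\mathbf c^2 r)$, i.e.\ $r\bigl(1-\mathbf c^2+\mathbf c^2/R\bigr)\le 1$, which is the last inequality.

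The main obstacle is the limit argument in part (a): one must verify that the solutions $\Phi_{q(\mathbf c)}$ of \eqref{berna} converge, uniformly on the fixed compact interval $[1,R/r]$, to the $\mathbf c=1$ solution, and in particular that they stay uniformly bounded away from the singular line $y=1$ of the field $U(s,y)$. This is where the strictness $\Phi^{(1)}(R/r)>1$ is essential: it provides, via continuous dependence, a uniform lower bound $\Phi_{q(\mathbf c)}\ge 1+\delta$ on $[1,R/r]$ for $\mathbf c$ near $1$, legitimising both the convergence and the monotonicity from Lemma~\ref{boundf}(4) used to locate the binding endpoint. Everything else reduces to the elementary integration of \eqref{gamasquare} carried out in part (b).
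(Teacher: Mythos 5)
Your proof is correct and follows essentially the same strategy as the paper: part (a) by letting $\mathbf{c}\uparrow 1$ and exploiting that at $\mathbf{c}=1$ equation \eqref{lcon} forces $t\dot H-H$ to keep one sign, which must be positive because $H(t)/t$ increases from $1$ to $R/r>1$; part (b) by elementary integration of the differential inequality \eqref{gamasquare} together with the monotonicity of $\dot H$. Two remarks. First, in (a) the paper works directly with the uniform convergence of $H_{q,\mathbf{c}}-\mathbf{c}^2tH'_{q,\mathbf{c}}$ to the (sign-definite) quantity $H_{q,1}-tH'_{q,1}$ on all of $[1,r]$, whereas you reduce the verification to the single endpoint value $\Phi_{q(\mathbf{c})}(R/r)$ via the monotonicity of $\Phi_q$ from Lemma~\ref{boundf}(4); this is a genuine simplification (it localizes the continuous-dependence issue to one point), and your derivation of $R\le rq\mathbf{c}^2$ from $\dot H\le\dot H(1)=q$ and $H/t\le\mathbf{c}^2\dot H$ is cleaner and more complete than the paper's argument through the smallest zero $t_\circ$ of $H''$ (which does not exist under strict concavity). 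Second, a discrepancy you gloss over: chaining $r^{1/\mathbf{c}^2}\le R\le rq\mathbf{c}^2$ yields $\bigl(1/\mathbf{c}^2-1\bigr)\log r\le\log(q\mathbf{c}^2)$, i.e.\ a bound on $\log r$, not the printed inequality $r\le\log(q\mathbf{c}^2)/\log(1/\mathbf{c}^2-1)$, whose right-hand side can even be negative when $\mathbf{c}^2>1/2$; the paper's own proof never derives the printed form either, so this is almost certainly a typo in the statement, but you should say explicitly which inequality you are actually proving rather than calling yours "the asserted bound on $r$".
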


\begin{figure}[htp]
\centering
\includegraphics{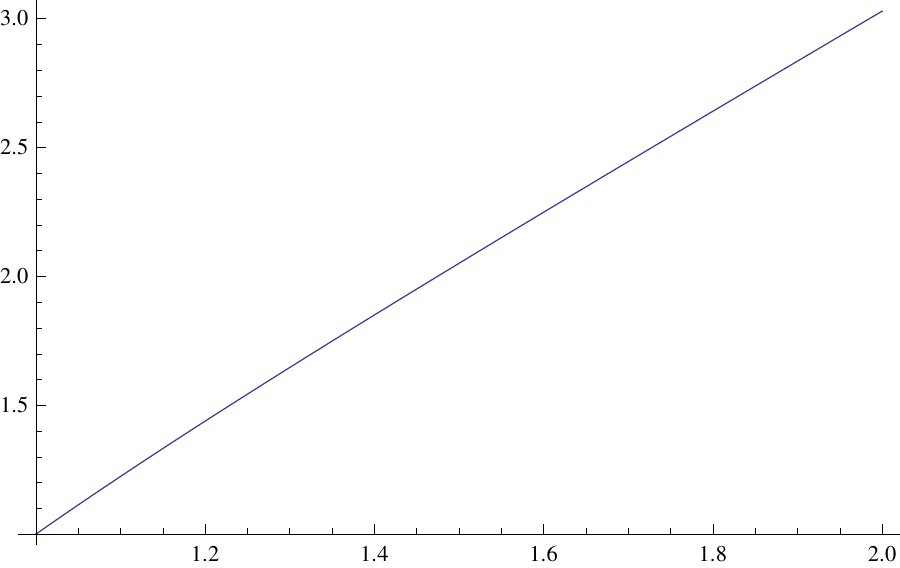}
\caption{The concave graphic of the solution $H$ for, $\mathbf{c}=9/10$, $\gamma=1$ that maps $[1,2]$ onto $[1,3]$}
\end{figure}

\begin{proof}

For fixed $r>1$ and $R>r$, there is $q=q(r,R,\mathbf{c})$ so that $H_{q,\mathbf{c}}[1,r]=[1,R]$ is the diffeomorphic solution of \eqref{hsecond}. Moreover   $$H{q,\mathbf{c}}(t)-\mathbf{c}^2 t H_{q,\mathbf{c}}'(t)\to H_{q,1}(t)- t H_{q,1}'(t)$$ uniformly on $[1,r]$ as $\mathbf{c}\uparrow 1$. From \eqref{lcon} we have $$H_{q,1}(t)- t H_{q,1}'(t)\equiv c ,  \ \ t\in [1,r]. $$ Now since $R>r$, the function  $P(t)=\frac{H(t)}{t}$ satisfies the condition $1=P(1)<P(r)=R/r$, and therefore
$$0<\partial_t \frac{H_{q,1}(t)}{t}=-\frac{(H_{q,1}(t)- t H_{q,1}'(t))}{t^2}=\frac{-c}{t^2}.$$ Thus $c<0$.  Thus there is $\mathbf{c}<1$ so that $$H{q,\mathbf{c}}(t)-\mathbf{c}^2 t H_{q,\mathbf{c}}'(t)<0, \ \ \ 1\le t \le r.$$
Further by integrating the inequality $$\frac{H'(t)}{H(t)}\ge \frac{1}{\mathbf{c}^2 t}$$ in $[1,r]$ we obtain $$R=H(r)>r^{1/\mathbf{c}^2}.$$
 Further let $V(s)\bydef s\Phi_q(s),$  where $\Phi_q$ is the solution of \eqref{berna}. Then \begin{equation}\label{vp}V'(s)= \frac{V(s)^2 \left(-s+\mathbf{c}^2 V(s)\right) (1+s V(s))}{s (s-V(s)) \left(s+\mathbf{c}^2 V(s)^3\right)},\end{equation} and $$H'(t)=V\left(\frac{H(t)}{t}\right) .$$ It follows that $$H''(t)=V'\left(\frac{H(t)}{t}\right)\frac{tH'- H}{t^2}.$$ Now if $V=V_q$ is a solution of \eqref{vp} with $V(1)=q>1$, then $H'(1)=q>1$. Further if $t_\circ>1$ is the smallest zero of  $H''(t_\circ)$ then $s_\circ=H(t)/t$ is the smallest zero of $V'(s)$. Then $$V(s_\circ)=\frac{s_\circ}{\mathbf{c}^2}.$$ Further $V'(1)<0$, and thus $V$ is decreasing in $[1,s_\circ]$. Thus $$ q=V(1)> V(s)=\frac{s_\circ}{\mathbf{c}^2}.$$
So $s_\circ< q \mathbf{c}^2.$ Therefore $H(t_\circ)< t_\circ q \mathbf{c}^2.$

 From
$$R-1=H(r)-H(1)=\int_1^r V\left(\frac{H(t)}{t}\right)dt,$$ and the assumption that $V$ is decreasing and $\frac{H(t)}{t}$ is increasing we obtain that $$\frac{R-1}{r-1}=\frac{H(r)-H(1)}{r-1}\ge V \left(\frac{H(r)}{r}\right)> \frac{H(r)}{r}\frac{1}{\mathbf{c}^2}=\frac{R}{r}\frac{1}{\mathbf{c}^2}.$$ Thus $$\mathbf{c}^2> \frac{R}{r}\frac{r-1}{R-1}$$ and hence $r< \frac{1}{1-\mathbf{c}^2+\mathbf{c}^2/R}.$ This finishes the proof.
\end{proof}
 \section{Proof of Theorem~\ref{comtotal}} We have to estimate the total combined energy $$\mathfrak{E}[h]= \alpha\mathcal{E}[\mathbf{a},\mathbf{b}][h]+\beta\mathcal{E}[\mathbf{b},\mathbf{a}][h^{-1}]$$ for $h\in F(\A,\B)$. Here we have $0<\mathbf{c}<1$.

We make use of the following simple inequalities
\begin{equation}\label{general}\begin{split}\mathbf{a}^2 |h_N|^2+\mathbf{b}^2|h_T|^2&\ge (\mathbf{a}^2-\mathbf{b}^2 a^2)|h_N|^2 +(\mathbf{b}^2-\mathbf{a}^2 b^2) |h_T|^2\\& + 2 a b \mathbf{a}\mathbf{b}  |h_T| |h_N|\end{split}\end{equation}
\begin{equation}\label{general1}\begin{split}\frac{\mathbf{a}^2 |h_N|^2+\mathbf{b}^2|h_T|^2}{J(h,z)}
&\ge \frac{(\mathbf{a}^2-\mathbf{b}^2 a_\ast^2)|h_N|^2 +(\mathbf{b}^2-\mathbf{a}^2 b_\ast^2) |h_T|^2 }{J(h,z)}\\&+\frac{ 2 a_\ast b_\ast \mathbf{a}\mathbf{b}  |h_T| |h_N|}{J(h,z)}.\end{split}\end{equation}
The equality is attained in \eqref{general}  if and only if
\begin{equation}\label{mamire}
\mathbf{b}a|h_N|=\mathbf{a}b |h_T|
\end{equation}
and in \eqref{general1} if and only if
\begin{equation}\label{mamire}
\mathbf{b}a_\ast|h_N|=\mathbf{a}b_\ast |h_T|.
\end{equation}
Then
\begin{equation}\label{split}\begin{split}
\mathfrak{D}[h]&\bydef \alpha(\mathbf{a}^2|h_N|^2 +\mathbf{b}^2|h_T|^2)+\beta\frac{\mathbf{a}^2|h_N|^2 +\mathbf{b}^2|h_T|^2}{J(z,h)}\\
&\ge \alpha(\mathbf{a}^2-\mathbf{b}^2 a^2)(2|h|_N-A)A
\\&+\alpha(\mathbf{b}^2-\mathbf{a}^2 b^2)(2|h|\Im \frac{h_T}{h}-B)B
\\&+ 2\mathbf{a}\mathbf{b} \alpha ab J(z,h)
\\&+ \beta(\mathbf{a}^2-\mathbf{b}^2a_\ast^2)(2|h|_N-A_\ast J(z,h))A_*
\\&+\beta(\mathbf{b}^2-\mathbf{a}^2 b_\ast^2)(2|h|\Im \frac{h_T}{h}-B_\ast J(z,h))B_\ast
\\&+2\mathbf{a}\mathbf{b} na_\ast b_\ast.
\end{split}
\end{equation}

The equality holds at a given point $z$ if and only if

\begin{equation}\label{247}\mathbf{b}a |h_N | = \mathbf{a} b |h_T |, \ \ \  \mathbf{b} a_* |h_N | = \mathbf{a}b_* |h_T | \end{equation}

\begin{equation}\label{248}A = |h_N |\text{ except for } a = \frac{\mathbf{a}}{\mathbf{b}},  A_* = \frac{1}{|h_T |} \text{ except for } a_* = \frac{\mathbf{a}}{\mathbf{b}}\end{equation}

\begin{equation}\label{249}B = |h_T |\text{ except for }b  = \frac{\mathbf{b}}{\mathbf{a}}, B_* = \frac{1}{|h_N |}\text{ except for } b_*  = \frac{\mathbf{b}}{\mathbf{a}} \end{equation}

\begin{equation}\label{250}h \overline{h_N}\in  \R_+ \text{ and }  \overline{ h} h_T \in  i
\R_+ \end{equation}
\subsection{Proof of balanced case: $R=r^{1/\mathbf{c}}$}
By taking $a=a_\ast=\mathbf{c}$ and $b=b_\ast=1/\mathbf{c}$, the inequality \eqref{split} reduces to the following simple inequality

\[\begin{split}
\mathfrak{D}[h]&=\alpha(\mathbf{a}^2|h_N|^2 +\mathbf{b}^2|h_T|^2)+\beta\frac{\mathbf{a}^2|h_N|^2 +\mathbf{b}^2|h_T|^2}{J(z,h)}
\\&\ge 2\mathbf{a}\mathbf{b} \alpha ab J(z,h)+2\mathbf{a}\mathbf{b} \beta a_\ast b_\ast\\&=
2\mathbf{a}\mathbf{b} \alpha  J(z,h)+2\mathbf{a}\mathbf{b} \beta
\end{split}
\]
because  $$ab=a_*b_*=\frac{\mathbf{a}}{\mathbf{b}}\frac{\mathbf{b}}{\mathbf{a}}=1.$$
So \[\begin{split}\int_{\A}\mathfrak{D}[h]dz&\ge 2\mathbf{a}\mathbf{b}  \alpha \int_{\A}  J(z,h)dz+2\mathbf{a}\mathbf{b}  \beta \int_{\A}  dz
\\&=2\mathbf{a}\mathbf{b}ab \alpha\pi (R^2-1) +2\mathbf{a}\mathbf{b} \beta \pi (r^2-1)\\&=2\mathbf{a}\mathbf{b} \alpha\pi (r^{2/\mathbf{c}}-1)+ 2\mathbf{a}\mathbf{b} \beta \pi (r^2-1)\\&=\int_{\A}\mathfrak{D}[h_\circ]dz, \end{split}\] where $h_\circ (z)= |z|^{1/\mathbf{c}-1} z$. Namely in this special case
$$\mathbf{a}^2|h_N|^2 +\mathbf{b}^2|h_T|^2=2\mathbf{a}\mathbf{b} |h_N|\cdot |h_T|.$$

\begin{figure}[htp]\label{poi}
\centering
\includegraphics{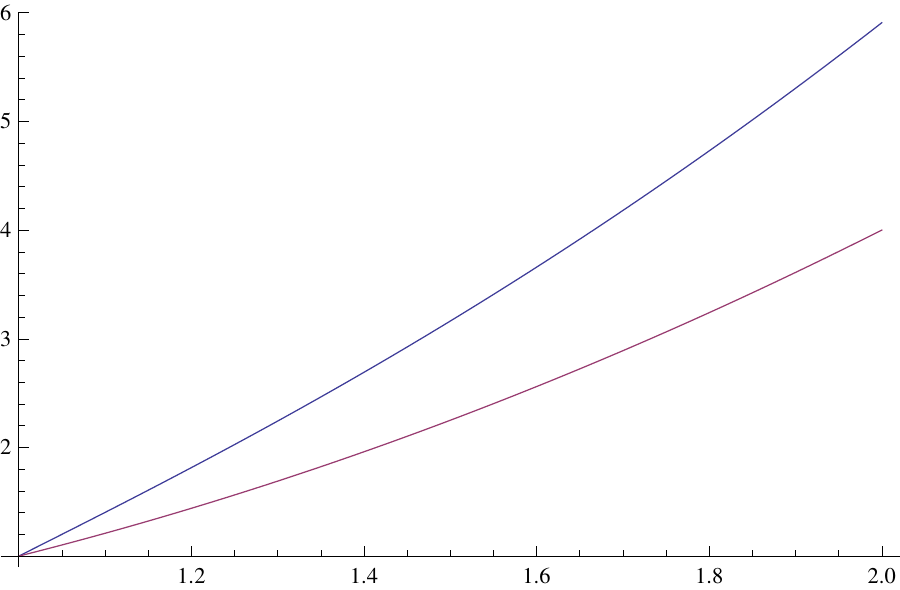}
\caption{These two curves are graphs of the solutions with the initial conditions $H'(1)=1/\mathbf{c}$ and $H'(1)=1/\mathbf{c}^2$ respectively, and $1<t<2$.}
\end{figure}

\subsection{Proof of concavity case: $H_q''(t)\le 0$ ($\mathbf{c}<1$).}
Take
\begin{equation}\label{bbsg}b=\frac{\mathbf{b}}{\mathbf{a}}=b_\ast=\frac{1}{\mathbf{c}}>1.\end{equation}

Thus

\[\begin{split}
\mathfrak{D}[h]
&\ge \alpha(\mathbf{a}^2-\mathbf{b}^2 a^2)(2|h|_N-A)A
\\&+ 2\mathbf{a}\mathbf{b} \alpha ab J(z,h)
\\&+ \beta(\mathbf{a}^2-\mathbf{b}^2a_\ast^2)(2|h|_N-A_\ast J(z,h))A_*
\\&+2\mathbf{a}\mathbf{b} \beta a_\ast b_\ast.
\end{split}
\]

So
\[\begin{split}
\mathfrak{D}[h]
&\ge 2\left[(\mathbf{a}^2-\mathbf{b}^2 a^2)Am+(\mathbf{a}^2-\mathbf{b}^2a_\ast^2)A_\ast \beta\right]|h|_N
\\&+ \left[2\mathbf{b}^2 a \alpha-(\mathbf{a}^2-\mathbf{b}^2a_\ast^2)A^2_\ast \beta\right] J(z,h)
\\&+2\mathbf{b}^2 a_\ast \alpha -  (\mathbf{a}^2-\mathbf{b}^2 a^2)A^2 \beta.
\end{split}
\]

Equality hold at a given point if \begin{equation}\label{aas}A=|h|_N, \ \ \ \ A_\ast=\frac{1}{|h_T|}\end{equation}
and
\begin{equation}\label{aht} a|h_N|=  |h_T| \ \ \ \  a_\ast |h_N|=|h_T|.\end{equation}
Chose now $$a=a(s)=\frac{s\dot F(s)}{F(s)},$$ $$a_\ast=a_\ast(t)=\frac{H(t)}{t \dot H(t)},$$
 $$A=A(t,s)=\dot H(t)\sqrt{\frac{\mathbf{a}^2-\mathbf{b}^2a_\ast^2}{\mathbf{a}^2-\mathbf{b}^2a^2}}$$
and
  $$A_\ast=A_\ast(t,s)=\frac{F(s)}{s}\sqrt{\frac{\mathbf{a}^2-\mathbf{b}^2a^2}{\mathbf{a}^2-\mathbf{b}^2a_\ast^2}}.$$
Then $$\left[2\mathbf{b}^2 am-(\mathbf{a}^2-\mathbf{b}^2a_\ast^2)A^2_\ast \beta\right] J(z,h)=U(s) J(z,h)$$
and
$$2\mathbf{b}^2 a_\ast \beta -  (\mathbf{a}^2-\mathbf{b}^2 a^2)A^2\alpha=V(t)$$ so they are free Lagrangians.

So \begin{equation}\label{dell} \mathfrak{D}[h]\ge U(s) J(z,h)+V(t)+X\end{equation}
where $$X=2\left[(\mathbf{a}^2-\mathbf{b}^2 a^2)Am+(\mathbf{a}^2-\mathbf{b}^2a_\ast^2)A_\ast \beta\right]|h|_N.$$ It remains to estimate $X$ by means of free Lagranians. First of all $X$ is equal to
 $$2 \sqrt{\mathbf{a}^2-\mathbf{b}^2 a^2} \sqrt{\mathbf{a}^2-\mathbf{b}^2 a_\ast^2}\left(\alpha\dot H(t)+\beta\frac{F(s)}{s}\right)|h|_N.$$
Let $$\Gamma=\Gamma(t,s)=2t \sqrt{\mathbf{a}^2-\mathbf{b}^2 a^2} \sqrt{\mathbf{a}^2-\mathbf{b}^2 a_\ast^2}\left(\alpha\dot H(t)+\beta\frac{F(s)}{s}\right).$$
Let $$\mathcal{A}(t,s)=\int_1^s\Gamma(t,\tau)d\tau -\int_1^t \int_1^{H(\varsigma)}\Gamma_{\varsigma}(\varsigma,\tau)d\tau d\varsigma.$$
Hence $$\mathcal{A}_t(t,s)=\int_{H(t)}^s\Gamma_t(t,\tau)d\tau.$$
The aim is of to obtain the following lemma
\begin{lemma}\label{71}
Let $h\in \mathcal{F}_\circ(\mathbb{A}, \B)$. Then
\begin{equation}\label{gamast}\Gamma(t,s)\frac{|h|_N}{|z|}\ge \frac{\mathcal{A}_t(t,s)+\mathcal{A}_s(t,s)|h|_N}{|z|}\end{equation} where $s=|h(z)|$ and $t=|z|$. The equality is attained in \eqref{gamast} if and only if $h(z)=H(|z|)$, where $H$ satisfies the equation \eqref{hsecond}.
\end{lemma}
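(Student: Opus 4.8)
The plan is to strip \eqref{gamast} down to a one–variable sign condition and then read that sign off from the Euler--Lagrange equation \eqref{hsecond}. Directly from the definition of $\mathcal{A}$ one has $\mathcal{A}_s(t,s)=\Gamma(t,s)$, since the second (double) integral in $\mathcal{A}$ does not depend on $s$. Substituting $\mathcal{A}_s=\Gamma$ into the right-hand side of \eqref{gamast} and multiplying through by $|z|=t>0$, the two copies of $\Gamma(t,s)|h|_N$ cancel and \eqref{gamast} becomes equivalent to
\begin{equation*}
\mathcal{A}_t(t,s)\le 0 .
\end{equation*}
Moreover $\mathcal{A}_t(t,s)=\int_{H(t)}^{s}\Gamma_t(t,\tau)\,d\tau$, which vanishes at $s=H(t)$. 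Hence the whole lemma, equality clause included, follows once we show that for each fixed $t$ the map $s\mapsto\int_{H(t)}^{s}\Gamma_t(t,\tau)\,d\tau$ is maximised, with value $0$, exactly at $s=H(t)$.

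I would deduce this from the pointwise sign rule $\Gamma_t(t,\tau)\,(\tau-H(t))\le 0$, strict for $\tau\neq H(t)$: integrating it from $H(t)$ to $s$ gives $\mathcal{A}_t(t,s)<0$ for every $s\neq H(t)$ on either side of $H(t)$, with equality precisely when $s=H(t)$, i.e. when $|h(z)|=H(|z|)$. To see the sign, write
\begin{equation*}
\Gamma(t,\tau)=2t\sqrt{\mathbf{a}^2-\mathbf{b}^2 a(\tau)^2}\,\sqrt{\mathbf{a}^2-\mathbf{b}^2 a_\ast(t)^2}\,\Big(\alpha\dot H(t)+\beta\tfrac{F(\tau)}{\tau}\Big),
\end{equation*}
with $a(\tau)=\tau\dot F(\tau)/F(\tau)$ and $a_\ast(t)=H(t)/(t\dot H(t))$. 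Since $a(\tau)$ is free of $t$, the factor $\sqrt{\mathbf{a}^2-\mathbf{b}^2 a(\tau)^2}\ge 0$ survives the $t$–differentiation, giving $\Gamma_t(t,\tau)=2\sqrt{\mathbf{a}^2-\mathbf{b}^2 a(\tau)^2}\,\Psi(t,\tau)$ with
\begin{equation*}
\Psi(t,\tau)=(tQ)'\Big(\alpha\dot H+\beta\tfrac{F(\tau)}{\tau}\Big)+\alpha\,tQ\,\ddot H,\qquad Q=Q(t)=\sqrt{\mathbf{a}^2-\mathbf{b}^2 a_\ast(t)^2}.
\end{equation*}
Under the concavity hypothesis ($\mathbf{c}<1$ with $\mathbf{c}^2 t\dot H\ge H$, hence $a_\ast\le\mathbf{c}^2<\mathbf{c}$ and likewise $a(\tau)<\mathbf{c}$) both square roots are real and positive, so it remains to prove that $\Psi(t,\cdot)$ vanishes at $\tau=H(t)$ and is strictly decreasing in $\tau$.

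The vanishing at $\tau=H(t)$ is the decisive link to the radial extremal. At $\tau=H(t)$ one has $F(H(t))=t$ and $\dot F(H(t))=1/\dot H(t)$, so $a(H(t))=a_\ast(t)$ and $F(H(t))/H(t)=t/H(t)$; inserting these into $\Psi(t,H(t))=0$ and clearing denominators turns it into a second-order relation among $H,\dot H,\ddot H$ that coincides with the Euler--Lagrange equation \eqref{hsecond}. This is exactly why the coefficients $a,a_\ast,A,A_\ast$ and the potential $\mathcal{A}$ were chosen as they were: the subgradient inequality \eqref{split} was built to become an identity along the solution of \eqref{hsecond}. For the monotonicity, only the $\beta F(\tau)/\tau$ term of $\Psi$ depends on $\tau$, so $\partial_\tau\Psi=(tQ)'\cdot\beta\,\partial_\tau\big(F(\tau)/\tau\big)$ and $\partial_\tau\big(F(\tau)/\tau\big)=\frac{F(\tau)}{\tau^2}\big(a(\tau)-1\big)<0$ because $a(\tau)<\mathbf{c}<1$; thus $\Psi$ is strictly decreasing provided $(tQ)'>0$.

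The main obstacle is therefore purely computational and twofold. First, one must carry out the algebra identifying $\Psi(t,H(t))=0$ with \eqref{hsecond}; this is lengthy but forced by construction. Second, and more delicate, one must verify the sign $(tQ)'>0$, i.e. that $t\mapsto tQ(t)=\sqrt{\mathbf{a}^2 t^2-\mathbf{b}^2 H^2/\dot H^2}$ is increasing. Here the concavity assumption of Theorem~\ref{comtotal} must be used decisively, since
\begin{equation*}
\big((tQ)^2\big)'=2\mathbf{a}^2 t-2\mathbf{b}^2\,\frac{H(\dot H^2-H\ddot H)}{\dot H^3},
\end{equation*}
and after substituting $\ddot H$ from \eqref{hsecond} the terms carry no evident common sign; I expect to control it by bounding $\ddot H$ through $H\le\mathbf{c}^2 t\dot H$ together with $\mathbf{c}<1$. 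Once the sign rule for $\Gamma_t$ is secured, integration yields $\mathcal{A}_t\le 0$ and the equality case $s=H(t)$, i.e. $h$ radial with $H$ solving \eqref{hsecond}, completing the proof.
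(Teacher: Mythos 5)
Your skeleton is the paper's own: you note $\mathcal{A}_s=\Gamma$, reduce \eqref{gamast} to $\mathcal{A}_t(t,s)\le 0$, observe $\mathcal{A}_t(t,H(t))=0$ and $\partial_s\mathcal{A}_t=\Gamma_t$, and then try to show that $\Gamma_t(t,\tau)$ is positive for $\tau<H(t)$ and negative for $\tau>H(t)$. Up to that point everything is correct and identical in structure to the paper. The divergence is in how the sign of $\Gamma_t$ is obtained. The paper substitutes the ODE \eqref{hsecond} into $\Gamma_t$ and arrives at a single closed-form expression,
\begin{equation*}
\Gamma_t =\frac{2 \alpha \beta \left(\tfrac{F(s)}{s} -\tfrac{ t}{H(t)}\right) \sqrt{\mathbf{a}^2-\mathbf{b}^2 a^2}\, \sqrt{\mathbf{a}^2-\mathbf{b}^2 a_\ast^2}\, H'(t)^2 \left(\mathbf{a}^2 t H'(t)-\mathbf{b}^2 H(t)\right)}{ \beta \mathbf{b}^2 H(t)+\alpha \mathbf{a}^2 t H'(t)^3},
\end{equation*}
from which the vanishing at $s=H(t)$, the monotonicity in $s$ (via $\tfrac{d}{ds}\tfrac{F(s)}{s}<0$) and the overall sign (via the concavity condition $\mathbf{a}^2tH'\ge\mathbf{b}^2H$) are all read off at once. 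You instead factor $\Gamma_t=2\sqrt{\mathbf{a}^2-\mathbf{b}^2a^2}\,\Psi$ and need two sub-claims: that $\Psi(t,H(t))=0$ reproduces \eqref{hsecond} (true, and routine, since both are linear in $\ddot H$), and that $(tQ)'>0$ with $Q=\sqrt{\mathbf{a}^2-\mathbf{b}^2a_\ast^2}$. The second claim you only conjecture ("I expect to control it"), and since the monotonicity of $\Psi$ in $\tau$ --- hence the entire sign rule --- stands or falls with it, this is the one genuine gap in your write-up.

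The gap is closable, and by exactly the substitution you anticipate: inserting $\ddot H$ from \eqref{hsecond} into your expression for $\big((tQ)^2\big)'$ and simplifying with $\mathbf{b}^2\mathbf{c}^2=\mathbf{a}^2$ yields the factorization
\begin{equation*}
\big((tQ)^2\big)'=\frac{2\gamma\left(\mathbf{a}^2 tH'-\mathbf{b}^2H\right)\left(\mathbf{a}^2t^2H'^2-\mathbf{b}^2H^2\right)}{\mathbf{b}^2\,t\left(H+\mathbf{c}^2\gamma\, tH'^3\right)},
\end{equation*}
and under the concavity hypothesis $\mathbf{c}^2tH'\ge H$ (with $\mathbf{c}\le 1$) both parenthesized factors in the numerator are nonnegative, so $(tQ)'\ge 0$ as you need. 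Note that this is not an extra hypothesis beyond what the paper already uses: it is the same inequality $\mathbf{a}^2tH'-\mathbf{b}^2H\ge 0$ that governs the sign of the paper's closed form for $\Gamma_t$; your route merely isolates it in a separate lemma rather than absorbing it into one computation. With that factorization supplied, your argument is complete and equivalent to the paper's.
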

\begin{proof}
Observe first that $$\mathcal{A}_s(t,s)=\Gamma(t,s).$$ So we need only to show that $\mathcal{A}_t(t,s)\le 0$. First we have $\mathcal{A}_t(t, H(t))=0$. Further we have $$\partial_s\mathcal{A}_t(t, s)=\Gamma_t(t,s),$$
 having in mind the equation \eqref{hsecond} we obtain after straightforward but lengthly computations
$$\Gamma_t =\frac{2 \alpha \beta (\frac{F(s)}{s} -\frac{ t}{H(t)}) \sqrt{\mathbf{a}^2-\mathbf{b}^2 a^2} \sqrt{\mathbf{a}^2-\mathbf{b}^2 b^2} H'(t)^2 \left(\mathbf{a}^2 t H'(t)-\mathbf{b}^2 H(t)\right)}{ \left(\beta \mathbf{b}^2 H(t)+\alpha \mathbf{a}^2 t H'(t)^3\right)}.$$
Since $\frac{F(s)}{s}$ is decreasing because $$\frac{d}{ds}\frac{F(s)}{s}=F(s)\left(\frac{s F'(s)}{F(s)}-1\right)\le F(s)(\mathbf{c}-1)<0,$$ it follows that $$\frac{F(s)}{s}-\frac{ t}{H(t)}$$ is decreasing and is vanishing for $s=H(s)$. Combining with the fact that in concavity case we have
$$\frac{ t H'(t)}{H}\ge \frac{1}{\mathbf{c}^2}, \ \  1\le t \le r, $$ we conclude that
$$\Gamma_t \left\{
                   \begin{array}{ll}
                     >0, & \hbox{$1\le s<H(t)$ } \\
                     =0, & \hbox{$s=H(t)$} \\
                     <0, & \hbox{$H(t)<s<R$.}
                   \end{array}
                 \right.$$

Thus $\mathcal{A}_t(t, s)\le \mathcal{A}_t (t, H(t))=0$.

\end{proof}

Now we finish the proof of concavity case.

We have
\[\begin{split}
\int_{\A}\mathfrak{D}[h]dz&\ge\int_{\A}\left[  U(s) J(z,h)+V(t)+X\right] dz
\\& =\int_{\A}[  U(s) J(z,h)] dz+\int_{\A}[V(t)] dz+\int_{\A}X dz
\\&\ge\int_{\A}[  U(s) J(z,h_\circ)] dz+\int_{\A}[V(t)] dz+\int_0^{2\pi}\int_1^r \Gamma(t, |h(te^{i\theta}|))\frac{|h|_N}{|z|}dt d\theta
\\&\ge \int_{\A}[  U(s) J(z,h_\circ)] dz+\int_{\A}[V(t)] dz+\int_0^{2\pi}\int_1^r\frac{\partial\mathcal{A}(t, |h(te^{i\theta}|))}{\partial t} dt d\theta
\\&= \int_{\A}[  U(s) J(z,h_\circ)] dz+\int_{\A}[V(t)] dz+2\pi (\mathcal{A}(r, R)-\mathcal{A}(1, 1))
\\&=\int_{\A}\mathfrak{D}[h_\circ]dz.
\end{split}
\]

\subsection{Proof of convexity case:  $H''_q(t)\ge 0$ ($\mathbf{c}>1$)}

If  $F=H^{-1}$ then from \eqref{hsecond}  we obtain.
$$F''(\tau)=-\frac{F'(\tau)^2 \left(- F(\tau)+\mathbf{c}^{-2}\tau F'(\tau)\right) \left(\tau+\gamma F(\tau) F'(\tau)\right)}{\tau F(\tau) \left(\gamma  F(\tau)+\mathbf{c}^{-2}\tau{F}'(\tau)^3\right)}.$$ Since $\mathbf{c}>1$, we have $1/\mathbf{c}<1$, so this case reduces to the concavity case for the inverse mappings: $\mathcal{F}(\B, \A)$.

\subsection{Proof of uniqueness}
The proof is similar to the proof in \cite{arma}. We start with the conditions \eqref{247}, \eqref{248}, \eqref{249} and \eqref{250}. We have also that $$b=\frac{\mathbf{b}}{\mathbf{a}}=b_\ast.$$ Hence $|h_N | = A$ and $|h_T | = a_* |h_N | = a_*A$. It follows from \eqref{250} that $h_T \overline{h_N }\in i \R_+$. Combining and using Lemma~\ref{71} we obtain
\begin{equation}\label{270}
\begin{split}
J (z, h) &= \Im(h_T\overline{ h_N}) = h_T \overline{ h_N} = |h_T | |h_N | = a_*(t)A^2(t,\tau)
\\&= a_* (|z|) {A}^2 (|z|, |h(z)|) = a_* (|z|) {A}^2 (|z|, H(|z|)). \end{split}
\end{equation} Thus, all extremal deformations must satisfy
\begin{equation}\label{271}h \overline{ h_N}\in \R \end{equation}
\begin{equation}\label{272}i\overline{h} h_T \in  \R \end{equation}
\begin{equation}\label{273}J (z, h)\text{ is a function in }|z| .\end{equation}
Now we deduce what we need from the following simple proposition
\begin{proposition}\cite{arma}
Let $h : A \into
 \R^2\setminus\{0\}$ be a homeomorphism of Sobolev class
$W ^{1,1}(A, \R^2)$ that satisfies the conditions \eqref{271}, \eqref{272} and \eqref{273}. Then $h$ is radial.
\end{proposition}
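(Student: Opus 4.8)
The plan is to pass to polar coordinates on both domain and target: write $z = te^{i\theta}$ and $h = \rho e^{i\Theta}$, where $\rho = \rho(t,\theta) > 0$ and $\Theta = \Theta(t,\theta)$, with subscripts denoting partial derivatives in the polar variables $t,\theta$. Using $h_N = \partial_t h = (\rho_t + i\rho\Theta_t)e^{i\Theta}$ and $h_T = \frac{1}{t}\partial_\theta h = \frac{1}{t}(\rho_\theta + i\rho\Theta_\theta)e^{i\Theta}$, I would compute
$$h\overline{h_N} = \rho\rho_t - i\rho^2\Theta_t, \qquad i\overline{h}h_T = -\frac{\rho^2\Theta_\theta}{t} + i\frac{\rho\rho_\theta}{t}.$$
Since $\rho > 0$ throughout $A$ (the image avoids the origin), condition \eqref{271} forces the imaginary part $\rho^2\Theta_t$ to vanish, hence $\Theta_t = 0$; and condition \eqref{272} forces $\rho\rho_\theta/t$ to vanish, hence $\rho_\theta = 0$. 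Thus $\Theta$ depends on $\theta$ alone and $\rho$ depends on $t$ alone.

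Next I would invoke the Jacobian identity already established in the proof of Lemma~\ref{bel}, namely $J(z,h) = \frac{\rho}{t}(\rho_t\Theta_\theta - \rho_\theta\Theta_t)$. Substituting $\Theta_t = 0$ and $\rho_\theta = 0$ collapses this to
$$J(z,h) = \frac{\rho(t)\rho'(t)}{t}\,\Theta'(\theta).$$
Condition \eqref{273} says the left-hand side is a function of $t$ only, so writing it as $g(t)\Theta'(\theta) = j(t)$ with $g(t) = \rho(t)\rho'(t)/t$, I would argue that $g$ cannot vanish identically (otherwise $J \equiv 0$, contradicting that $h$ is an orientation-preserving homeomorphism, for which $J > 0$ almost everywhere); picking any $t_0$ with $g(t_0) \neq 0$ yields $\Theta'(\theta) \equiv j(t_0)/g(t_0)$, a constant. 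Hence $\Theta(\theta) = k\theta + c$, and since $h$ is a homeomorphism of the annulus keeping the boundary order, its restriction to each circle $|z| = t$ winds exactly once, forcing $k = 1$. Therefore $h(z) = \rho(t)e^{i(\theta + c)}$, which is radial up to the rotation $c$.

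The main obstacle is not the algebra but the regularity: $h$ lies only in $W^{1,1}$, so all the identities above hold merely almost everywhere, and the deduction ``$\Theta_t = 0$ a.e. implies $\Theta$ is independent of $t$'' requires the absolute continuity on lines property of Sobolev functions together with the fact that $h$, being a homeomorphism into $\R^2\setminus\{0\}$, keeps $\rho$ bounded away from $0$ on compact subsets. I would make the passage to polar coordinates rigorous by observing that $\log h = \log\rho + i\Theta$ is locally $W^{1,1}$ away from the origin, so $\rho$ and $\Theta$ admit representatives that are absolutely continuous along almost every radial and circular line; the a.e. vanishing of a partial then forces genuine independence of that variable for these representatives. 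The remaining care is to confirm $J \not\equiv 0$ and to legitimize the winding-number argument for a Sobolev homeomorphism, both of which follow from the orientation-preserving homeomorphism hypothesis.
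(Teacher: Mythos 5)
The paper offers no proof of this proposition---it is quoted verbatim from \cite{arma}---and your argument is essentially the standard one from that source: conditions \eqref{271} and \eqref{272} kill $\Theta_t$ and $\rho_\theta$ in the polar factorization $h=\rho e^{i\Theta}$, and \eqref{273} then forces $\Theta_\theta$ to be constant, equal to $1$ by injectivity of $h$ on circles, so the argument is correct. The one step I would tighten is the assertion that $J>0$ almost everywhere for a $W^{1,1}$ homeomorphism (planar Sobolev homeomorphisms with $J=0$ a.e.\ do exist): it is cleaner to observe that if $\rho'(t)=0$ a.e.\ then, by absolute continuity of $\rho$ on almost every radius, $\rho$ would be constant and $h(A)$ would lie on a single circle, contradicting injectivity---or simply to note that in the setting where the proposition is applied, finiteness of the distortion term $|Dh|^2/J$ already guarantees $J>0$ a.e.
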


We finish this paper by stating the following conjecture. Namely we believe that the convexity hypothesis is not essential in Theorem~\ref{comtotal}.

 \begin{conjecture}\label{conjecture}
The total combined energy integral $\mathfrak{E}: \mathcal{F}(\A,\B)\to \mathbf{R}$,  attains its minimum  for a radial mapping $h_\circ$, without assuming any  convexity hypothesis.
\end{conjecture}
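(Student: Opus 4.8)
The plan is to isolate precisely where the convexity hypothesis enters the proof of Theorem~\ref{comtotal} and then to remove it. Inspecting that proof, every ingredient — the existence of a diffeomorphic radial extremal $H$ solving \eqref{hsecond} (Lemma~\ref{hsol}), the subgradient inequalities \eqref{general} and \eqref{general1}, the choice of the free parameters $a,a_\ast,A,A_\ast$, and the reduction of the coefficient of $J(z,h)$ and of the constant term to the free Lagrangians $U(s)J(z,h)$ and $V(t)$ — is valid for every $\mathbf{c}$ and for any such $H$, regardless of the sign of $\mathbf{c}^2 tH'(t)-H(t)$. The hypothesis is used at a single point: in Lemma~\ref{71}, where concavity forces $\mathbf{c}^2 t H'(t)\ge H(t)$, hence fixes the sign of the factor $\mathbf{a}^2 t H'(t)-\mathbf{b}^2 H(t)$ in $\Gamma_t$, and therefore yields $\mathcal{A}_t(t,s)\le 0$. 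Thus the conjecture is equivalent to the single inequality $\int_{\A}\Gamma(|z|,|h|)\,\frac{|h|_N}{|z|}\,dz\ge 2\pi\big(\mathcal{A}(r,R)-\mathcal{A}(1,1)\big)$ holding for all competitors $h$, in the regime where $t\mapsto \mathbf{c}^2 t H'(t)-H(t)$ changes sign on $(1,r)$.

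My primary approach would be to repair the calibration by region splitting. Since the radial extremal is monotone and, by \eqref{hsecond}, $H''$ vanishes only where $\mathbf{c}^2 t H'(t)=H(t)$, let $t_\ast\in(1,r)$ be such a transition radius (there may be several) and split $\A$ into the concave subannulus $\{1\le|z|\le t_\ast\}$ and its complement. On the concave part I would keep the auxiliary function $\mathcal{A}(t,s)$ of the proof, for which $\mathcal{A}_t\le 0$. On the remaining part, where $H$ is convex, the inverse $F=H^{-1}$ satisfies $F''=-H''/(H')^3<0$ and is therefore concave on the image interval; there I would apply the concavity calibration to $h^{-1}$ rather than to $h$, which is exactly the device already used to settle the convexity case of Theorem~\ref{comtotal}. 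The two type-(e) free Lagrangians must then be glued across $|z|=t_\ast$ so that the resulting $2$-form remains a total derivative and still dominates $\Gamma\,\frac{|h|_N}{|z|}$ from below.

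An alternative, which sidesteps the sign bookkeeping entirely, is to argue by uniqueness of critical points. By \cite[Theorem~1.2]{arma} the polyconvex, conformally coercive integral $\mathfrak{E}$ attains its minimum at a diffeomorphism $h$; writing down the outer Euler--Lagrange system together with the inner variational (Hopf) equation for $\mathfrak{E}$, I would try to show that the latter forces the structural identities \eqref{271}, \eqref{272} and \eqref{273}, after which the Proposition quoted from \cite{arma} gives that $h$ is radial, hence equal to $h_\circ$ up to rotation. This route never invokes $\mathcal{A}_t\le 0$ and so is insensitive to convexity.

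The hard part is the same in both plans. In the region-splitting plan there is no a priori reason that the concavity calibration for $h$ and the concavity calibration for $h^{-1}$ agree to first order at $t_\ast$: if $\mathcal{A}$ and $\mathcal{A}_t$ do not match across $|z|=t_\ast$, the glued form acquires a singular interface contribution and ceases to be a free Lagrangian, or the underlying subgradient inequality fails in a neighbourhood of $t_\ast$. Controlling this matching — equivalently, producing a single globally $C^1$ auxiliary function $\mathcal{A}(t,s)$ with $\mathcal{A}_s=\Gamma$ and $\mathcal{A}_t\le 0$ throughout $(1,r)\times(1,R)$ even when $\Gamma_t$ changes sign — is the crux, and it is far from clear that such a function exists. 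In the inner-variation plan the difficulty migrates to passing from the weak conservation law supplied by the inner variation to the pointwise conditions \eqref{271}--\eqref{273}, without the extra rigidity that convexity previously furnished. These obstructions are exactly why the statement is recorded here only as a conjecture.
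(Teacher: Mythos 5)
This statement appears in the paper only as Conjecture~\ref{conjecture}; the paper offers no proof, and your submission does not close it either. Both of your routes stop exactly at the point you yourself flag as ``the hard part,'' so what you have is a correct diagnosis of where the convexity/concavity hypothesis enters the proof of Theorem~\ref{comtotal} (solely in Lemma~\ref{71}, through the sign of the factor $\mathbf{a}^2 tH'(t)-\mathbf{b}^2H(t)$ in $\Gamma_t$, which is what yields $\mathcal{A}_t(t,s)\le 0$) together with a research plan, not an argument. The region-splitting plan has a concrete structural obstruction beyond the $C^1$ matching you mention: the calibration used in the convexity case of Theorem~\ref{comtotal} is a free Lagrangian for the inverse map, i.e.\ a form whose integral over all of $\B$ is competitor-independent. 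A general competitor $h$ does not map the circle $|z|=t_\ast$ onto the circle $|w|=H(t_\ast)$, so $h(\{1\le|z|\le t_\ast\})$ is not a fixed subannulus of $\B$; restricting either calibration to one of the two pieces destroys precisely the property that makes the lower bound computable without knowing $h$. Producing instead a single function $\mathcal{A}$ on $[1,r]\times[1,R]$ with $\mathcal{A}_s=\Gamma$ and $\mathcal{A}_t\le 0$ even where $\Gamma_t$ changes sign is exactly the open problem, and nothing in your text constructs it.

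The inner-variation alternative is likewise not a proof as written. The existence result of \cite[Theorem~1.2]{arma} gives a minimizer and stationarity gives a weak conservation law, but the passage from that distributional identity to the pointwise conditions \eqref{271}, \eqref{272}, \eqref{273} is the entire missing content; in the cases the paper does prove, those conditions are extracted from the \emph{equality analysis} of the calibration inequality, which is unavailable once $\mathcal{A}_t\le 0$ fails. Without an independent rigidity argument this route is circular. In short: your localization of the difficulty is accurate and agrees with the reason the paper records the statement as a conjecture, but neither plan is carried out to the point of yielding the claimed minimality, so the statement remains unproved.
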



\begin{thebibliography}{99}


\bibitem{astala}
\textsc{K. Astala, T. Iwaniec and G. Martin}, \emph{Deformations of annuli with smallest mean distortion}, Arch. Ration. Mech. Anal. 195 (2010) 899-921.
\bibitem{ball} \textsc{J.M.~Ball,} Convexity conditions and existence theorems in nonlinear elasticity. Arch.
Ration. Mech. Anal. 63, 337--403 (1978)
\bibitem{mali}
\textsc{M. Cs\"ornyei, S. Hencl, J. Mal\'y:} \emph{Homeomorphisms in the Sobolev space $\W^{1,n-1}$,} J. Reine Angew. Math. 644 (2010), 221-235.
\bibitem{kc}
Sh. Chen, D. Kalaj, \emph{Total energy of radial mappings},  Nonlinar analysis, 167 (2018), 21-28.
\bibitem{heko}
{\sc S.  Hencl, P. Koskela:} \emph{Regularity of the inverse of a
planar Sobolev homeomorphism.} Arch. Ration. Mech. Anal. {\bf 180},
75-95 (2006).
%\bibitem{hartman}
%\textsc{Hartman, P.:} \emph{Ordinary Differential Equations.} Wiley, New York/London/Sydney, 1964.
\bibitem{nconj}
\textsc{ T. Iwaniec, L. V. Kovalev; J. Onninen:} \emph{The Nitsche conjecture.} J. Amer. Math. Soc. 24 (2011), no. 2, 345-373.

\bibitem{annalen}
\textsc{T. Iwaniec, J. Onninen},  \emph{Neohookean deformations of annuli, existence, uniqueness and radial symmetry.} Math. Ann. 348 (2010), no. 1, 35-55.

\bibitem{memoirs}
\textsc{T. Iwaniec, J. Onninen:}   \emph{$n$-harmonic mappings between annuli: the art of integrating free Lagrangians.} Mem. Amer. Math. Soc. 218 (2012), no. 1023, viii+105 pp.
\bibitem{invent}
\textsc{T. Iwaniec,  N.-T. Koh, L.V. Kovalev, J. Onninen:} \emph{Existence of energy-minimal diffeomorphisms between doubly connected domains.} Invent. Math. 186(3), 667--707 (2011)
\bibitem{arma} \textsc{T. Iwaniec; J. Onninen}, \emph{Hyperelastic deformations of smallest total energy.} Arch. Ration. Mech. Anal. 194 (2009), no. 3, 927-986.
\bibitem{calculus}
\textsc{D.  Kalaj:} \emph{Energy-minimal diffeomorphisms between doubly connected Riemann surfaces.} Calc. Var. Partial Differential Equations 51 (2014), no. 1-2, 465--494.

\bibitem{klondon}
\textsc{D.  Kalaj:}
\emph{Deformations of annuli on Riemann surfaces and the generalization of Nitsche conjecture.}
J. Lond. Math. Soc. (2) 93 (2016), no. 3, 683-702.
\bibitem{Ka}
\textsc{D. Kalaj: } {\em On the Nitsche conjecture for harmonic mappings in $\R^2$ and $\R^3$.} Israel J. Math. {\bf 150} (2005), 241--251.
\bibitem{Ka1}
\textsc{D. Kalaj:} \emph{On J. C. C. Nitsche type inequality for annuli on riemann surfaces}, Israel J. Math. {\bf 218} (2017), 67--281.
\bibitem{arxivk}

\textsc{D. Kalaj}, \emph{$n$-harmonic energy minimal deformations between annuli,} arXiv:1703.06639.
\bibitem{isrg}
\textsc{D. Kalaj:} \emph{On J. C. C. Nitsche type inequality for annuli on Riemann surfaces,} Isr. J. Math. 218, 67-81 (2017),

\bibitem{jost}

\textsc{J. Jost, X. Li-Jost,} \emph{Calculus of variations.} Cambridge Studies in Advanced Mathematics, 64. Cambridge University Press, Cambridge, 1998.

\bibitem{L}
\textsc{A. Lyzzaik:} {\em The modulus of the image annuli under univalent harmonic mappings and a conjecture of J.C.C. Nitsche},  J. London Math. Soc., {\bf 64}, (2001), 369--384.
%\bibitem{trans} D. Kalaj, J.-F. Zhu, \emph{Quasiconformal Harmonic mappings and the curvature of the boundary.} J. Math. Anal. Appl. 446 (2017), no. 2, 1154�1166.
\bibitem{markovic}
\textsc{V. Markovi\'c}:
\emph{Harmonic Maps and the Schoen Conjecture}, to appear in Journal of American Mathematical Society.
\bibitem{Nitsche}
\textsc{J.C.C. Nitsche:} {\em On the modulus of doubly connected regions under harmonic mappings},  Amer. Math. Monthly,  {\bf 69}, (1962), 781--782.
 \bibitem{W} \textsc{A. Weitsman:} {\em Univalent harmonic mappings of annuli and a conjecture of J.C.C. Nitsche}, Israel J. Math., {\bf 124},  (2001),  327--331.
\end{thebibliography}
\end{document}